\documentclass{birkjour}

\usepackage{url}
\usepackage{bm}
\usepackage{caption}
\allowdisplaybreaks
 \newtheorem{theorem}{Theorem}[section]
 \newtheorem{corollary}[theorem]{Corollary}
 \newtheorem{lemma}[theorem]{Lemma}
 \newtheorem{proposition}[theorem]{Proposition}
 \theoremstyle{definition}
 
 \theoremstyle{remark}
 \newtheorem{remark}[theorem]{Remark}
 
 \numberwithin{equation}{section}
 
 \newcommand{\R}{\mathbb{R}}
\newcommand{\N}{\mathbb{N}}

\newcommand{\ra}{\rangle}
\newcommand{\la}{\langle}
\newcommand{\bnu}{{\bm \nu}}
\newcommand{\btau}{{\bm \tau}}
\newcommand{\bv}{{\bm v}}
\newcommand{\bkappa}{{\bm \kappa}}
\newcommand{\gm}{\gamma}
\DeclareMathOperator*{\esssup}{ess\,sup}

\begin{document}

%
%
%
%
%
%
%
%
%

\title[ACF for open curves with general contact angles on skew lines]
 {Area-preserving curvature flow\\ for open curves with general contact angles\\ on skew lines}

\author[Fuya Hiroi]{Fuya Hiroi}

\address{%
Graduate School of Mathematical Sciences\\
The University of Tokyo\\
3-8-1 Komaba, Meguro-ku\\
Tokyo 153-8914\\
Japan}

\email{fuya-hiroi@g.ecc.u-tokyo.ac.jp}

\thanks{The work was supported by JST SPRING Grant Number JPMJSP2114. The author was partially supported by KAKENHI Grant Number 22H05105.}
\subjclass{Primary 53C44; Secondary 58J35}

\keywords{area-preserving curvature flow, general contact angles, free boundary}

\date{October 1, 2026}

\begin{abstract}
We study an area-preserving curvature flow for planar open curves whose end points lie on skew lines and meet the lines at general contact angles. In previous studies of moving boundary problems for higher-order geometric gradient flows, the right-angle condition has often been imposed to ensure that the evolving curves naturally satisfy the boundary conditions. However, under general contact-angle conditions, the evolution must be described by a free boundary problem. Under suitable assumptions on the contact angles $\theta_\pm$, the angle $\theta$ between the skew lines, and the initial data $\gamma_0$, we prove global-in-time existence and uniqueness up to reparametrization. We further prove that, after choosing a constant-speed parametrization at each time, the solution converges exponentially to a circular arc with the same signed area as the initial curve. Our analysis is based on a formulation of the evolution as a free boundary problem together with the derivation of uniform a priori estimates.
\end{abstract}

\maketitle
\section{Introduction}\label{sec1}

In this paper, we consider the area-preserving curvature flow for planar open curves $\gamma : [-1, 1] \times [0, T) \to \R^2$ whose end points lie on two skew lines and meet them at prescribed contact angles. For $m \in \N \cup \{ 0 \}$, the $m$-th area-preserving curvature flow is defined by
  \begin{align}\label{eq:ACF}\tag{$m$-ACF}
     \partial_t \gamma &= (-1)^m \partial_s^{2m} (k - \bar{k}) \bnu,
  \end{align}
where $s$, $k$, $\bar{k}: = \frac{1}{L[\gamma]}\int_\gamma k \, ds$, and $\bnu$ denote, respectively, the arc length parameter of $\gamma$, the curvature of $\gamma$, the integral mean of $k$ with respect to $s$, and the unit inner normal vector defined by rotating the unit tangential vector $\btau := \partial_s \gamma$. We use the Frenet--Serret formulas:
  \begin{align}\label{FS}
    \partial_s^2 \gamma = \partial_s \btau = k \bnu, \qquad \partial_s \bnu = -k \btau.
  \end{align}
For simplicity, we define 
  \[F_m[k] := (-1)^m \partial_s^{2m}(k - \bar{k}). \]
When $\gamma$ is a closed curve, \eqref{eq:ACF} can be regarded as the $H^{-m}$-gradient flow for the length defined by
  \[L[\gamma] := \int_{-1}^1 | \partial_u \gamma (u) | \, du = \int_\gamma \, ds, \]
while preserving the signed area defined by
  \[A[\gamma] := - \frac{1}{2} \int_\gamma \la \gamma, \bnu \ra \, ds,\]
where $\la \cdot, \cdot \ra: \R^2 \times \R^2 \to \R$ denotes the Euclidean inner product.
For $m \geq 2$, \eqref{eq:ACF} appears as polyharmonic heat flow in Parkins--Wheeler~\cite{PW_2016}. More recently, Miura~\cite{M_2024} studied immortal solutions of \eqref{eq:ACF} in the setting of closed curves.
We focus on the moving boundary problem for \eqref{eq:ACF} in the setting of planar open curves with end points on skew lines. For $\alpha \in [0, 2\pi)$, define the line
  \begin{align*}
    \eta_\alpha := \left\{ \lambda \bv_\alpha \middle| \lambda \in \R \right\},
  \end{align*}
where ${\bm v}_\alpha := (\cos \alpha, \sin \alpha)$ and we consider open curves $\gamma$ satisfying $\gamma (-1, t) \in \eta_0$ and $\gamma (1, t) \in \eta_\theta$. We note that each end point remains on its designated line. In this sense, we refer to those two lines $\eta_0$ and $\eta_\theta$ as skew lines. Here we also note that $A[\gamma]$ is well defined for open curves with the end points on skew lines. Indeed, $A[\gamma]$ denotes the signed area enclosed by the curve and the line segments joining its end points to the origin. Furthermore, when the curve is parametrized in the counterclockwise direction, we define $\theta_-$ as the angle between $\btau (-1, t)$ and ${\bm v}_0$, and $\theta_+$ as the angle between $\btau (1, t)$ and ${\bm v}_\theta$ (see Fig. \ref{fig1}).
\begin{figure}[htbp]
  \centering
  \includegraphics[width=0.42\textwidth]{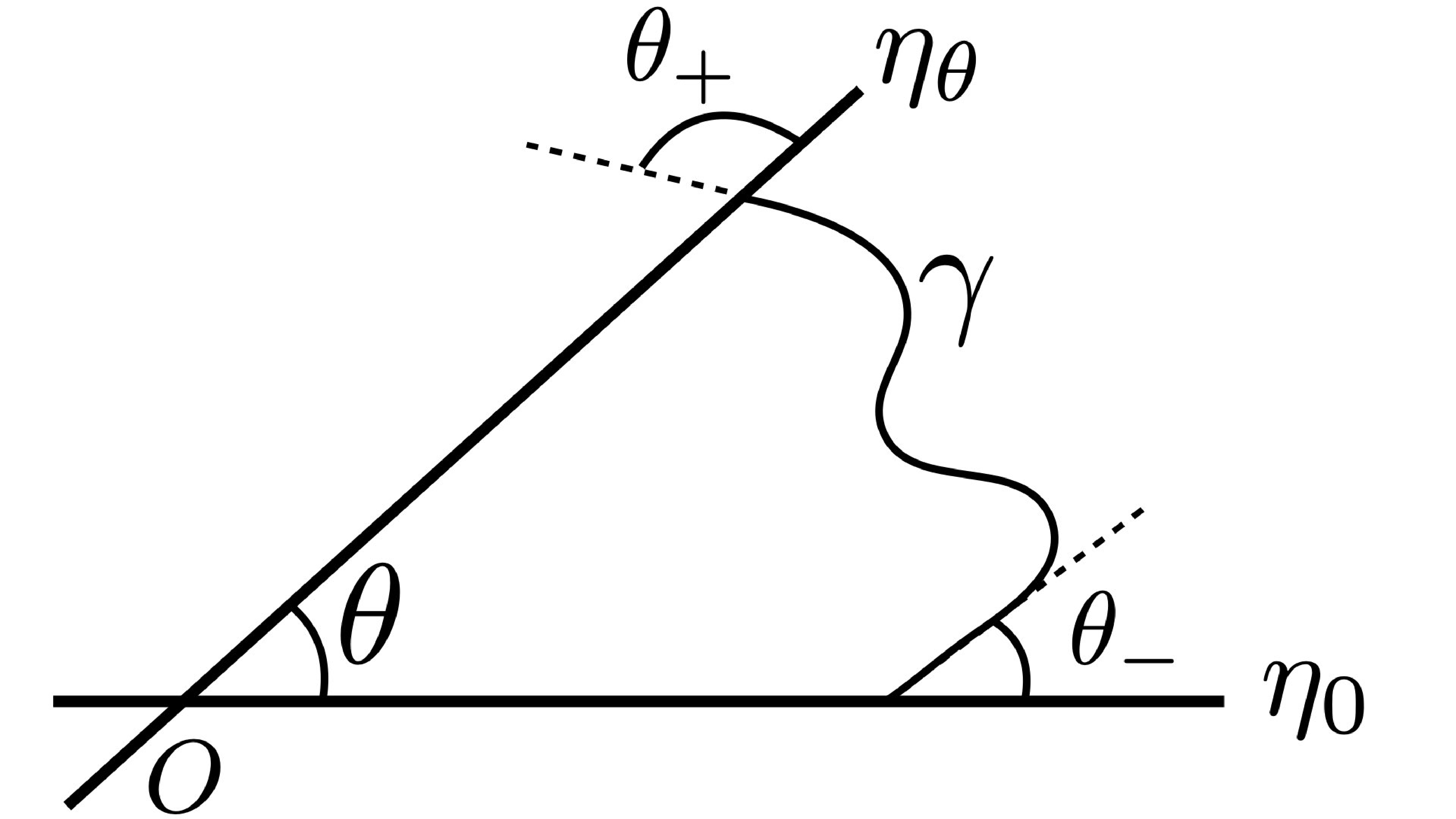}
  \caption{Open curves with the end points on skew lines}
  \label{fig1}
\end{figure}

\noindent
In the following, a curve $\gamma : [-1, 1] \to \R^2$ is called regular if $\gamma (u)$ satisfies 
\[
\gamma \in C^1 ([-1, 1]; \R^2), \quad \text{and} \quad 
| \gamma' (u)| > 0 \quad \text{for all} \quad u \in [-1, 1]. 
\]
%

Recently, the framework of open curves with end points on skew lines has emerged as an effective setting for studying the asymptotic behavior of area-preserving geometric gradient flows. A representative example is the area-preserving elastic flow, namely, the $H^{-1}$-gradient flow of the bending energy. 
Elastic flows with either a length constraint or a length penalization have been extensively studied for both closed and open curves; see, e.g.,
\cite{BHV_2022, DKS_2002, DLP_2017, DP_2014, KM_2024, K_1992, L_2012, L_1991, MPP_2021, MP_2021, M_2025, MMR_2025, MR_2026, NO_2014, NO_2017, O_2008, O_2007, OPW_2020, OS_2025, OW_2023, P_1996, P_2022, RS_2024, W_1995, W_2015}
and the references therein.
By contrast, the elastic flow subject only to an area constraint is substantially more delicate. Okabe~\cite{O_2007} studied the elastic flow under an area constraint together with an inextensibility condition, whereas Langer~\cite{L_2025} introduced the area-preserving elastic flow as the $H^{-1}$-gradient flow of the bending energy. For closed curves, Langer~\cite{L_2025} proved the global-in-time existence of smooth solutions and established compactness when an additional length penalization is imposed. In the absence of such a penalization, however, compactness for the area-preserving elastic flow of closed curves remains an open problem.
The skew-line setting provides a new perspective on this problem. In our joint work with Okabe~\cite{HO_2026}, we studied the area-preserving elastic flow for open curves whose end points lie on skew lines and satisfy the right-angle condition. Under suitable assumptions on the angle $\theta$ between the two lines and on the initial curve, we established the compactness of the flow without imposing either a length constraint or a length penalization. This result demonstrates that appropriate boundary conditions can provide sufficient geometric control to ensure compactness. Accordingly, the skew-line setting has emerged as a natural framework for studying the long-time behavior of area-preserving geometric gradient flows.
Another representative example is the area-preserving curvature flow \eqref{eq:ACF}. For $m=1$, this flow reduces to the curve diffusion flow. The curve diffusion flow has attracted considerable attention as a fundamental higher-order geometric evolution equation that may develop singularities. In particular, its long-time behavior and compactness for open curves have been investigated by adapting ideas and estimates developed for closed curves; see, e.g., \cite{GM_2025, HO_2025, WW_2024}. In our previous work with Okabe~\cite{HO_2025}, we established the compactness of the curve diffusion flow for open curves with end points on skew lines satisfying the right-angle condition under suitable assumptions on $\theta$  and on the initial curve.

These results naturally raise two questions: whether the right-angle condition can be relaxed and whether the analytical framework can be extended to more general higher-order geometric gradient flows. To answer these questions, we take the area-preserving curvature flow \eqref{eq:ACF} as a model problem and develop an analytical framework under general contact-angle conditions, with the aim of extending it to a broader class of higher-order geometric gradient flows. Because its analytical structure is simpler than that of the area-preserving elastic flow, \eqref{eq:ACF} provides a natural model for this purpose. However, relaxing the right-angle condition fundamentally changes the boundary behavior of the flow. Under the right-angle condition, the normal direction of the curve at each end point is parallel to the corresponding line. Hence, the prescribed normal velocity automatically keeps the end points on the lines. For general contact angles, by contrast, motion by the prescribed normal velocity alone would generally move the end points away from the lines. Therefore, an appropriate tangential component of the velocity is required at the end points to ensure that they remain on the lines throughout the evolution. This leads us to formulate the following free boundary problem:

  \begin{align}\label{P}\tag{P}
    \begin{cases}
      \langle \partial_t \gamma (u, t), {\bm \nu} \rangle = F_m[k] (u, t) \quad &\text{in} \quad (-1, 1) \times (0, T),\\
      \gamma(-1, t) \in \eta_0, \quad \gamma(1, t) \in \eta_\theta, \quad &\text{in} \quad (0, T),\\
      {\bm \tau}(-1, t) = { \bm v }_{\theta_-}, \quad {\bm \tau}(1, t) = {\bm v}_{\theta + \theta_+}, \quad &\text{in} \quad (0, T),\\
        \partial^{2j-1}_s k (\pm1, t) =0 \quad \text{for} \quad j \in \{ 1, 2, 3, \dots , m \} \quad &\text{in} \quad (0, T) \quad \text{if} \quad m \geq 1,\\
      \gamma(u, 0) = \gamma_0 (u) \quad &\text{in} \quad (-1, 1). 
    \end{cases}
  \end{align}
Unlike the right-angle case, \eqref{P} is no longer the gradient flow of $L[\gamma]$, but rather the $H^{-m}$-gradient flow of 
\[ 
E[\gamma]  := L[\gamma] - \langle \gamma (1), {\bm v}_{\theta + \theta_+} \rangle + \langle \gamma (-1), {\bm v}_{\theta_-} \rangle 
\]
under general contact-angle conditions. The main goal of this paper is to establish global-in-time solvability of \eqref{P} and study the asymptotic behavior of its solutions. 
%
Before stating our main result, we mention the initial conditions. We assume that $\gamma_0 : [-1,1] \to \R^2$ is a smooth regular curve and satisfies
\begin{align}\label{ACF:BC}
    \begin{cases}
        \gamma_0(-1) \in \eta_0, \quad \gamma_0(1) \in \eta_\theta,\\
        {\bm \tau}_0(-1) = { \bm v }_{\theta_-}, \quad {\bm \tau}_0(1) = {\bm v}_{\theta + \theta_+},\\
        \partial^{2j-1}_s k_0 (\pm1) = 0 \quad \text{for}  \quad j \in \{ 1, 2, 3, \dots , m \} \quad \text{if} \quad m \geq 1,  
     \end{cases}
\end{align}   
as compatibility conditions, where $\btau_0$ and $k_0$ denote the unit tangential vector and the curvature of $\gamma_0$. 
For  $m = 0, 1$, the flow \eqref{eq:ACF} for open curves with prescribed contact angles on a straight line was considered by Shimojo--Kagaya~\cite{SK_2016}, Kagaya~\cite{K_2020} and Kagaya--Kohsaka~\cite{KK_2020}, who proved the existence of self-similar solutions that move like traveling waves. 
In \cite{SK_2016} and \cite{K_2020}, the local-in-time solvability of the free boundary problem was proved under the following assumption:
 \begin{align}\label{A1}
     L[\gamma_0] \in (0, \infty), \quad |\partial_u \gamma_0| = \frac{L[\gamma_0]}{2} \quad \text{for all} \quad u \in [-1, 1].
 \end{align} 
In Section \ref{loc}, we prove the local-in-time solvability of the problem \eqref{MP} by essentially the same argument as in \cite{SK_2016} under the assumption \eqref{A1}.
To study the asymptotic behavior of the solutions, we define the following notation$:$
    \begin{align*}
    \mathcal{E}_0 [\gamma] &:= E [\gamma] \int_\gamma (k - \bar{k})^2 \, ds, \quad 
    \mathcal{E}_1 [\gamma] := E [\gamma]^3 \int_\gamma ( \partial_s k)^2 \, ds.
    \end{align*} 
We note that $\mathcal{E}_0 [\gamma]$ and $\mathcal{E}_1[\gamma]$ are scale-invariant quantities. As long as $E[\gamma] > 0$ holds, $\mathcal{E}_0 [\gamma]$ and $\mathcal{E}_1[\gamma]$ measure how close the curvature of $\gamma$ is to a constant. We define $\mathcal{D}$ by
  \begin{align*}
    \mathcal{D} := \left\{ (\sigma, \alpha, \beta) \,\, \middle| \,\,
      \begin{aligned}
       \sigma \in (0, \pi), \,\,\,
       \alpha, \beta \in (0, 2 \pi) \setminus \{ \pi \}, \,\,\,
        | \sigma + \beta - \alpha | < \pi, \,\,\,
        \mu (\sigma, \alpha, \beta) < 1
      \end{aligned}
     \right\},
  \end{align*}
where $\mu (\sigma, \alpha, \beta)$ is defined by
\[
\mu (\sigma, \alpha, \beta) := \sqrt{\frac{\cos^2 \alpha + \cos^2 \beta - 2 \cos \sigma \cos \alpha \cos \beta}{\sin^2 \sigma}}.
\]
For brevity, we write $\mu:=\mu(\theta,\theta_-,\theta_+)$. Here we assume $(\theta, \theta_-, \theta_+) \in \mathcal{D}$. For every admissible curve satisfying the prescribed end point and contact-angle conditions, the condition $\mu < 1$ ensures the positivity of $E[\gamma]$. Indeed, as shown in Lemma~\ref{lem:LleqE}, we obtain
\[
E[\gamma] \geq (1-\mu)L[\gamma].
\]
Thus, the condition $\mu<1$ implies both the positivity of $E[\gamma]$ and the estimate
\[
L[\gamma]\leq\frac{E[\gamma]}{1-\mu} \leq \frac{E[\gamma_0]}{1-\mu}.
\]
Since $E[\gamma]$ is nonincreasing along the flow, this provides a uniform upper bound for $L[\gamma]$. This estimate replaces the monotonicity of $L[\gamma]$, which is generally unavailable because $\eqref{P}$ is not the gradient flow of $L[\gamma]$ when the contact angles are not right angles. Together with the scale-invariant quantities $\mathcal E_0[\gamma]$ and $\mathcal E_1[\gamma]$, it forms the basis of our a priori estimates. The geometric implications of the conditions defining $\mathcal{D}$, particularly for stationary solutions, are described in Corollary~\ref{cor:k>0arc}.
In addition, we impose the following assumption on the initial area $A[\gamma_0]$ and rotation number $\omega [\gamma_0] := \int_{\gamma_0} k_0 \, ds$:
  \begin{align}\label{A2}
   \begin{aligned}
    A[\gamma_0] &\in (0, \infty), \quad \omega[\gamma_0] = \theta + \theta_+ - \theta_-.
   \end{aligned}
   \end{align}
The main result of this paper is stated as follows:
 \begin{theorem}\label{shukekka}
 Let $m \in \N \cup \{ 0 \}$ and $(\theta, \theta_-, \theta_+) \in \mathcal{D}$. Then\, there exists $K^* > 0$ such that the following holds$:$
 Assume $\gamma_0$ is a smooth regular curve satisfying \eqref{ACF:BC}, \eqref{A1}, \eqref{A2} and
   \begin{align}\label{A3}
     \mathcal{E}_0 [\gamma_0] < K^* \quad \text{if} \quad m = 0, \qquad \mathcal{E}_1 [\gamma_0] < K^* \quad \text{if} \quad m \geq 1.
   \end{align}
Then \eqref{P} admits a smooth global-in-time solution $\gamma : (-1, 1) \times [0, \infty) \to \R^2$, which is unique up to time-dependent orientation-preserving reparametrizations fixing the end points. Moreover, there exists a smooth family of orientation-preserving diffeomorphisms $\psi(\cdot, t): [-1,1] \to [-1,1]$ satisfying $\psi(\pm1, t) = \pm1$ and $\psi (u, 0) = u$ such that the reparametrized curve $\gamma(\psi(\cdot,t),t)$ is parametrized with constant speed and converges exponentially in the $C^\infty$-topology, as $t \to \infty$, to the circular arc $\gamma_\infty (\cdot)$ having rotation number $\omega[\gamma_\infty] = \theta + \theta_+ - \theta_-$ and area $A[\gamma_\infty] = A[\gamma_0]$.
  \end{theorem}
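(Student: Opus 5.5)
The plan follows the standard scheme for higher-order geometric flows: local existence, uniform a priori estimates under the smallness assumption \eqref{A3}, continuation to a global solution, and then exponential convergence.

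\textbf{Step 1: local solvability.} First I reformulate \eqref{P} as a parabolic system for a parametrization with an explicit tangential velocity, which is the problem \eqref{MP} of Section~\ref{loc}. The tangential velocity is chosen to keep the constant-speed normalization \eqref{A1}, so that $|\partial_u\gamma| = L[\gamma]/2$, and to keep the end points on $\eta_0$ and $\eta_\theta$. Following \cite{SK_2016}, I write $\gamma$ as a graph-type perturbation of $\gamma_0$ and linearize. The leading operator is $(-1)^m\partial_s^{2m+2}$, with boundary conditions of Lopatinskii--Shapiro type: the angle conditions together with $\partial_s^{2j-1}k=0$. Solonnikov's theory and a contraction argument then give a unique smooth local solution. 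Uniqueness up to reparametrization follows because any solution of \eqref{P} can be reparametrized to solve \eqref{MP}.

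\textbf{Step 2: conserved and monotone quantities.} Along the flow I derive:
\begin{itemize}
\item $\frac{d}{dt}A = -\int_\gamma F_m\,ds = 0$, since the integral of $\partial_s^{2m}(k-\bar k)$ vanishes for $m\ge1$ by the Neumann-type conditions, and trivially for $m=0$;
\item $\omega[\gamma]$ is fixed by the angle conditions;
\item $\frac{d}{dt}E = -\int_\gamma (\partial_s^m (k-\bar k))^2\,ds \le 0$. Integrating by parts here, the boundary terms coming from the tangential motion of the end points cancel exactly because of the contact-angle terms in $E$.
\end{itemize}
Lemma~\ref{lem:LleqE} then bounds $L[\gamma]$ above by $E[\gamma_0]/(1-\mu)$. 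For a lower bound on $L$, I use the isoperimetric-type control $A[\gamma]\le C L[\gamma]^2$ together with $A[\gamma]=A[\gamma_0]>0$.

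\textbf{Step 3: a priori estimates (the main obstacle).} I show that $\mathcal{E}_0$ (for $m=0$), respectively $\mathcal{E}_1$ (for $m\ge1$), stays below a threshold for all time, and then bootstrap to bounds on $\|\partial_s^l k\|_{L^2}$ for every $l$.
\begin{itemize}
\item I compute $\frac{d}{dt}\int(\partial_s^{l}k)^2\,ds$ using the evolution $\partial_t k = \partial_s^2F_m + F_m k^2 + \varphi\,\partial_s k$, where $\varphi$ is the tangential speed.
\item The boundary terms vanish or are controlled by the odd-derivative conditions. I expect the hardest part to be the boundary terms involving the tangential velocity at the end points, which are new compared with the right-angle case. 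Here I would use that the end-point velocity is determined by $F_m(\pm1)$ divided by $\sin\theta_\pm$; this is why $\theta_\pm\ne\pi$ is required.
\item The lower-order terms are interpolated with Gagliardo--Nirenberg inequalities for $k-\bar k$ on an interval, whose mean-zero or Neumann structure gives Poincar\'e and Wirtinger inequalities.
\item For $m\ge1$ this yields $\frac{d}{dt}\mathcal E_1 \le -c(1-C\mathcal E_1^{\,p})\int(\partial_s^{m+1}k)^2\,ds$ for some $p>0$. By \eqref{A3}, once $K^*$ is chosen small, a continuity argument keeps $\mathcal E_1$ small.
\item The bounds on $L$ from Step~2 convert smallness of the scale-invariant quantity into uniform bounds. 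Higher derivatives then follow by induction with Gronwall.
\end{itemize}
Combined with Step~1, the bound on $L$, the lower bound on $L$, and the unit-speed control, these estimates let me continue the solution past any finite time, so $T=\infty$.

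\textbf{Step 4: convergence.} The dissipation $\int(\partial_s^m(k-\bar k))^2$ is integrable in time, and together with the uniform bounds this gives $k-\bar k\to0$ in every $C^l$. The differential inequality from Step~3 together with the Poincar\'e inequality gives $\frac{d}{dt}\mathcal E\le -c\,\mathcal E$, so the decay is exponential.

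To identify the limit, I reparametrize by constant speed using $\psi$, which is built by solving the ODE of the tangential correction. Exponential decay of $\partial_t\gamma$ then implies $\gamma$ converges in $C^\infty$ to a curve of constant curvature meeting the end-point and angle conditions. This limit has $\omega=\theta+\theta_+-\theta_-$ and area $A[\gamma_0]$. By Corollary~\ref{cor:k>0arc}, the limit is the unique circular arc with these data.
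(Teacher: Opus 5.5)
Your overall architecture matches the paper's. It uses the constant-speed reformulation \eqref{MP}, monotonicity of $E$ and conservation of $A$ and $\omega$, the bound $L\le E[\gamma_0]/(1-\mu)$ from Lemma~\ref{lem:LleqE}, a continuity argument for $\mathcal E_0$ or $\mathcal E_1$, a bootstrap, exponential decay, Corollary~\ref{cor:k>0arc} to identify the limit, and uniqueness via the constant-speed reparametrization. Two of your deviations are harmless. Your position/graph local theory is heavier than the paper's scalar angle problem \eqref{prob:Thetaztau}, which removes the tangential degeneracy and the moving end points. Your lower bound on $L$ via an elementary estimate $A[\gamma]\le C(\theta)L[\gamma]^2$ is a valid shortcut for Lemma~\ref{lem:Lbdd}, since $|\gamma(\pm1)|\le L[\gamma]/\sqrt{1-|\cos\theta|}$.

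\textbf{The gap: end-point terms in Step~3.} This is exactly where you expect the difficulty. Integrating the leading term $\int_\gamma\partial_s^l(k-\bar k)\,\partial_s^{l+2m+2}k\,ds$ by parts leaves end-point terms that the prescribed conditions $\partial_s^{2j-1}k=0$, $j\le m$, do not remove. These are $[2(k-\bar k)\partial_sk]$ for $m=0$, and $[\partial_s^2k\,\partial_s^{2m+1}k]$ for $m\ge1$, $l=1$. The paper removes them by asserting $\partial_s^{2l+1}k(\pm1,t)=0$ for all $l$ (Lemmas~\ref{lem:pm1commutator} and~\ref{lem:ks2l+1}), but you should not rely on this. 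Since $\btau(\pm1,t)$ is constant in $t$, \eqref{eq:pttau} gives $\partial_sF_m[k]+kG_m[k]=0$ at $u=\pm1$; this is what the proof of Lemma~\ref{lem:rot-invariant} uses. By \eqref{eq:alphapm1} this becomes
\[
\partial_sF_m[k]=k\,F_m[k]\cot\theta_\pm\quad\text{at } u=\pm1,
\]
that is, $\partial_s^{2m+1}k=k\,\partial_s^{2m}(k-\bar k)\cot\theta_\pm$ there. This does not vanish in general when $\theta_\pm\neq\pi/2$. The proof of Lemma~\ref{lem:pm1commutator} obtains $\partial_sF_m[k](\pm1,t)=0$ by differentiating the end-point identity \eqref{eq:pm1Gamma} with respect to $s$, which is not legitimate.

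\textbf{Why these terms cannot be absorbed by smallness.} The tangential motion really does create new boundary terms, and they are not small. For $m=0$, with $f:=k-\bar k$, the total end-point contribution to $\frac{d}{dt}\int_\gamma f^2\,ds$ is $\bigl[2f\partial_sf+f^2G_0[k]\bigr]_{s=0}^{s=L[\gamma]}$. This equals $(2\bar k+f)f^2\cot\theta_+$ at $s=L[\gamma]$ minus $(2\bar k+f)f^2\cot\theta_-$ at $s=0$. For $m\ge1$ the analogous term contains $\bar k\cot\theta_\pm\,\partial_s^2k\,\partial_s^{2m}k$. These quadratic pieces scale exactly like the dissipation and are not made small by \eqref{A3}. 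The same holds for the interior term $2\bar k^2\|f\|_2^2$ (resp.\ $2\bar k^2\|\partial_s^{m+1}k\|_2^2$). The paper absorbs that term by Poincar\'e using $|\omega|<\pi$, a hypothesis your Step~3 never invokes. Hence an inequality of the form $\frac{d}{dt}\mathcal E_1\le-c\,(1-C\mathcal E_1^{\,p})\,\|\partial_s^{m+2}k\|_2^2$ cannot follow from smallness alone. (Note also that the dissipation involves $\partial_s^{m+2}k$, not $\partial_s^{m+1}k$.) You must prove coercivity of the quadratic form built from the dissipation and these $\bar k$-terms.

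\textbf{What would be needed.} A crude absorption via $\|f\|_\infty^2\le\frac{2L[\gamma]}{\pi}\|\partial_sf\|_2^2$ requires, for $m=0$, a condition such as $1-\frac{\omega^2}{\pi^2}-\frac{2|\omega|}{\pi}\bigl(|\cot\theta_+|+|\cot\theta_-|\bigr)>0$. This does not follow from $(\theta,\theta_-,\theta_+)\in\mathcal D$: take $\theta=\pi/4$, $\theta_-\downarrow0$ and $\cos\theta_+=\cos\theta\cos\theta_-$, which gives $\mu=\cos\theta_-<1$, $\omega\to\pi/2$ and $|\cot\theta_-|\to\infty$. Otherwise you need a sign-sensitive or spectral argument for the linearization at the arc, whose end-point conditions are of Robin type. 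Until this is done, the continuity argument, and with it global existence and exponential convergence, is not established. The higher-order bootstrap must also handle analogous lower-order end-point terms, since odd derivatives of $k$ of order at least $2m+1$ do not vanish at $u=\pm1$ in general.
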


Since the geometric evolution problem $\eqref{P}$ prescribes only the normal component of the velocity, uniqueness is understood up to time-dependent orientation-preserving reparametrizations fixing the end points. To analyze $\eqref{P}$, we introduce the modified problem $\eqref{MP}$, in which a suitable tangential velocity is prescribed to preserve the constant-speed parametrization; see Section \ref{sec:set}. The results for $\eqref{P}$ are then obtained from those for $\eqref{MP}$ by a time-dependent reparametrization. 

The rest of this paper is organized as follows. Section 2 introduces the notation and collects auxiliary inequalities and basic evolution identities. Section 3 formulates the suitably modified problem \eqref{MP} and establishes its fundamental properties. Section 4 proves the local well-posedness, global-in-time existence, and convergence of solutions to \eqref{MP}. Finally, Section \ref{sec:pfofshukekka} completes the proof of Theorem~\ref{shukekka}. 


\section{Preliminaries}\label{sec:Pre}

In Section \ref{sec:Pre}, we collect the notation, basic lemmas, and inequalities used throughout this paper.
Let $\gm: [-1, 1] \times [0, T) \to \R^2$ be a family of smooth regular curves. 
The arc length parameter $s$ of $\gm$ is defined by 
\[
s=s(u,t):= \int^u_{-1} |\partial_{\tilde{u}} \gm(\tilde{u},t)| \, d\tilde{u}. 
\]
We denote the integral of the function $f$ defined on $\gm$ with respect to the arc length parameter $s$ of $\gm$ by  
\[
\int_\gm f \, ds := \int^{L[\gm]}_0 f(s) \, ds = \int^1_{-1} f(u) |\partial_u \gm| \, du. 
\]
Moreover, we define the $L^p$-norm with respect to the arc length parameter of $\gm$: 
\begin{align*}
\| f \|_p &:= \Bigl( \int_\gm |f|^p \, ds \Bigr)^\frac{1}{p} \quad &\text{for} \quad 1 \le p < \infty,\\
\| f \|_\infty &:= \esssup_{s \in [0, L[\gamma]]} |f (s)| = \esssup_{u \in [-1, 1]} |f(u)| \quad &\text{for} \quad p = \infty.
\end{align*}
Using it, for all $r, q \in \N$, we define the $W_\gamma^{r, q}$-norm by
\[
\| f \|_{W^{r,q}_\gm} := \sum^{r}_{i=0} \|\partial^i_s f \|_{L^q_\gm},
\] 
where
\[
\| \partial^i_s f \|_{L^q_\gm} := L[\gm]^{i+1-\tfrac{1}{q}}  \| \partial^i_s f \|_q. 
\]
In the following, we collect the inequalities used in this paper:
\begin{lemma}[\cite{WW_2024}]\label{lem:Poincare1}
Let $f : (0, L) \to \R$ satisfy $f \in H^1(0,L)$ and $\int_0^L f \, ds = 0$. Then
\begin{align*}
\int_0^L f^2 \, ds \le \frac{L^2}{\pi^2} \int_0^L f_s^2 \, ds, \quad
\| f \|_\infty^2 \le \frac{2 L}{\pi} \int_0^L f_s^2 \, ds.
\end{align*}
\end{lemma}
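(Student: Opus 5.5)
The statement is Lemma~\ref{lem:Poincare1}: a sharp Poincar\'e--Wirtinger inequality together with an $L^\infty$ bound for mean-zero $H^1$ functions on $(0,L)$. I would rescale to $L=\pi$ via $f(s)=g(\pi s/L)$. Both inequalities are scale-consistent: $\int f^2$ scales like $L$ and $\int f_s^2$ like $L^{-1}$, which accounts for the factors $L^2/\pi^2$ and $L/\pi$. So it suffices to treat $L=\pi$.

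For the first inequality I would expand $g\in H^1(0,\pi)$ in the cosine basis $\{1,\cos(ns)\}_{n\ge1}$. This is the Neumann eigenbasis and is complete in $L^2(0,\pi)$. The mean-zero assumption kills the constant mode, so $g=\sum_{n\ge1}a_n\cos(ns)$.

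The key step is to justify that the derivative is represented by the sine series $g'=-\sum n a_n\sin(ns)$. This needs no boundary conditions on $g$. Indeed, the sine coefficients of $g'$ are $\frac{2}{\pi}\int_0^\pi g'\sin(ns)\,ds$. Integrating by parts, the boundary term $g\sin(ns)\big|_0^\pi$ vanishes, so these coefficients equal $-n a_n$. Since $\{\sin(ns)\}$ is also complete, Parseval gives
\[
\int_0^\pi g'^2\,ds=\frac{\pi}{2}\sum_{n\ge1} n^2a_n^2\ \ge\ \frac{\pi}{2}\sum_{n\ge1} a_n^2=\int_0^\pi g^2\,ds,
\]
which is the first inequality.

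For the $L^\infty$ bound I would use the same expansion. By Cauchy--Schwarz,
\[
|g(s)|\le\sum_{n\ge1} |a_n| \le \Bigl(\sum_{n\ge1} n^2a_n^2\Bigr)^{1/2}\Bigl(\sum_{n\ge1} n^{-2}\Bigr)^{1/2}.
\]
With $\sum n^{-2}=\pi^2/6$ this yields
\[
\|g\|_\infty^2\le\frac{2}{\pi}\cdot\frac{\pi^2}{6}\int_0^\pi g'^2\,ds=\frac{\pi}{3}\int_0^\pi g'^2\,ds,
\]
which is better than the claimed $\frac{2}{\pi}\int g'^2$ because $\pi/3\approx1.05<2/\pi\cdot\pi=2$. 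Undoing the scaling gives $\|f\|_\infty^2\le\frac{L}{3}\int f_s^2\le\frac{2L}{\pi}\int f_s^2$.

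As an independent check, there is a crude alternative for the $L^\infty$ bound. Since $f$ has zero mean, it vanishes at some $s_0$. Then $f^2(s)=2\int_{s_0}^s ff_s$, so $\|f\|_\infty^2\le 2\|f\|_2\|f_s\|_2\le\frac{2L}{\pi}\|f_s\|_2^2$ by the first inequality. This reproduces exactly the stated constant, and I would present this route since it is elementary.

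The main technical point I expect is the justification of the termwise differentiation of the series, that is, the integration by parts for an $H^1$ function against $\sin(ns)$. I would handle it by density of smooth functions in $H^1$ together with continuity of the trace.
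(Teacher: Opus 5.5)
The paper does not prove this lemma; it imports it from \cite{WW_2024}, so there is no in-paper argument to compare against. Your proof is correct. The cosine-series argument for the first inequality is complete. The integration by parts against $\sin(ns)$ is immediate, since $H^1$ functions on an interval are absolutely continuous, so you do not need a density argument.

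Both of your routes to the sup bound work, and they buy different things. The elementary route produces exactly the stated constant $2L/\pi$: choose $s_0$ with $f(s_0)=0$, write $f(s)^2=2\int_{s_0}^s ff_s$, and apply the first inequality. This is presumably where the constant comes from. It is also the same mechanism that gives the constant $L/\pi$ in Lemma~\ref{lem:Poincare2}, where one integrates from both vanishing ends.

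Your Fourier route gives more: $L/3$ is in fact the optimal constant. Equality holds at $s=0$ for the mean-zero function $f(s)=(s-L)^2-L^2/3$, which after rescaling is $\sum_{n\ge1}n^{-2}\cos(nx)$, exactly the extremizer of your Cauchy--Schwarz step.

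If you write up that route, fix two small points. First, the pointwise bound $|g(s)|\le\sum_{n\ge1}|a_n|$ needs a remark: your Cauchy--Schwarz estimate makes the cosine series absolutely and uniformly convergent, so it converges everywhere to the continuous representative of $g$. Second, ``the claimed $\frac{2}{\pi}\int g'^2$'' should read $2\int_0^\pi g'^2\,ds$, the value of $\frac{2L}{\pi}\int g'^2$ at $L=\pi$.
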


\begin{lemma}[\cite{WW_2024}]\label{lem:Poincare2}
Let $f : (0, L) \to \R$ satisfy $f \in H^1(0,L)$ and $f(0) = f(L) = 0$. Then 
\begin{align*}
\int_0^L f^2 \, ds \le \frac{L^2}{\pi^2} \int_0^L f_s^2 \, ds, \quad
\| f \|_\infty^2 \le \frac{L}{\pi} \int_0^L f_s^2 \, ds.
\end{align*}
\end{lemma}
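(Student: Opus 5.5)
The plan is to prove both inequalities directly. The $L^2$ bound comes from a Fourier sine expansion, and the $L^\infty$ bound from a two-sided Cauchy--Schwarz argument; the $L^\infty$ bound obtained this way is in fact stronger than the one claimed. Throughout, I use that every $f \in H^1(0,L)$ has an absolutely continuous representative with $f(s) - f(a) = \int_a^s f_s \, d\tilde s$, so the boundary values $f(0)=f(L)=0$ make sense pointwise.

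For the first inequality, I would use the fact that $\{\sqrt{2/L}\,\sin(n\pi s/L)\}_{n\ge1}$ is an orthonormal basis of $L^2(0,L)$. Write $f = \sum_{n\ge1} b_n \sin(n\pi s/L)$ in $L^2$. Because $f(0)=f(L)=0$, integrating by parts shows that $f_s$ has the cosine expansion $f_s = \sum_{n\ge1} b_n (n\pi/L)\cos(n\pi s/L)$, with no constant term. The boundary terms $[f\cos(n\pi s/L)]_0^L$ vanish precisely because $f$ vanishes at both ends, and the constant coefficient of $f_s$ is $\frac1L\int_0^L f_s\,ds = \frac{f(L)-f(0)}{L} = 0$. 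Parseval then gives
\[
\int_0^L f^2\,ds=\frac L2\sum_{n\ge1} b_n^2
\le \frac L2\sum_{n\ge1}\Bigl(\frac{n\pi}{L}\Bigr)^2\frac{L^2}{\pi^2} b_n^2=\frac{L^2}{\pi^2}\int_0^L f_s^2\,ds .
\]

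For the sup bound, fix $s\in(0,L)$. Writing $f(s)=\int_0^s f_s$ and $f(s)=-\int_s^L f_s$, the Cauchy--Schwarz inequality yields
\[
f(s)^2\le s\int_0^s f_s^2, \qquad f(s)^2\le (L-s)\int_s^L f_s^2 .
\]
I divide the first inequality by $s$ and the second by $L-s$, then add them to obtain $f(s)^2\bigl(\tfrac1s+\tfrac1{L-s}\bigr)\le \int_0^L f_s^2$. Since $\tfrac1s+\tfrac1{L-s}=\tfrac{L}{s(L-s)}$, this gives
\[
f(s)^2\le \frac{s(L-s)}{L}\int_0^L f_s^2\,ds \le \frac L4\int_0^L f_s^2\,ds .
\]
Finally, $\tfrac L4\le \tfrac L\pi$ because $\pi<4$, which yields the stated bound.

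I expect no genuine obstacle. The only step needing care is justifying the termwise differentiation of the sine series, which reduces to the integration by parts described above, and this is where both boundary conditions are used. Alternatively, one can prove the first inequality for smooth $f$ vanishing at the endpoints and conclude by density in $H^1_0(0,L)$.
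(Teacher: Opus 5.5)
Your proof is correct as written. The cosine coefficients of $f_s$ come directly from integrating by parts against $\cos(n\pi s/L)$, so no termwise differentiation is actually needed. The two-sided Cauchy--Schwarz step is valid for the absolutely continuous representative.

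The paper gives no proof of this lemma; it is simply quoted from the cited reference. So there is no internal argument to match, and yours is a complete, self-contained substitute.

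Your route does differ from the one the stated constants point to. The constant $L/\pi$ is what one gets by deriving the sup bound from the $L^2$ bound. One writes $2f(s)^2=\int_0^s (f^2)_s\,d\tilde s-\int_s^L (f^2)_s\,d\tilde s\le 2\|f\|_2\|f_s\|_2$ and then applies the first inequality. The same computation, started from a single zero of a mean-zero $f$, gives the constant $2L/\pi$ of Lemma \ref{lem:Poincare1}. This is why the two lemmas appear as a matched pair with one uniform proof.

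Your argument does not use the Wirtinger inequality at all. It gives the pointwise bound $f(s)^2\le \frac{s(L-s)}{L}\|f_s\|_2^2$, hence the sharp constant $L/4$, with equality for $s\mapsto\min\{s,L-s\}$. This is strictly better than $L/\pi$. The trade-off is that your argument does not transfer to the mean-zero setting as cleanly, whereas the interpolation route handles both cases at once. Since your bound implies the stated one, nothing downstream in the paper changes.
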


We use the following interpolation inequality (see, e.g., \cite[Appendix C]{DP_2014}). 
\begin{proposition} \label{prop:interpolation}
Let $\gm : [-1, 1] \to \R^2$ be a smooth regular curve. 
Then for all $m \in \N$, $p \ge 2$, $0 \le j < m$, and any $f$ with $\| f \|_{W^{m,2}_\gm} < \infty$, we have 
\[
\| \partial^j_s f \|_{L^p_\gm} \le C \| f \|_{L^2_\gm}^{1-\alpha} \| f \|_{W^{m,2}_\gm}^\alpha
\]
with $C=C(j, m, p) > 0$ and 
\[
\alpha = \dfrac{1}{m} \Bigl( j + \dfrac{1}{2} - \dfrac{1}{p} \Bigr).  
\]
\end{proposition}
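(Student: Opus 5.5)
The statement is scale-invariant, so I plan to reduce to a fixed interval: substitute $f(s)=g(s/L)$ and note that $\|\partial^i_s f\|_{L^q_\gm}=L^{i+1-1/q}\|\partial_s^i f\|_q$ is exactly the $L^q(0,1)$ norm of $g^{(i)}$. Indeed, $\partial_s^i f(s)=L^{-i}g^{(i)}(s/L)$ and $ds=L\,d\sigma$, so
\[
L^{i+1-1/q}\Bigl(\int_0^L|\partial_s^i f|^q\,ds\Bigr)^{1/q}=L\,\|g^{(i)}\|_{L^q(0,1)}.
\]
The factor $L$ appears on both sides of the desired inequality to total power one, since $(1-\alpha)+\alpha=1$, and therefore cancels. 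This is why the weighted norm $W^{r,q}_\gm$ was chosen as it is. Since everything is expressed through the arc-length parameter, the geometry of $\gm$ plays no role beyond the value of $L[\gm]$, and the claim reduces to the classical Gagliardo--Nirenberg inequality on $(0,1)$:
\[
\|g^{(j)}\|_{L^p(0,1)}\le C\,\|g\|_{L^2(0,1)}^{1-\alpha}\,\|g\|_{H^m(0,1)}^{\alpha},\qquad \alpha=\tfrac1m\bigl(j+\tfrac12-\tfrac1p\bigr).
\]

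To prove this one-dimensional inequality, I would first take $p=2$. By the standard Ehrling/interpolation estimate on a bounded interval, $\|g^{(j)}\|_2\le C\|g\|_2^{1-j/m}\|g\|_{H^m}^{j/m}$. One way to get it is from the case $m=2$, $j=1$: use an integration-by-parts bound that absorbs the boundary terms using $\|g\|_{H^2}$, for instance via an extension operator to $\R$ and Fourier analysis. The general case then follows by induction (log-convexity). For $p>2$, I would use the one-dimensional Sobolev/Agmon bound $\|h\|_\infty\le C\|h\|_2^{1/2}\|h\|_{H^1}^{1/2}$, applied to $h=g^{(j)}$. Combining it with Hölder's inequality, $\|h\|_p\le\|h\|_2^{2/p}\|h\|_\infty^{1-2/p}$, and with the $p=2$ interpolation for $\|g^{(j)}\|_2$ and $\|g^{(j+1)}\|_2$ (which requires $j+1\le m$, guaranteed by $j<m$), then collecting exponents, gives exactly $\alpha=\frac1m(j+\frac12-\frac1p)$.

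The one step needing care is the bounded-interval issue. Without boundary conditions, pure Gagliardo--Nirenberg on $(0,1)$ needs a lower-order term. Here this is harmless because $\|g\|_{H^m}$ already contains $\|g\|_{L^2}$, so one may always use $\|g\|_2\le\|g\|_{H^m}$ to absorb it. I would handle this by extending $g$ to $\R$ with a bounded extension operator $E$ satisfying $\|Eg\|_{H^k(\R)}\le C\|g\|_{H^k(0,1)}$ for $0\le k\le m$, and then applying the whole-line inequality.

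In practice, since this is a standard result, I would simply cite \cite[Appendix C]{DP_2014} after explaining the scaling reduction above. The only real content is verifying that the $L$-powers match, which they do by the choice of weights.
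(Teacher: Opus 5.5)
Your proposal is correct: the identity $L^{i+1-1/q}\|\partial_s^i f\|_q = L\,\|g^{(i)}\|_{L^q(0,1)}$ makes both sides scale by exactly one power of $L$, which reduces the claim to the inhomogeneous Gagliardo--Nirenberg inequality on $(0,1)$, and your Agmon--H\"older exponent bookkeeping does give $\alpha=\frac{1}{m}\bigl(j+\frac12-\frac1p\bigr)$. The paper gives no argument of its own and simply cites \cite[Appendix C]{DP_2014}, so your rescaling-to-unit-length proof is essentially the standard argument behind that citation, written out.
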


Applying Proposition~\ref{prop:interpolation}, we obtain the following interpolation inequality for $f$ satisfying $\int_\gm f \, ds = 0$ and $\partial_s^{2l -1} f (0)$ = $\partial_s^{2l -1} f (L[\gamma]) = 0$ for every $l \in \N$ satisfying $l \leq \frac{m + 1}{2}$.
\begin{lemma} \label{lem:k-interpolation}
Let $\gm : [-1, 1] \to \R^2$ be a smooth regular curve. 
Then for all $m \in \N$, $p \ge 2$, $0 \le j < m$, and any $f$ satisfying $\| f \|_{W^{m,2}_\gm} < \infty$ and 
\begin{align}\label{eq:k-interpolation}
\int_\gamma f \, ds &= 0 \quad \text{and} \quad
\partial_s^{2l - 1} f (0) = \partial_s^{2l - 1} f (L[\gamma]) = 0, 
\end{align}
for every $l \in \N$ satisfying $l \leq \frac{m + 1}{2}$, we have 
\[
\| \partial^j_s f \|_p \le M \| f \|_2^{1-\alpha} \| \partial^m_s f \|_2^\alpha
\]
with $M=M(j, m, p)>0$ and 
\[
\alpha = \dfrac{1}{m} \Bigl( j + \dfrac{1}{2} - \dfrac{1}{p} \Bigr).  
\]
\end{lemma}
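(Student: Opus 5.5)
The plan is to derive Lemma~\ref{lem:k-interpolation} from Proposition~\ref{prop:interpolation}. The only task is to control the full norm $\| f \|_{W^{m,2}_\gm}$ by the two endpoint quantities $\| f\|_{L^2_\gm}$ and $\|\partial_s^m f\|_{L^2_\gm}$, using the mean-zero and odd-derivative boundary conditions \eqref{eq:k-interpolation}. Then the scalings in $L[\gm]$ must be checked to cancel. Write $L = L[\gm]$. By definition $\|\partial_s^i f\|_{L^2_\gm} = L^{i+1/2}\|\partial_s^i f\|_2$ and $\|\partial_s^j f\|_{L^p_\gm} = L^{j+1-1/p}\|\partial_s^j f\|_p$.

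\textbf{Step 1 (Poincaré chain).} I will show that for every $1\le i\le m$,
\[
\|\partial_s^{i-1} f\|_2 \le \frac{L}{\pi}\|\partial_s^i f\|_2 .
\]
For $i=1$ this is Lemma~\ref{lem:Poincare1}, since $\int_\gm f\,ds=0$. For even $i-1$ with $i-1\ge 2$, the derivative $\partial_s^{i-2}f$ is odd order and vanishes at both endpoints, provided $i-2 = 2l-1$ with $l\le \frac{m+1}{2}$. This holds because $i-2\le m-1$, so $2l \le m$. Hence $\partial_s^{i-1} f$ has zero mean, by the fundamental theorem of calculus, and Lemma~\ref{lem:Poincare1} applies. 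For odd $i-1=2l-1$ with $l\le \frac{m+1}{2}$ (ensured by $i-1\le m$), $\partial_s^{i-1}f$ vanishes at $0$ and $L$, and Lemma~\ref{lem:Poincare2} applies. Iterating gives $L^{i+1/2}\|\partial_s^i f\|_2 \le \pi^{i-m} L^{m+1/2}\|\partial_s^m f\|_2$ for $1\le i\le m$. Therefore
\[
\|f\|_{W^{m,2}_\gm} \le \|f\|_{L^2_\gm} + C(m) L^{m+1/2}\|\partial_s^m f\|_2 \le C(m)\, L^{m+1/2}\|\partial_s^m f\|_2,
\]
where the last inequality uses the chain once more to absorb the $i=0$ term.

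\textbf{Step 2 (insert into Proposition~\ref{prop:interpolation}).} This gives
\[
L^{j+1-1/p}\|\partial_s^j f\|_p \le C\, L^{(1-\alpha)/2}\|f\|_2^{1-\alpha}\, L^{\alpha(m+1/2)}\|\partial_s^m f\|_2^{\alpha}.
\]
The exponent of $L$ on the right is $\tfrac12 + \alpha m = \tfrac12 + j + \tfrac12 - \tfrac1p = j+1-\tfrac1p$, using $\alpha m = j+\tfrac12-\tfrac1p$. It matches the left side, so dividing by it yields the claim with $M = C(j,m,p)C(m)^\alpha$.

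\textbf{Main obstacle.} The only delicate point is the index bookkeeping in Step 1. One must verify that the hypothesis $l\le\frac{m+1}{2}$ supplies exactly the boundary vanishing needed at each level $i\le m$, that is, mean zero for even-order derivatives and endpoint zeros for odd-order ones. The scaling check in Step 2 is routine.
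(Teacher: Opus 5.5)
Your proof is correct and follows essentially the same route as the paper: apply Proposition~\ref{prop:interpolation}, reduce each $\|\partial_s^i f\|_2$ to $(L[\gamma]/\pi)^{m-i}\|\partial_s^m f\|_2$ via Lemmas~\ref{lem:Poincare1} and~\ref{lem:Poincare2}, and check that the powers of $L[\gamma]$ cancel. The differences are cosmetic. You bound $\|f\|_{W^{m,2}_\gamma}$ as a whole before raising it to the power $\alpha$, whereas the paper distributes $\alpha$ over the sum. You also spell out the parity bookkeeping (mean zero for even orders, endpoint zeros for odd orders) that the paper leaves implicit.
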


\begin{proof}
Applying Proposition~\ref{prop:interpolation} to $f$, we find $C(j,m,p)>0$ such that  
\[
\| \partial^j_s f \|_{L^p_\gm} \le C \| f \|_{L^2_\gm}^{1-\alpha} \| f \|_{W^{m,2}_\gm}^\alpha.
\]
Thus we have
\[
\| \partial^j_s f \|_p 
 \le C L[\gm]^{-j+\frac{1}{p}-\frac{1}{2}} \| f \|_2^{1-\alpha} \sum^m_{i=0} L[\gm]^{i \alpha} \| \partial^i_s f \|_2^\alpha.
\]
Since $f$ satisfies \eqref{eq:k-interpolation}, it follows from Lemmas \ref{lem:Poincare1} and \ref{lem:Poincare2} that 
\[
\| \partial^i_s f \|_2 \le \Bigl[ \dfrac{L[\gm]}{\pi} \Bigr]^{m-i} \| \partial^m_s f \|_2
\]
for each $i \in \{0, 1, \ldots, m-1\}$. Thus we obtain   
\begin{equation*}
\begin{aligned}
\| \partial^j_s f \|_p 
 &\le C (m+1) L[\gm]^{-j+\frac{1}{p}-\frac{1}{2}+m \alpha} \| f \|_2^{1-\alpha} \| \partial^m_s f \|_2^\alpha \\
 &= C(m+1) \| f \|_2^{1-\alpha} \| \partial^m_s f \|_2^\alpha. 
\end{aligned}
\end{equation*}
\end{proof}


In what follows, we collect the fundamental properties of the smooth solution to the following general geometric evolution equation for open curves:
  \begin{align}\tag{GE}\label{GF}
    \partial_t \gamma = F \bnu + G \btau \quad \text{in} \quad (-1, 1) \times (0, T),
  \end{align}
where $F : [-1, 1] \times (0, T) \to \R$ and $G : [-1, 1] \times (0, T) \to \R$ denote the normal velocity of $\gamma$ and the tangential velocity of $\gamma$, respectively. We assume that the solution to \eqref{GF} satisfies the following$\colon$
  \begin{align}\label{BCofGF}
    \begin{cases}
      \gamma(-1, t) \in \eta_0, \quad \gamma (1, t) \in \eta_\theta, \quad &\text{in} \quad (0, T),\\
       \btau (-1, t) = {\bm v}_{\theta_-}, \quad  \btau (1, t) = {\bm v}_{\theta + \theta_+}, \quad &\text{in} \quad (0, T),\\
       \gamma (u, 0) = \gamma_0 (u) \quad &\text{in} \quad (-1, 1).
    \end{cases}
   \end{align}
We assume that $\gamma_0 : (-1, 1) \to \R^2$ satisfies
  \begin{align*}
   \begin{cases}
    \gamma_0(-1) \in \eta_0, \quad \gamma_0 (1) \in \eta_\theta,\\
       \btau_0 (-1) = {\bm v}_{\theta_-}, \quad  \btau_0 (1) = {\bm v}_{\theta + \theta_+},\\
     \end{cases}
  \end{align*}
as compatibility conditions. Here, we prove the following commutator rule for the solution to \eqref{GF}$\colon$
\begin{lemma}\label{lem:commutator}
Let $\gamma : (-1, 1) \times [0, T) \to \R^2$ be a smooth solution to \eqref{GF} satisfying \eqref{BCofGF}. Then we have$\colon$ 
  \begin{align}
    \partial_t ds &= (- k F + \partial_s G) ds, \label{eq:commutator1}\\
    \partial_t \partial_s - \partial_s \partial_t &= ( k F - \partial_s G) \partial_s. \label{eq:commutator2}
      \end{align}
\end{lemma}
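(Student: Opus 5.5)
The plan is a direct computation in the parameter $u$, using $ds = |\partial_u \gamma|\,du$ and $\partial_s = |\partial_u\gamma|^{-1}\partial_u$. Since $u$ and $t$ are independent variables, $\partial_t$ and $\partial_u$ commute. This fact is the only structural input; the boundary conditions \eqref{BCofGF} play no role, because both identities are pointwise in $(-1,1)\times(0,T)$.

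For \eqref{eq:commutator1}, I differentiate $|\partial_u\gamma|^2$ in time:
\[
\partial_t |\partial_u\gamma| = \frac{\la \partial_u\gamma, \partial_u\partial_t\gamma\ra}{|\partial_u\gamma|} = |\partial_u\gamma|\,\la \btau, \partial_s(F\bnu + G\btau)\ra .
\]
Expanding with the Frenet--Serret formulas \eqref{FS} gives
\[
\partial_s(F\bnu+G\btau) = (\partial_s F + kG)\bnu + (\partial_s G - kF)\btau .
\]
Taking the $\btau$-component yields $\partial_t|\partial_u\gamma| = (-kF+\partial_s G)|\partial_u\gamma|$, which is \eqref{eq:commutator1} once we interpret $ds = |\partial_u\gamma|\,du$.

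For \eqref{eq:commutator2}, I apply both sides to a smooth function $f(u,t)$:
\[
\partial_t\partial_s f = \partial_t\bigl(|\partial_u\gamma|^{-1}\partial_u f\bigr) = -\frac{\partial_t|\partial_u\gamma|}{|\partial_u\gamma|^2}\partial_u f + |\partial_u\gamma|^{-1}\partial_u\partial_t f .
\]
The last term equals $\partial_s\partial_t f$. By the first step, the first term equals $(kF - \partial_s G)\partial_s f$, and this gives the claim.

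There is no real obstacle here; the only care needed is consistent sign conventions. The normal $\bnu$ must be the rotation of $\btau$ with $\partial_s\btau = k\bnu$ and $\partial_s\bnu = -k\btau$, exactly as in \eqref{FS}. The solution must also be smooth enough that the mixed derivatives $\partial_t\partial_u\gamma = \partial_u\partial_t\gamma$ commute, which holds since $\gamma$ is assumed smooth.
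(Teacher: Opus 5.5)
Your proposal is correct and follows essentially the same route as the paper. It differentiates $|\partial_u\gamma|$ in time, extracts the $\btau$-component of $\partial_s(F\bnu+G\btau)$ via the Frenet--Serret formulas, and then expands $\partial_t\bigl(|\partial_u\gamma|^{-1}\partial_u\bigr)$ to obtain the commutator, with the boundary conditions playing no role, just as in the paper's argument.
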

\begin{proof}
From the Frenet--Serret formula \eqref{FS} and \eqref{GF}, we have
  \begin{align}\label{eq:commutator3}
   \begin{aligned}
    \partial_t |\partial_u \gamma| 
      = \frac{\la \partial_u \gamma, \partial_u \partial_t \gamma \ra}{|\partial_u \gamma|}
      = \la \btau, \partial_s \partial_t \gamma \ra
      = (-k F + \partial_s G) |\partial_u \gamma|.
     \end{aligned}
  \end{align}
Together with $ds = |\partial_u \gamma| du$, this yields \eqref{eq:commutator1}. We observe from $\partial_s = \frac{\partial_u}{|\partial_u \gamma|}$ that
  \begin{align*}
    \partial_t \partial_s 
      = \partial_t \left(\frac{1}{|\partial_u \gamma|} \right) \partial_u + \frac{1}{|\partial_u \gamma|} \partial_u \partial_t
      = - \frac{\partial_t |\partial_u \gamma|}{|\partial_u \gamma|} \partial_s + \partial_s \partial_t.
  \end{align*}
This together with \eqref{eq:commutator3} implies that
  \begin{align*}
    \partial_t \partial_s - \partial_s \partial_t = (k F - \partial_s G) \partial_s.
  \end{align*}
This completes the proof.
\end{proof}
From Lemma \ref{lem:commutator}, we derive the differential equations of the curvature $k$, the unit normal vector $\bnu$, and the unit tangential vector $\btau$.
\begin{lemma}\label{lem:ptk}
Let $\gamma : (-1, 1) \times [0, T) \to \R^2$ be a smooth solution to \eqref{GF} satisfying \eqref{BCofGF}. Then we have$\colon$ 
  \begin{align}
    \partial_t \btau &= (\partial_s F + k G) \bnu, \label{eq:pttau} \\
    \partial_t \bnu &= - (\partial_s F + k G) \btau, \label{eq:ptnu} \\
    \partial_t k &= \partial_s^2 F + k^2 F + \partial_s k G. \label{eq:ptk}
  \end{align}
\end{lemma}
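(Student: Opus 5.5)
We compute directly from the evolution equation \eqref{GF} and the commutator rule \eqref{eq:commutator2} of Lemma~\ref{lem:commutator}. The derivative of $\btau$ gives \eqref{eq:pttau}; the formula for $\bnu$ follows because $\bnu$ is a fixed rotation of $\btau$; and \eqref{eq:ptk} comes from differentiating the Frenet--Serret identity $\partial_s \btau = k\bnu$ in time.

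\emph{Step 1 (tangent).} Since $\btau = \partial_s \gamma$, \eqref{eq:commutator2} gives
\[
\partial_t \btau = \partial_s \partial_t \gamma + (kF - \partial_s G)\btau .
\]
Expanding $\partial_s(F\bnu + G\btau)$ with \eqref{FS} yields
\[
\partial_s F\,\bnu - kF\btau + \partial_s G\,\btau + kG\bnu .
\]
The $\btau$-components $-kF + \partial_s G$ cancel against $kF - \partial_s G$. What remains is $(\partial_s F + kG)\bnu$, which is \eqref{eq:pttau}. As a consistency check, $|\btau| = 1$ forces $\partial_t \btau \perp \btau$.

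\emph{Step 2 (normal).} $\bnu$ is obtained from $\btau$ by a fixed rotation $R$ by $\pi/2$, which is time independent. Hence $\partial_t \bnu = R\,\partial_t \btau = (\partial_s F + kG)\,R\bnu$. Since $R\bnu = -\btau$, this gives \eqref{eq:ptnu}. Equivalently, differentiate $\la \bnu, \bnu \ra = 1$ and $\la \bnu, \btau \ra = 0$ and use \eqref{eq:pttau}.

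\emph{Step 3 (curvature).} Write $k = \la \partial_s \btau, \bnu \ra$, so that
\[
\partial_t k = \la \partial_t \partial_s \btau, \bnu \ra + \la \partial_s \btau, \partial_t \bnu \ra .
\]
The second term vanishes, because $\partial_s \btau = k\bnu$ while $\partial_t \bnu$ is parallel to $\btau$. For the first term, apply \eqref{eq:commutator2} again:
\[
\partial_t \partial_s \btau = \partial_s \partial_t \btau + (kF - \partial_s G)k\bnu .
\]
Next compute $\partial_s\bigl((\partial_s F + kG)\bnu\bigr) = (\partial_s^2 F + \partial_s k\,G + k\,\partial_s G)\bnu - k(\partial_s F + kG)\btau$ and take its $\bnu$-component. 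Adding everything gives
\[
\partial_s^2 F + \partial_s k\,G + k\,\partial_s G + k^2 F - k\,\partial_s G .
\]
The $k\,\partial_s G$ terms cancel, leaving \eqref{eq:ptk}.

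The only delicate point is bookkeeping. One must apply the commutator with the correct sign and make sure the tangential-velocity terms $\partial_s G$ cancel in both Steps 1 and 3. The boundary conditions \eqref{BCofGF} are not needed, since these are pointwise identities in the interior and extend to the endpoints by smoothness.
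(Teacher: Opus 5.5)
Your proposal is correct and follows essentially the same route as the paper. You apply the commutator rule to $\partial_s\gamma$ to get \eqref{eq:pttau}, then apply it again to $\partial_s\btau$, project onto $\bnu$ and drop the vanishing term $\langle \partial_s\btau, \partial_t\bnu\rangle$ to get \eqref{eq:ptk}; the only cosmetic difference is that you derive \eqref{eq:ptnu} from the fixed rotation $\bnu = R\btau$ instead of decomposing $\partial_t\bnu$ in the frame $\{\btau,\bnu\}$, which you already note is equivalent.
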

\begin{proof}
From Lemma \ref{lem:commutator}, we have
  \begin{align*}
    \partial_t \btau 
      &= \partial_s \partial_t \gamma + (k F - \partial_s G) \partial_s \gamma\\
      &= \partial_s F \bnu - k F \btau + \partial_s G \btau + k G \bnu + k F \btau - \partial_s G \btau\\
      &= (\partial_s F  + k G) \bnu.
  \end{align*}
Thus we obtain \eqref{eq:pttau}. Moreover, we deduce from \eqref{eq:pttau} and $\la \btau, \bnu \ra = 0$ that
  \begin{align*}
    \la \btau, \partial_t \bnu \ra = - \la \partial_t \btau, \bnu \ra = - (\partial_s F + k G).
  \end{align*}
Since $|\bnu| = 1$ implies $\la \bnu, \partial_t \bnu \ra = 0$, we have
  \begin{align*}
    \partial_t \bnu = \la \bnu, \partial_t \bnu \ra \bnu + \la \btau, \partial_t \bnu \ra \btau = - (\partial_s F + k G) \btau, 
  \end{align*}
and then \eqref{eq:ptnu} follows. Finally, we prove \eqref{eq:ptk}. For the curvature vector $\bkappa = \partial_s^2 \gamma$, we observe from \eqref{FS}, \eqref{eq:pttau} and Lemma \ref{lem:commutator} that
  \begin{align}\label{eq:ptkappa}
    \begin{aligned}
    \partial_t \bkappa 
      &= \partial_s \partial_t \btau + (k F - \partial_s G) \partial_s \btau\\
      &= \partial_s \left( (\partial_s F + k G) \bnu \right) - (k^2 F - k \partial_s G) \bnu\\
      &= (\partial_s^2 F + k^2 F + \partial_s k G) \bnu - (k \partial_s F + k^2 G) \btau.
     \end{aligned}
    \end{align}
Since $k = \la \bkappa, \bnu \ra$, it follows from \eqref{eq:ptnu} and \eqref{eq:ptkappa} that
  \begin{align*}
    \partial_t k 
      = \la \partial_t \bkappa, \bnu \ra + \la \bkappa, \partial_t \bnu \ra
      = \partial_s^2 F + k^2 F + \partial_s k G.
  \end{align*}
This completes the proof.
\end{proof}
From \eqref{BCofGF}, we prove that the rotation number $\omega[\gamma]$ is invariant$\colon$
\begin{lemma}\label{lem:rot-invariant}
Let $\gamma : (-1, 1) \times [0, T) \to \R^2$ be a smooth solution to \eqref{GF} satisfying \eqref{BCofGF}. Then 
  \begin{align*}
    \omega[\gamma](t) = \omega[\gamma_0] \quad \text{for all} \quad t \in [0, T). 
  \end{align*}
\end{lemma}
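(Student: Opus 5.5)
The plan is to differentiate $\omega[\gamma](t)=\int_\gamma k\,ds=\int_{-1}^1 k\,|\partial_u\gamma|\,du$ in time and show that the derivative vanishes. By \eqref{eq:ptk} and \eqref{eq:commutator1} of Lemmas~\ref{lem:ptk} and \ref{lem:commutator},
\begin{align*}
\frac{d}{dt}\int_\gamma k\,ds
 &= \int_\gamma \bigl(\partial_s^2 F + k^2 F + \partial_s k\, G\bigr)\,ds + \int_\gamma k\bigl(-kF + \partial_s G\bigr)\,ds\\
 &= \int_\gamma \bigl(\partial_s^2 F + \partial_s(kG)\bigr)\,ds
 = \Bigl[\partial_s F + kG\Bigr]_{s=0}^{s=L[\gamma]}.
\end{align*}
The terms $k^2F$ cancel, and $\partial_s k\,G + k\,\partial_s G$ combines into the derivative $\partial_s(kG)$. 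Thus everything reduces to showing that $\partial_s F + kG$ vanishes at $u=\pm1$.

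For this I use \eqref{eq:pttau}, which gives $\partial_t\btau = (\partial_s F + kG)\bnu$. The boundary condition in \eqref{BCofGF} says $\btau(\pm1,t)$ equals a fixed vector ($\bv_{\theta_-}$ or $\bv_{\theta+\theta_+}$) for all $t\in(0,T)$. Differentiating in $t$ at $u=\pm1$, which is legitimate by smoothness up to the boundary, gives $\partial_t\btau(\pm1,t)=0$. Since $|\bnu|=1$, this forces $(\partial_s F + kG)(\pm1,t)=0$. Hence $\frac{d}{dt}\omega[\gamma]=0$ on $(0,T)$.

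Continuity of $\omega[\gamma](t)$ at $t=0$, from smoothness on $[0,T)$, then yields $\omega[\gamma](t)=\omega[\gamma_0]$ for all $t$.

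The only delicate point is justifying the boundary evaluation: the formulas hold on the open interval, and I extend them to $u=\pm1$ by continuity of the smooth solution. Alternatively, one can argue geometrically without any computation. Since $\partial_s\btau=k\bnu$, the quantity $\omega$ is the total turning angle of $\btau$, i.e.\ a continuous lift of the angle of $\btau(1,t)$ minus that of $\btau(-1,t)$. Both endpoint angles are fixed modulo $2\pi$, so $\omega$ takes values in a discrete set. Being continuous in $t$, it is therefore constant. I would present the computational proof and mention the geometric one as a remark.
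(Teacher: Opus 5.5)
Your proof is correct and is essentially the paper's argument. You differentiate $\int_\gamma k\,ds$ using \eqref{eq:ptk} and \eqref{eq:commutator1}, which reduces everything to the boundary term $[\partial_s F + kG]_{-1}^{1}$. You then kill that term with \eqref{eq:pttau} and the fixed tangent directions at the end points. The paper pairs $\partial_t\btau$ with $\bnu$ instead of using $\partial_t\btau=0$ as a vector, which is the same thing; your geometric remark (a continuous lift of the difference of end point angles that are fixed modulo $2\pi$) is also a valid, computation-free alternative.
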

\begin{proof}
From \eqref{BCofGF}, we see that
 \begin{align*}
   0 = \frac{d}{dt} \la \btau(-1, t), {\bm v}_{\theta_- + \frac{\pi}{2}} \ra
      = \la \partial_t \btau (-1, t), \bnu(-1, t) \ra
 \end{align*} 
Similarly, we obtain $\la \partial_t \btau (1, t), \bnu (1, t) \ra$ = 0. This together with Lemma \ref{lem:ptk} implies that 
  \begin{align*}
    \frac{d}{dt} \int_\gamma k \, ds
      &= \int_\gamma ( \partial_t k - k^2 F + k \partial_s G ) \, ds\\
      &= \int_\gamma ( \partial_s^2 F + \partial_s k G + k \partial_s G) \, ds\\
      &= \int_\gamma \partial_s (\partial_s F + k G) \, ds\\
      &= \int_\gamma \partial_s \la \partial_t \btau, \bnu \ra \, ds
      = \left[ \la \partial_t \btau, \bnu \ra \right]_{-1}^1
      = 0.
  \end{align*}
This completes the proof.
\end{proof}

At the end of this section, we derive the lower bound for the length of $\gamma$ satisfying \eqref{BCofGF}.
We make use of the isoperimetric inequality for open curves proved by Miura--Okabe~\cite[Theorem 3.1]{MO}. 
Here we define 
\[
X_\theta := \{ \gamma \in W^{1, \infty}((0, 1) ; \R^2) \mid \gamma(0) \in \eta_0, \,\, \gamma(1) \in \eta_\theta, \,\, |\gamma(0)|=|\gamma(1)|, \,\, L[\gamma]>0 \} 
\] 
for $\theta \in [0, 2\pi]$. 

\begin{proposition}[{\cite[Theorem 3.1]{MO}}] \label{prop:isoperimetric}
For any $\theta \in (0, 2\pi]$ and $\gamma \in X_\theta$, the following isoperimetric inequality holds$\colon$ 
\[
L[\gm]^2 \ge 2 \theta A[\gm]. 
\]
The equality is attained if and only if $\gm$ is a counterclockwise circular arc of central angle $\theta;$  
in particular, if $\theta \in (0, 2 \pi)$, then the arc needs to be centered at the origin.  
\end{proposition}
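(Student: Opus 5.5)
If $A[\gm]\le 0$ there is nothing to prove, so the plan assumes $A[\gm]>0$ and argues variationally. By scale invariance of $L^2/A$, I would fix $A[\gm]=\tfrac{\theta}{2}$ and minimize $L$ over
\[
\{\gm\in X_\theta : A[\gm]=\tfrac{\theta}{2}\}.
\]
The goal is to show that every minimizer is the counterclockwise unit arc of central angle $\theta$ centered at the origin. That arc has $L=\theta$ and $A=\theta/2$, so $L^2=2\theta A$. This gives the inequality, and the equality characterization then follows from the rigidity part of the argument.

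\emph{Existence of a minimizer.} Take a minimizing sequence, parametrized proportionally to arc length, so it is uniformly Lipschitz. The main point is that lengths bounded by some $\Lambda$ give a uniform $L^\infty$ bound. Write $R=|\gm(0)|=|\gm(1)|$. For $\theta\in(0,2\pi)$ the endpoints $R\bv_0$ and $R\bv_\theta$ are at distance $2R\sin(\theta/2)$, so $R\le \Lambda/(2\sin(\theta/2))$, and the whole curve lies in a ball of radius $R+\Lambda$. For $\theta=2\pi$ the curve is closed. There I would either translate so that the common endpoint lies at the origin, which is allowed since both endpoints lie on the same line, or invoke the classical isoperimetric inequality directly, because $L^2\ge 4\pi A$ is precisely the claim when $\theta=2\pi$. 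Arzel\`a--Ascoli then gives uniform convergence. Length is lower semicontinuous, and $A$ is continuous under uniform convergence with bounded length, since it is a line integral $-\frac12\int\la\gm,\bnu\ra\,ds$ and hence a continuous function of $\gm$ in $W^{1,\infty}$-weak$^*$. The constraints $\gm(0)\in\eta_0$, $\gm(1)\in\eta_\theta$, $|\gm(0)|=|\gm(1)|$ all pass to the limit, so a minimizer exists.

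\emph{Euler--Lagrange analysis.} I would vary the minimizer first by compactly supported normal perturbations. The Lagrange multiplier rule for $L-\lambda A$ gives constant curvature $k\equiv\lambda$ in the weak sense, so after bootstrapping the minimizer is a circular arc or a segment; a segment is excluded because it has $A\le 0$ relative to the origin here. Next I would use variations that move the endpoints along $\eta_0$ and $\eta_\theta$ while preserving $|\gm(0)|=|\gm(1)|$, that is, both endpoints moving radially by the same amount $\delta$. The first variation of $L$ contributes $\la\btau,\bv_\theta\ra-\la\btau,\bv_0\ra$ at the ends. The first variation of $A$ contributes the boundary terms $\pm\frac12\la\gm,\bnu\ra$ times the normal component of the endpoint displacement. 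Combining this with a rotation of the whole configuration about the origin, which preserves $L$, $A$ and $|\gm(0)|=|\gm(1)|$ but not the lines, I expect to force the contact angles to satisfy $\btau(0)\perp\bv_0$ and $\btau(1)\perp\bv_\theta$. An arc meeting both rays orthogonally at equal radius must be centered at the origin. Its central angle is then $\theta+2\pi j$, and minimality forces $j=0$, which yields the claimed arc and the inequality.

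\emph{Main obstacle.} The step I expect to be hardest is extracting orthogonality at each end separately from the single coupled constraint $|\gm(0)|=|\gm(1)|$. Since that constraint only allows equal radial motion of the two ends, it a priori gives one scalar condition, not two. My first attempt would use the extra freedom that the radial displacements of the two endpoints enter the area variation with opposite-signed boundary terms. Together with the multiplier $\lambda$, this should determine the arc's radius and its center relative to the origin. If that fails, I would instead close the arc by the origin-centered circular arc of radius $R$ through the complementary angle $2\pi-\theta$, and compare with the classical isoperimetric inequality applied to this closed curve, optimizing over arcs with constant curvature as a one-parameter family. I also expect some technical care to be needed when the curve passes through the origin or when $\theta=2\pi$. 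In those cases I would argue by approximation, using the continuity of $A$ and the lower semicontinuity of $L$.
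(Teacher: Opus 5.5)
The paper does not prove this proposition; it is quoted from \cite[Theorem 3.1]{MO}. Your argument therefore has to stand on its own, and it has a genuine gap exactly at the step you flag.

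Neither mechanism you suggest for that step works. A rotation about the origin moves the endpoints off $\eta_0$ and $\eta_\theta$, so it is not an admissible variation and gives no first-order condition. The only admissible endpoint variation is $\phi(0)=\delta r\,\bv_0$, $\phi(1)=\delta r\,\bv_\theta$. For it, the boundary term in $\delta A$ is $\frac12\bigl[\gm_1\phi_2-\gm_2\phi_1\bigr]_0^1$, which vanishes because $\phi\parallel\gm$ at both ends. So there is no opposite-signed area contribution to exploit, and the first variation yields only $k\equiv\lambda$ and the single condition $\la\btau(1),\bv_\theta\ra=\la\btau(0),\bv_0\ra$.

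The missing idea is that this one condition suffices by symmetry. A circle through $R\bv_0$ and $R\bv_\theta$ is centered on the bisecting line $\R\bv_{\theta/2}$. The reflection $S$ across that line swaps the endpoints and reverses the arc, so $\btau(0)=-S\btau(1)$, i.e.\ $\la\btau(0),\bv_0\ra=-\la\btau(1),\bv_\theta\ra$. Combined with the natural condition, both inner products vanish. Hence the radius at $R\bv_0$ lies along $\eta_0$, and for $\theta<2\pi$ the center lies in $\eta_0\cap\R\bv_{\theta/2}=\{0\}$. The same identity is what excludes segments, since $\la\bv_\theta-\bv_0,\bv_0\ra=\cos\theta-1\neq0$. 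Your stated reason for excluding them is false: the segment from $R\bv_0$ to $R\bv_\theta$ has $A=\frac12R^2\sin\theta>0$ for $\theta\in(0,\pi)$.

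Your fallback, once carried out, is a complete proof on its own. It makes the existence, regularity and multiplier steps, and the through-the-origin case, unnecessary. Close $\gm$ with $\beta(\phi)=R\bv_\phi$, $\phi\in[\theta,2\pi]$. The closed curve $\Gamma$ has $L[\Gamma]=L+(2\pi-\theta)R$ and $A[\Gamma]=A+\frac12(2\pi-\theta)R^2$. The classical inequality for the signed area of closed curves, $L[\Gamma]^2\ge4\pi A[\Gamma]$, rearranges to
\[
4\pi A\le L^2+(2\pi-\theta)\bigl(2LR-\theta R^2\bigr)\le L^2+(2\pi-\theta)\frac{L^2}{\theta}=\frac{2\pi}{\theta}L^2,
\]
because $2LR-\theta R^2\le L^2/\theta$ is equivalent to $(L-\theta R)^2\ge0$. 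Equality forces $L=\theta R$ and $\Gamma$ to be a once-traversed counterclockwise circle. For $\theta<2\pi$ and $R>0$ that circle contains a nontrivial arc of $\{|x|=R\}$, hence is that circle, so $\gm$ is the origin-centered arc. For $R=0$ the inequality is strict, and $\theta=2\pi$ is the classical case. Hurwitz's Fourier argument with the twisted condition $\gm(1)=e^{i\theta}\gm(0)$ is an equally short alternative.

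Finally, both this argument and your bound $R\le\Lambda/(2\sin(\theta/2))$ use that the endpoints lie on the rays $\{\lambda\bv_0:\lambda\ge0\}$ and $\{\lambda\bv_\theta:\lambda\ge0\}$. You should state this explicitly. With the full lines $\eta_\alpha$ as defined in this paper, the statement fails for $\theta\in(\pi,2\pi]$. For example, the counterclockwise unit arc from $(1,0)$ to $-\bv_\theta\in\eta_\theta$ has $L^2=(\theta-\pi)^2<\theta(\theta-\pi)=2\theta A$. The ray version is the one actually applied in Lemma~\ref{lem:Lbdd}.
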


Modifying the set $X_\theta$ in Proposition~\ref{prop:isoperimetric}, we obtain an isoperimetric inequality for $\gamma$ satisfying \eqref{BCofGF}: 
\begin{lemma}\label{lem:Lbdd}
Let $\theta \in (0, \pi]$ and $\theta_-, \theta_+ \in (0, 2 \pi) \setminus \{ \pi \}$. Let
  \begin{align*}
    X_{\theta, \theta_-, \theta_+} := \left\{ \gamma \in C^{1}([0, 1]; \R^2) \,\, \middle| \,\,
    \begin{aligned}
     &\gamma(0) \in \eta_0, \,\,  \gamma (1) \in \eta_\theta, \,\, L[\gamma] > 0,\\
     &{\bm \tau}(0) = { \bm v }_{\theta_-}, \,\, {\bm \tau}(1) = {\bm v}_{\theta + \theta_+}
     \end{aligned}
     \right\}.
  \end{align*}
For any $\gamma \in X_{\theta, \theta_-, \theta_+}$,
  \begin{align}\label{eq:Lbdd}
    L[\gamma]^2 \geq 2 \theta A[\gamma].
  \end{align}
\end{lemma}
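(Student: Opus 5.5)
The plan is to reduce the claim to Proposition~\ref{prop:isoperimetric} by reflection. The idea is to double the curve so that the constraint $|\gamma(0)|=|\gamma(1)|$ in $X_\theta$ holds automatically. The contact-angle conditions in $X_{\theta,\theta_-,\theta_+}$ will play no role. Only the end point conditions $\gamma(0)\in\eta_0$, $\gamma(1)\in\eta_\theta$ and $L[\gamma]>0$ will be used.

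\textbf{Construction.} Let $R_\theta$ denote the reflection of $\R^2$ across the line $\eta_\theta$. It is a linear isometry fixing $\eta_\theta$ pointwise and sending $\eta_0$ onto $\eta_{2\theta}$. Given $\gamma\in X_{\theta,\theta_-,\theta_+}$, define the concatenated curve $\tilde\gamma:[0,1]\to\R^2$ by
\[
\tilde\gamma(u):=\gamma(2u)\ \ (0\le u\le \tfrac12),\qquad \tilde\gamma(u):=R_\theta\gamma(2-2u)\ \ (\tfrac12\le u\le 1).
\]
It is continuous at $u=\tfrac12$ because $\gamma(1)\in\eta_\theta$ is fixed by $R_\theta$. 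It is Lipschitz, so it lies in $W^{1,\infty}$. Its end points are $\tilde\gamma(0)=\gamma(0)\in\eta_0$ and $\tilde\gamma(1)=R_\theta\gamma(0)\in\eta_{2\theta}$. Since $R_\theta$ is an isometry, $|\tilde\gamma(1)|=|\gamma(0)|=|\tilde\gamma(0)|$. Also $L[\tilde\gamma]=2L[\gamma]>0$, and $2\theta\in(0,2\pi]$ because $\theta\in(0,\pi]$. Hence $\tilde\gamma\in X_{2\theta}$.

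\textbf{Area computation.} I will check that $A[\tilde\gamma]=2A[\gamma]$. Write the area as $A[\gamma]=\frac12\int_0^1 \det(\gamma,\partial_u\gamma)\,du$, which is additive under concatenation. The orientation sign in $\langle\gamma,\bnu\rangle$ with $\bnu$ the rotated tangent should be checked against the paper's convention, but additivity and the scaling argument are insensitive to it. For the second piece, the substitution $v=2-2u$ reverses orientation, producing a factor $-1$. Since $R_\theta$ has determinant $-1$, we also have $\det(R_\theta x,R_\theta y)=-\det(x,y)$. The two sign changes cancel, so the second piece contributes exactly $A[\gamma]$. Geometrically, the mirror image traversed backwards encloses the same signed region, and the radial segments to the shared point $\gamma(1)$ cancel.

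\textbf{Conclusion.} Proposition~\ref{prop:isoperimetric} applied with angle $2\theta$ gives
\[
4L[\gamma]^2=L[\tilde\gamma]^2\ge 2(2\theta)A[\tilde\gamma]=8\theta A[\gamma],
\]
which is \eqref{eq:Lbdd}. If $A[\gamma]\le0$, the inequality is trivial anyway. The main point requiring care is the sign and additivity bookkeeping for $A$ under reflection and reversal. The parametrization domain $[0,1]$ versus $[-1,1]$ is immaterial, since the proposition only needs a Lipschitz curve.
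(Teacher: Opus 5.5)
Your argument is correct and is essentially the paper's: the paper reflects across $\eta_0$ and glues at $\gamma(0)$, while you reflect across $\eta_\theta$ and glue at $\gamma(1)$; both then apply Proposition~\ref{prop:isoperimetric} with angle $2\theta$ to a curve of twice the length and twice the signed area, and your remark that two $C^1$ pieces agreeing at the junction already form a Lipschitz curve makes the paper's Steps~1--2, and with them the contact-angle hypotheses, unnecessary. The one point to add is that Proposition~\ref{prop:isoperimetric} only holds when the lines are read as half-lines $\{\lambda{\bm v}_\alpha \mid \lambda\ge 0\}$ (with full lines it fails once the angle exceeds $\pi$, which $2\theta$ can; e.g.\ for angle $2\pi$ the counterclockwise semicircle from $(R,0)$ to $(-R,0)$ has $L^2=\pi^2R^2<2\pi^2R^2=4\pi A$), and this is what the paper's case distinction handles; in your construction $\tilde\gamma(0)=r{\bm v}_0$ and $\tilde\gamma(1)=r{\bm v}_{2\theta}$ with the same $r\in\R$, so for $r<0$ you apply the proposition to $-\tilde\gamma$, which has the same length and signed area, and a single case indeed suffices.
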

\begin{proof}
The proof is based on reflecting the curve with respect to the skew lines and applying Proposition~\ref{prop:isoperimetric}. The right-angle case was proved by Hiroi--Okabe~\cite[Lemma 4.2]{HO_2025}. We therefore sketch the proof of Lemma \ref{lem:Lbdd} here.
Set 
\[
\eta_\alpha^+ := \{ \lambda {\bm v}_\alpha \mid \lambda > 0 \}, \qquad
\eta_\alpha^- := \{ \lambda {\bm v}_\alpha \mid \lambda < 0 \},  
\]
and we split the proof into the following four cases:

\noindent
{\bf Case 1. }{\sl We consider the case of $\gamma(1) \in \eta_\theta^+$.} 

\noindent
{\bf Case 2. }{\sl We consider the case of $\gm(1) = (0,0)$ and $\gm(0) \in \eta_0^+ \cup \{(0,0) \}$.}

\noindent
{\bf Case 3. }{\sl We consider the case of $\gm(1) = (0,0)$ and $\gm(0) \in \eta_0^-$.}

\noindent
{\bf Case 4. }{\sl We consider the case of $\gm(1) \in \eta_\theta^-$.} 

\noindent
For each case, we extend the curves by reflecting with respect to the skew lines. Here, we prove \eqref{eq:Lbdd} in Case 1 (see Fig. \ref{fig2}). 
\begin{figure}[htbp]
  \centering
 \includegraphics[width=0.42\textwidth]{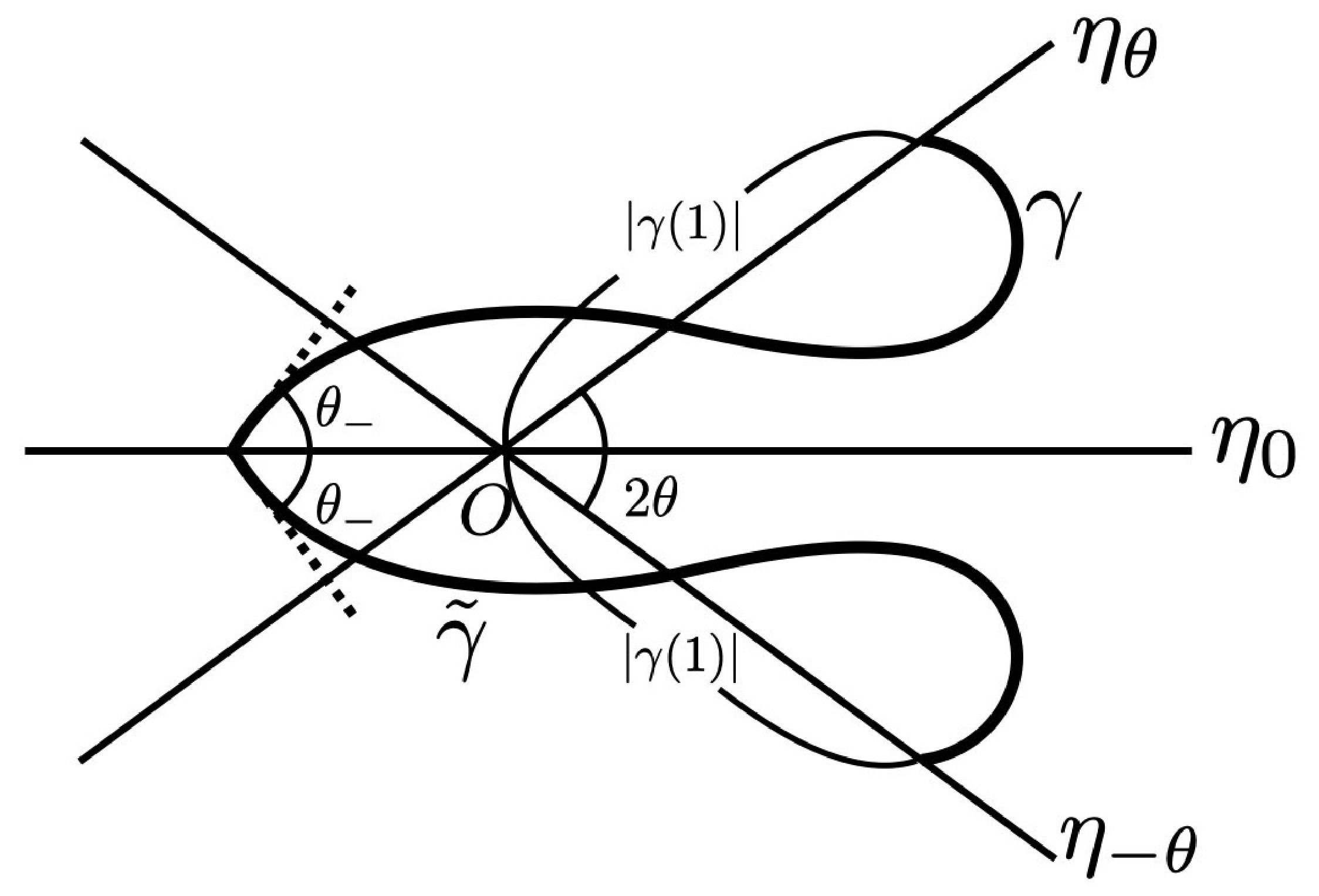}
  \caption{$\tilde{\gamma}$ in Case 1}
  \label{fig2}
\end{figure}

We define $\tilde{\gamma}$ by 
\begin{equation} \label{eq:def-t-gm}
\begin{aligned}
\tilde{\gamma}(x) = 
\begin{cases}
(\gamma_1(-x), -\gamma_2(-x)) \ \quad &\text{for} \quad x \in [-1, 0], \\
(\gamma_1(x), \gamma_2(x)) \quad &\text{for} \quad x \in [0, 1].
\end{cases}
\end{aligned}
\end{equation}
It is clear that 
\[
\tilde{\gm}(-1) \in \eta_{-\theta}^+, \quad \tilde{\gm}(1) \in \eta_\theta^+, \quad |\tilde{\gm}(-1)|=|\tilde{\gm}(1)|, \quad \text{and} \quad L[\tilde{\gm}] = 2 L[\gm]>0. 
\]
Once we verify that $\tilde{\gamma} \in W^{1, \infty} ((-1, 1); \R^2)$, Proposition~\ref{prop:isoperimetric} yields
  \begin{align}\label{eq:pre-case-1}
    L[\tilde{\gamma}]^2 \geq 4 \theta A[\tilde{\gamma}].
  \end{align}
Indeed, by the definition of $\tilde{\gamma}$, we have
  \begin{equation*}
    \begin{aligned}
      A[\tilde{\gamma}] 
       &= -\frac{1}{2} \int_{-1}^1 \Bigl\langle \tilde{\gamma}(x), R \frac{\tilde{\gamma}'(x)}{|\tilde{\gamma}'(x)|} \Bigr\rangle |\tilde{\gamma}'(x)| \, dx\\
       &= -\frac{1}{2} \int_{-1}^0 (\gamma_1(-x), -\gamma_2(-x)) \cdot \dfrac{R (-\gamma_1'(-x), \gamma_2'(-x))}{|\gamma'(-x)|} |\gamma'(-x)| \, dx + A[\gm]\\
       &= -\frac{1}{2} \int_0^1 \frac{1}{|\gamma'(y)|} (-\gamma_1(y) \gamma_2'(y) + \gamma_2(y) \gamma_1'(y) ) |\gamma'(y)| \, dy + A[\gamma]\\
       &= -\frac{1}{2} \int_{0}^1 \Bigl\langle \gamma(y), R \frac{\gamma'(y)}{|\gamma'(y)|} \Bigr\rangle |\gamma'(y)| \, dy + A[\gm] 
         = 2A[\gamma].
    \end{aligned}
  \end{equation*}
Since $L[\tilde{\gm}]=2L[\gm]$, this together with \eqref{eq:pre-case-1} implies that 
\[
L[\gm]^2 \ge 2 \theta A[\gm].
\]
Now, we prove $\tilde{\gamma} \in W^{1, \infty}((-1, 1); \R^2)$. From $\gamma \in C^1 ([0, 1]; \R^2)$ and \eqref{eq:def-t-gm}, $\tilde{\gamma}$ is locally Lipschitz continuous in $(-1, 1) \setminus \{ 0 \}$. Thus it suffices to check the local Lipschitz continuity of $\tilde{\gamma}$ at $x = 0$. We define $R_\alpha$ by
  \begin{align*}
    R_\alpha : = 
      \begin{pmatrix}
      \cos \alpha &-\sin \alpha \\
      \sin \alpha &\cos \alpha
      \end{pmatrix},      
  \end{align*}
and consider $R_{\frac{\pi}{2}} \tilde{\gamma}$. We prove the local Lipschitz continuity of $R_{\frac{\pi}{2}} \tilde{\gamma}$ at $x = 0$. We divide the argument into two steps.

\noindent
{\bf Step 1.} {\sl We prove $R_{\frac{\pi}{2}} \tilde{\gamma}$ can be represented as a graph in a neighborhood of $x = 0$.}
From \eqref{BCofGF}, we have
  \begin{align}\label{eq:tanof0}
    \lim_{x \to +0} \gamma' (x) = (\cos \theta_-, \sin \theta_-).
  \end{align}
By \eqref{eq:def-t-gm}, we express $R_{\frac{\pi}{2}} \tilde{\gamma}$ as
  \begin{align*}
    R_{\frac{\pi}{2}} \tilde{\gamma} := (\zeta_1 (x), \zeta_2 (x)) =
      \begin{cases}
        (\gamma_2 (-x), \gamma_1 (-x)) \quad \text{for} \quad x \in [-1, 0],\\
        (- \gamma_2 (x), \gamma_1 (x)) \quad \text{for} \quad x \in [0, 1].
      \end{cases}
  \end{align*}  
 Here we note $\zeta_1 (0) = 0$. From \eqref{eq:tanof0}, we have
  \begin{align*}
    \lim_{x \to +0} \frac{d}{dx}(- \gamma_2 (x)) = \lim_{x \to -0} \frac{d}{dx}(\gamma_2 (-x)) = - \sin \theta_-.
  \end{align*}
This together with $\theta_- \in (0, 2\pi) \setminus \{ \pi \}$ implies that
  \begin{align*}
    \frac{d}{dx} \zeta_1 (0) = - \sin \theta_- \neq 0.
  \end{align*}
Thus there exists some $\delta_0 > 0$ and $f : [-\delta_0, \delta_0] \to \R$ such that
  \begin{align*}
    R_{\frac{\pi}{2}} \tilde{\gamma}(y) = (y, f(y)) \quad \text{for} \quad y \in [-\delta_0, \delta_0].
  \end{align*}
   
\noindent
{\bf Step 2.} {\sl We prove $f$ is locally Lipschitz continuous at $y = 0$.}
From $\gamma \in C^1([0, 1]; \R^2)$, $f$ is locally Lipschitz continuous on $(-\delta, \delta) \setminus \{ 0\}$. From \eqref{BCofGF}, we have
  \begin{align}\label{eq:tanoff}
    \lim_{y \to +0} \frac{d}{dy} f (y) = \cot \theta_-, \quad 
    \lim_{y \to -0} \frac{d}{dy} f (y) &= - \cot \theta_-.
  \end{align}
Fix $\varepsilon > 0$ arbitrarily. From \eqref{eq:tanoff}, there exists $\delta \in (0, \delta_0]$ such that
  \begin{align*}
    \left| \frac{f(y_+) - f(0)}{y_+} - \cot \theta_- \right| &< \varepsilon \quad &\text{for all} \quad y_+ \in [0, \delta),\\
    \left| \frac{f(y_-) - f(0)}{y_-} + \cot \theta_- \right| &< \varepsilon \quad &\text{for all} \quad y_- \in (-\delta, 0].\\
  \end{align*}
Thus we have
  \begin{align*}
     | f(y_+) - f(0) | 
       &< \left( \left| \cot \theta_- \right| + \varepsilon \right) |y_+|\quad &\text{for all} \quad y_+ \in [0, \delta),\\
     | f(y_-) - f(0) | 
       &<\left( \left| \cot \theta_- \right| + \varepsilon \right) |y_-| \quad &\text{for all} \quad y_- \in (-\delta, 0].\\
  \end{align*}
This implies that
  \begin{align*}
    |f(y_+) - f(y_-)| 
      &\leq |f(y_+)-f(0)|+|f(y_-)-f(0)|\\
      &< 2 \left( \left| \cot \theta_- \right| + \varepsilon \right) \left( |y_+| + |y_-| \right)\\
      &= 2 \left( \left| \cot \theta_- \right| + \varepsilon \right) |y_+ - y_-|
   \end{align*} 
for all $y_+ \in [0, \delta)$ and $y_- \in (-\delta, 0]$. Thus $f$ is locally Lipschitz continuous at $y = 0$.

From Steps 1 and 2, we see that $R_{\frac{\pi}{2}} \tilde{\gamma}$ is locally Lipschitz continuous at $x = 0$ and this implies $\tilde{\gamma} \in W^{1, \infty} ((-1, 1); \R^2)$. Cases 2-4 can be proved by a similar argument (see \cite[Lemma 4.1]{HO_2025}).
\end{proof}


\section{Formulation of the modified problem}\label{sec:set}

To construct solutions to \eqref{P}, we consider the following initial-boundary value problem:
\begin{align}\label{MP}\tag{MP}
\begin{cases}
\partial_t \gamma (u, t) = F_m[k] {\bm \nu} + G_m[k] {\bm \tau} \quad &\text{in} \quad (-1, 1) \times (0, T),\\
\gamma(-1, t) \in \eta_0, \quad \gamma(1, t) \in \eta_\theta, \quad &\text{in} \quad (0, T),\\
{\bm \tau} (-1, t) = { \bm v }_{\theta_-}, \quad {\bm \tau} (1, t) = {\bm v}_{\theta + \theta_+}, \quad &\text{in} \quad (0, T),\\
\partial^{2j-1}_s k (\pm1, t) =0 \quad ( j \in \{ 1, 2, 3, \dots , m \} ) \quad &\text{in} \quad (0, T) \quad \text{if} \quad m \geq 1,\\
\gamma(u, 0) = \gamma_0 (u) \quad &\text{in} \quad (-1, 1),\\
\end{cases}
\end{align}
where $G_m[k]: [-1, 1] \times [0, T) \to \R$ is given by
\begin{equation}\label{def:Gmk}
\begin{aligned}
G_m[k]
&:= \int_0^s k F_m[k] \, d\tilde{s}\\
&+ \frac{s}{L[\gamma]} \left(- \int_\gamma k F_m[k]\, ds - F_m[k] (1, t) \cot \theta_+ + F_m[k](-1, t) \cot \theta_-  \right)\\
&-F_m[k](-1, t) \cot \theta_-.
\end{aligned}
\end{equation}
If \eqref{MP} admits a solution, then it also satisfies \eqref{P}. Therefore, it suffices to study \eqref{MP} to prove Theorem~\ref{shukekka}.
We derive the modified system \eqref{MP} in the next subsection.


\subsection{Derivation of \eqref{MP}}\label{sec:for}

This section provides the preliminary calculations needed to study \eqref{MP}. We assume that $\gamma_0 : (-1,1) \to \R^2$ is a smooth immersed curve and satisfies \eqref{ACF:BC}. 
We derive $G_m[k]$ from the boundary condition of \eqref{MP} so that the solution to \eqref{MP} satisfies the following:
\begin{align}\label{eq:u1}
|\partial_u \gamma| = \frac{L[\gamma]}{2} \quad \text{for all} \quad (u, t) \in [-1, 1] \times [0, T).
\end{align}
Assume $\gamma = (\gamma_1, \gamma_2)$ satisfies the boundary conditions of \eqref{MP}, \eqref{eq:u1} and
\begin{align}\label{eq:alpha1}
\partial_t \gamma = F_m[k] {\bm \nu} + \alpha {\bm \tau} \quad \text{in} \quad [-1, 1] \times [0,T),
\end{align}
where $\alpha: [-1, 1] \times [0, T) \to \R$ denotes the tangential velocity of $\gamma$. 
From \eqref{eq:u1} and \eqref{eq:alpha1}, we have
\begin{align*}
\frac{L[\gamma] \frac{d}{dt} L[\gamma]}{2} 
&= \partial_t |\partial_u \gamma|^2
= 2\langle \partial_u \partial_t \gamma, \partial_u \gamma \rangle \\
&=\frac{L[\gamma]^2}{2} \langle \partial_s ( F_m[k] {\bm \nu} + \alpha {\bm \tau}), {\bm \tau} \rangle\\
&=\frac{L[\gamma]^2}{2} (-k F_m[k] + \partial_s \alpha).
\end{align*}
Thus, we have
\begin{align}\label{eq:alpha2}
\partial_u \alpha = \frac{L[\gamma] k F_m[k] + \frac{d}{dt} L[\gamma]}{2}.
\end{align}
Next, we derive $\frac{d}{dt} L[\gamma]$ using the boundary conditions of \eqref{MP}. From \eqref{eq:alpha1},
\begin{align*}
{\bm \nu} (-1, t) &= (- \sin \theta_-, \cos \theta_-) \quad \text{and} \quad
{\bm \tau} (-1, t) = (\cos \theta_-, \sin \theta_-),
\end{align*}
we have
\begin{align*}
{}^t\!\left( \frac{d}{dt} \gamma (-1, t) \right)
=
\begin{pmatrix}
- F_m[k] (-1, t) \sin \theta_- + \alpha(-1, t) \cos \theta_- \\
F_m[k] (-1, t) \cos \theta_- + \alpha (-1, t) \sin \theta_-
\end{pmatrix}.
\end{align*}
This together with $\gamma (-1, t) \in \eta_0$ implies that
\begin{equation}\label{eq:alpha3}
\begin{aligned}
&F_m[k] (-1, t) \cos \theta_- + \alpha (-1, t) \sin \theta_- = 0.
\end{aligned}
\end{equation}
On the other hand, from \eqref{eq:alpha1},
\begin{align*}
{\bm \nu} (1, t) = (- \sin (\theta + \theta_+), \cos (\theta + \theta_+))
\quad \text{and} \quad
{\bm \tau} (1, t) = (\cos (\theta + \theta_+), \sin (\theta + \theta_+)),
\end{align*}
we have
\begin{align}\label{eq:alpha4}
{}^t\!\left( \frac{d}{dt} \gamma (1, t) \right)
=
\begin{pmatrix}
- F_m[k] (1, t) \sin (\theta + \theta_+) + \alpha(1, t) \cos (\theta + \theta_+) \\
F_m[k] (1, t) \cos (\theta + \theta_+) + \alpha (1, t) \sin (\theta + \theta_+)
\end{pmatrix}.
\end{align}
Here, we define $R_\alpha$ by
\begin{align*}
R_\alpha =
\begin{pmatrix}
\cos \alpha & -\sin \alpha\\
\sin \alpha & \cos \alpha
\end{pmatrix}
\end{align*}
and consider
\begin{align*}
R_{-\theta} {}^t\!\left( \frac{d}{dt} \gamma (1, t) \right)
&=
\begin{pmatrix}
\cos \theta & \sin \theta\\
-\sin \theta & \cos \theta
\end{pmatrix}
\begin{pmatrix}
\frac{d}{dt} \gamma_1 (1, t)\\
\frac{d}{dt} \gamma_2 (1, t)
\end{pmatrix}\\
&=
\begin{pmatrix}
\cos \theta \frac{d}{dt} \gamma_1(1, t)
+\sin \theta \frac{d}{dt} \gamma_2 (1, t)\\
-\sin \theta \frac{d}{dt} \gamma_1(1, t)
+\cos \theta \frac{d}{dt} \gamma_2 (1, t)
\end{pmatrix}.
\end{align*}
From $\gamma(1, t) \in \eta_\theta$ and \eqref{eq:alpha4}, we have
\begin{align}\label{eq:alpha5}
\begin{aligned}
0
&= -\sin \theta \frac{d}{dt} \gamma_1(1, t)
+\cos \theta \frac{d}{dt} \gamma_2 (1, t)\\
&= F_m[k] (1, t) \sin (\theta + \theta_+) \sin \theta
-\alpha (1, t) \cos (\theta + \theta_+) \sin \theta\\
&\quad
+F_m[k] (1, t) \cos (\theta + \theta_+) \cos \theta
+\alpha (1, t) \sin (\theta + \theta_+) \cos \theta\\
&= F_m[k] (1, t) \cos \theta_+
+\alpha (1, t) \sin \theta_+.
\end{aligned}
\end{align}
Thus, from \eqref{eq:alpha3} and \eqref{eq:alpha5}, we have
\begin{align}\label{eq:alphapm1}
\alpha(\pm1, t) = - F_m[k] (\pm1, t) \cot \theta_{\pm}.
\end{align}
Thus, from Lemma \ref{lem:commutator}, we have
\begin{equation}
\begin{aligned}\label{eq:dtLGamma}
\frac{d}{dt} L[\gamma]
&= \int_\gamma \langle \partial_s \partial_t \gamma, \partial_s \gamma \rangle \, ds\\
&= - \int_\gamma k F_m[k] \, ds
- F_m[k](1, t) \cot \theta_+
+ F_m[k](-1, t) \cot \theta_-.
\end{aligned}
\end{equation}
Integrating both sides of \eqref{eq:alpha2} and substituting \eqref{eq:dtLGamma}, we derive $\alpha = G_m[k]$.

\begin{remark}\label{rem:pm1Gamma}
Let $\theta \in (0, \pi)$ and let $\theta_\pm \in (0, 2\pi) \setminus \{ \pi \}$. Then the end points of a solution to \eqref{MP} evolve according to the following equations:
\begin{align}\label{eq:pm1Gamma}
\partial_t \gamma (-1, t) = - \frac{F_m[k](-1, t)}{\sin \theta_-} {\bm v}_0, \quad \partial_t \gamma (1, t) = - \frac{F_m[k](1, t)}{\sin \theta_+} {\bm v}_\theta,
\end{align}
for all $t \in [0, T)$. Indeed, from the boundary conditions in \eqref{MP}, we have
\begin{align*}
&{\bm \tau} (-1, t) = (\cos \theta_-, \sin \theta_-), \quad {\bm \nu} (-1, t) = (- \sin \theta_-, \cos \theta_-),\\
&{\bm \tau} (1, t) = (\cos(\theta + \theta_+), \sin (\theta + \theta_+)), \quad {\bm \nu} (1, t) = (- \sin (\theta + \theta_+), \cos (\theta + \theta_+)),
\end{align*}
for all $t \in [0, T)$. This together with \eqref{eq:alphapm1} implies that
\begin{align*}
\partial_t \gamma (-1, t) 
  &= F_m[k] (-1, t) {\bm \nu} (-1, t) - F_m[k] (-1, t) \cot \theta_- {\bm \tau} (-1, t)\\
  &= - \frac{F_m[k] (-1, t)}{\sin \theta_-} (1, 0) = - \frac{F_m[k] (-1, t)}{\sin \theta_-} {\bm v}_0,
\end{align*}
and
\begin{align*}
\partial_t \gamma (1, t) 
  &= F_m[k] (1, t) {\bm \nu} (1, t) - F_m[k] (1, t) \cot \theta_+ {\bm \tau} (1, t)\\
  &= - \frac{F_m[k] (1, t)}{\sin \theta_+} \left( \cos \theta, \sin \theta \right) = - \frac{F_m[k] (1, t)}{\sin \theta_+} {\bm v}_\theta,
\end{align*}
for all $t \in [0, T)$. This means that the end points of a solution to \eqref{MP} move along the skew lines.
\end{remark}


\subsection{Fundamental properties of \eqref{MP}}\label{sec:pre}
In what follows, we collect the fundamental properties of solutions of \eqref{MP}.
We prepare the commutator rule for the solution to \eqref{MP}. For $(u, t) \in (-1, 1) \times [0, T)$, see Lemma~\ref{lem:commutator}. For $(u, t) \in \{-1, 1\} \times [0, T)$, from \eqref{eq:pm1Gamma}, we obtain the following commutator rule:
\begin{lemma}\label{lem:pm1commutator}
Let $m \in \N \cup \{ 0 \}$, $\theta \in (0, \pi]$ and $\theta_-, \theta_+ \in (0, 2\pi) \setminus \{ \pi \}$. Assume $\gamma : (-1,1) \times [0, T) \to \R^2$ is a solution to \eqref{MP}. Then the following holds$:$
\begin{align*}
\partial_t \partial_s = \partial_s \partial_t
\quad
\text{for all}
\quad
(u, t) \in \{-1, 1\} \times [0, T).
\end{align*}
\end{lemma}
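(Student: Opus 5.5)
The plan is to reduce the statement to the interior commutator formula and then check that the coefficient vanishes at the end points. By Lemma~\ref{lem:commutator}, applied with $F = F_m[k]$ and $G = G_m[k]$ and extended to $u = \pm 1$ by smoothness up to the boundary, we have
\[
\partial_t \partial_s - \partial_s \partial_t = \bigl(kF_m[k] - \partial_s G_m[k]\bigr)\partial_s .
\]
So it suffices to show that the scalar $kF_m[k] - \partial_s G_m[k]$ vanishes at $(\pm1,t)$, equivalently that $\partial_t|\partial_u\gamma| = \langle \btau, \partial_s\partial_t\gamma\rangle$ vanishes there.

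To evaluate $\langle \btau, \partial_s \partial_t\gamma\rangle$ at the end points, I would use Remark~\ref{rem:pm1Gamma}. By \eqref{eq:pm1Gamma}, $\partial_t\gamma(\pm1,t)$ is a multiple of the fixed vectors ${\bm v}_0$ and ${\bm v}_\theta$. The boundary conditions of \eqref{MP} freeze $\btau(\pm1,t)$, so $\partial_t \btau(\pm1,t) = 0$, and the normal part of the computation is automatic. Indeed, by Lemma~\ref{lem:ptk} and the proof of Lemma~\ref{lem:rot-invariant}, $\partial_s F_m[k] + kG_m[k] = \langle\partial_t\btau,\bnu\rangle = 0$ at $u=\pm1$. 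Since
\[
\partial_s(F\bnu + G\btau) = (\partial_s F + kG)\bnu + (\partial_s G - kF)\btau,
\]
this gives $\partial_s\partial_t\gamma = (\partial_s G_m[k] - kF_m[k])\btau$ at the boundary, and everything reduces to the single tangential coefficient.

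For that coefficient I would differentiate \eqref{def:Gmk} in $s$, which gives
\[
\partial_s G_m[k] = kF_m[k] + \frac{1}{L[\gamma]}\frac{d}{dt}L[\gamma],
\]
using \eqref{eq:dtLGamma}. Hence $kF_m[k] - \partial_s G_m[k] = -\frac{1}{L[\gamma]}\frac{d}{dt}L[\gamma]$ at $u=\pm1$. The remaining task is to show that this contribution does not affect $\partial_t\partial_s - \partial_s\partial_t$ on the quantities evaluated at the boundary. My first attempt would be to combine the constant-speed normalization \eqref{eq:u1} with \eqref{eq:pm1Gamma}, using that along the boundary the commutator acts only through $\partial_s$ of quantities whose boundary values are prescribed.

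\textbf{Main obstacle.} This last step is where I expect the difficulty. The term $\frac{1}{L}\frac{dL}{dt}$ is generally nonzero for general contact angles, by \eqref{eq:dtLGamma}. If it cannot be absorbed, the step must be justified by restricting the identity to the boundary quantities used later, namely $\partial_s^j k$ together with the conditions $\partial_s^{2j-1}k(\pm1,t)=0$. For those quantities the extra term multiplies a vanishing factor, so the two orders of differentiation agree.
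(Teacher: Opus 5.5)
Your proposal does not prove the statement, but no proof is possible: the obstacle you isolate is real, because the identity as stated is false. Your computations are correct. Extend Lemma~\ref{lem:commutator} to the end points by continuity and use $\partial_s G_m[k] = kF_m[k] + \frac{1}{L[\gamma]}\frac{d}{dt}L[\gamma]$ (equivalently Lemma~\ref{lem:u}). This gives $(\partial_t\partial_s - \partial_s\partial_t)f = -\frac{1}{L[\gamma]}\frac{d}{dt}L[\gamma]\,\partial_s f$ on all of $[-1,1]$. Now take $f=\gamma$ at $u=-1$. The angle condition gives $\partial_t\partial_s\gamma(-1,t) = \partial_t\btau(-1,t) = 0$, whereas your formula gives $\partial_s\partial_t\gamma(-1,t) = \frac{1}{L[\gamma]}\frac{d}{dt}L[\gamma]\,\bv_{\theta_-}$. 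The lemma allows $\theta_\pm = \pi/2$, and in that case \eqref{eq:dtLGamma} together with the integration by parts of Lemma~\ref{lem:dtEdtA} gives $\frac{d}{dt}L[\gamma] = -\|\partial_s^m(k-\bar k)\|_2^2$. This is strictly negative unless $k(\cdot,t)$ is constant. So no combination of \eqref{eq:u1} and \eqref{eq:pm1Gamma} can absorb the term; \eqref{eq:u1} is exactly what produces it.

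The paper's proof reaches the opposite conclusion through an invalid step. It evaluates $\partial_s\partial_t\gamma(-1,t)$ as $-\frac{\partial_s F_m[k](-1,t)}{\sin\theta_-}\bv_0$, i.e.\ it differentiates \eqref{eq:pm1Gamma} in $s$. That identity holds only at the end point, not on a neighbourhood of it, so it cannot be differentiated along the curve. This produces the coefficient $\partial_s F_m[k]\cot\theta_-$ in place of your correct $-\frac{1}{L[\gamma]}\frac{d}{dt}L[\gamma]$. The same substitution in the angle condition then produces $\partial_s F_m[k](\pm1,t)=0$. The correct consequence of the angle condition is the one you wrote, $\partial_s F_m[k] + kG_m[k] = 0$ at $u=\pm1$. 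Equivalently, $\partial_s^{2m+1}k = k\,\partial_s^{2m}(k-\bar k)\cot\theta_\pm$ there, which need not vanish when $\theta_\pm\neq\pi/2$.

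Your fallback is the right salvage: at $u=\pm1$ the two orders of differentiation agree on $f$ exactly when $\frac{d}{dt}L[\gamma]\,\partial_s f(\pm1,t)=0$. It is, however, narrower than you suggest. Among $f=\partial_s^j k$ it covers only even $j\le 2m-2$, not odd $j$ and not $j=2m$. It also does not cover $f=\gamma$ or $f=\btau$. So it would not justify the later uses in Lemmas~\ref{lem:pm1kt} and~\ref{lem:pm1kslt}, or the case $l=m$ of Lemma~\ref{lem:ks2l+1}.
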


\begin{proof}
For a function $f$, we observe from
$\partial_s = \frac{\partial_u}{|\partial_u \gamma|}$
and \eqref{eq:pm1Gamma} that
\begin{align*}
\partial_t \partial_s f (-1, t)
&=
\partial_t
\left(
\frac{\partial_u f (-1, t)}
{|\partial_u \gamma (-1, t)|}
\right)\\
&=
\partial_s \partial_t f (-1, t)
-
\langle
\partial_s \partial_t \gamma (-1, t),
{\bm \tau} (-1, t)
\rangle
\partial_s f (-1, t)\\
&=
\partial_s \partial_t f (-1, t)
+
\partial_s F_m[k] (-1, t)
\cot \theta_-
\partial_s f (-1, t).
\end{align*}
Similarly, we have
\begin{align*}
\partial_t \partial_s f (1, t)
=
\partial_s \partial_t f (1, t)
+
\partial_s F_m[k] (1, t)
\cot \theta_+
\partial_s f (1, t).
\end{align*}
This together with the boundary conditions of \eqref{MP} and \eqref{eq:pm1Gamma} implies that
\begin{align*}
0
&=
\partial_t
\langle
\partial_s \gamma (-1, t),
{\bm v}_{\theta_- + \frac{\pi}{2}}
\rangle\\
&=
\left\langle
-
\frac{\partial_s F_m[k] (-1, t)}
{\sin \theta_-}
{\bm v}_0
+
\partial_s F_m[k] (-1, t)
\cot \theta_-
\partial_s \gamma (-1, t),
{\bm v}_{\theta_- + \frac{\pi}{2}}
\right\rangle\\
&=
\partial_s F_m[k] (-1, t).
\end{align*}
Similarly, we have
\[
\partial_s F_m[k](1, t)=0.
\]
This completes the proof.
\end{proof}


Next, we derive the equation of $k$. For $(u, t) \in (-1, 1) \times [0, T)$, see Lemma \ref{lem:ptk}. When $u=\pm1$ and $t\in[0,T)$, Lemma \ref{lem:pm1commutator} yields the following:


\begin{lemma}\label{lem:pm1kt}
Let $m \in \N \cup \{0\}$, $\theta \in (0,\pi]$ and $\theta_-, \theta_+ \in (0,2\pi)\setminus\{\pi\}$. Assume $\gamma:(-1,1)\times[0,T)\to\R^2$ is a solution to \eqref{MP}. Then the following holds$:$
\begin{align*}
\partial_t k(\pm1,t)
=
\partial_s^2F_m[k](\pm1,t)
\quad
\text{for all}
\quad
t\in[0,T).
\end{align*}
\end{lemma}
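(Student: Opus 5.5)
The plan is to compute $\partial_t k(\pm1,t)$ directly from $k=\la \partial_s \btau, \bnu\ra$, using the boundary commutator rule of Lemma~\ref{lem:pm1commutator} in place of Lemma~\ref{lem:commutator}. I will treat the end point $u=-1$; the end point $u=1$ is identical.

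\textbf{Step 1: differentiate the tangent.} The boundary condition $\btau(-1,t)={\bm v}_{\theta_-}$ is fixed in time, so $\partial_t\btau(-1,t)=0$. Equivalently, by Lemma~\ref{lem:pm1commutator},
\[
\partial_t\btau=\partial_s\partial_t\gamma \quad \text{at } u=-1.
\]
This gives
\[
(\partial_s F_m[k]+kG_m[k])(-1,t)=0.
\]
This identity will be used below.

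\textbf{Step 2: differentiate the curvature.} By Lemma~\ref{lem:pm1commutator}, at $u=-1$ we have
\[
\partial_t k=\la \partial_t\partial_s\btau,\bnu\ra+\la\partial_s\btau,\partial_t\bnu\ra
=\la \partial_s\partial_t\btau,\bnu\ra ,
\]
since $\partial_t\bnu(-1,t)=0$ as well. The difficulty is that $\partial_t\btau$ vanishes only at the end point, so its $s$-derivative there is not obviously zero.

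\textbf{Step 3: evaluate $\partial_s\partial_t\btau$ at the boundary.} By continuity of smooth solutions up to the boundary, I use the interior formula \eqref{eq:pttau}, $\partial_t\btau=(\partial_sF+kG)\bnu$, extended to $u=-1$. Then
\[
\la\partial_s\partial_t\btau,\bnu\ra=\partial_s(\partial_sF+kG)=\partial_s^2F+\partial_skG+k\partial_sG .
\]
Equivalently, \eqref{eq:ptk} extends by continuity to give
\[
\partial_tk=\partial_s^2F+k^2F+\partial_sk\,G.
\]
The two expressions differ by $k(\partial_sG-kF)$, which is exactly the commutator term. So the claim reduces to showing that $\partial_s k\,G=0$ at $u=-1$ and that the commutator term vanishes there.

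\textbf{Step 4: the vanishing terms.} For the commutator term, $\partial_s G-kF=\la\partial_s\partial_t\gamma,\btau\ra$. By \eqref{eq:u1}, $|\partial_u\gamma|$ is spatially constant, so $\la \partial_s\partial_t\gamma,\btau\ra=\partial_t|\partial_u\gamma|/|\partial_u\gamma|$ is independent of $u$. On the other hand, Lemma~\ref{lem:pm1commutator} states that $\partial_t\partial_s=\partial_s\partial_t$ at the boundary, which means this term is zero there. For $m\ge1$, the boundary condition $\partial_sk(\pm1,t)=0$ gives $\partial_skG=0$.

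\textbf{Main obstacle: the case $m=0$.} Here $\partial_s k$ need not vanish at the boundary. I would instead combine $\partial_sF=\partial_sk$ with Step 1, $\partial_s k=-kG$, and show that the remaining terms cancel against $k\,\partial_sG$, again using the constant-speed structure and \eqref{eq:alphapm1}. This is the step I expect to be hardest, and the one to verify most carefully.
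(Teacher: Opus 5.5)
Your Step 3 contains the only end-point identity your argument actually justifies. Since the solution is smooth up to $u=\pm1$, \eqref{eq:ptk} extends by continuity and gives
\[
\partial_t k(\pm1,t)=\bigl(\partial_s^2F_m[k]+k^2F_m[k]+\partial_s k\,G_m[k]\bigr)(\pm1,t).
\]
So the lemma is equivalent to $k^2F_m[k]+\partial_sk\,G_m[k]=0$ at the end points. Your reduction loses a term here: even if $\partial_sk\,G_m[k]$ and the commutator term both vanished, you would get $\partial_s^2F_m[k]+k^2F_m[k]$, not $\partial_s^2F_m[k]$.

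Worse, the commutator term does not vanish. You observe yourself that $\partial_sG_m[k]-kF_m[k]=\partial_t|\partial_u\gamma|/|\partial_u\gamma|$ is independent of $u$, and by Lemma~\ref{lem:u} it equals $\frac{d}{dt}\log L[\gamma]$. If it were zero at $u=-1$, it would be zero everywhere, so $L[\gamma]$ would be constant in time. That contradicts \eqref{eq:dtLGamma}: for $\theta_\pm=\pi/2$, for example, $\frac{d}{dt}L[\gamma]=-\|\partial_s^m(k-\bar k)\|_2^2<0$ unless the curvature is constant. At the end points one really has $\partial_t\partial_s-\partial_s\partial_t=-\frac{1}{L[\gamma]}\frac{d}{dt}L[\gamma]\,\partial_s$. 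Hence Lemma~\ref{lem:pm1commutator} cannot be used in Steps 2 and 4. Your Step 1 conclusion $(\partial_sF_m[k]+kG_m[k])(\pm1,t)=0$ is correct, but it follows from $\partial_t\btau(\pm1,t)=0$ and \eqref{eq:pttau}. It does not follow from Lemma~\ref{lem:pm1commutator}, whose tangential component would again force $\frac{d}{dt}L[\gamma]=0$.

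No argument can close this gap, because the identity itself fails in general. Using $G_m[k](\pm1,t)=-F_m[k](\pm1,t)\cot\theta_\pm$, the required condition becomes $F_m[k]\,(k^2-\partial_sk\cot\theta_\pm)=0$ at $u=\pm1$.
\begin{itemize}
\item For $m\ge1$, where $\partial_sk(\pm1,t)=0$ is imposed, this says $k^2F_m[k](\pm1,t)=0$.
\item For $m=0$, your relation $\partial_sk=-kG_0[k]=k(k-\bar k)\cot\theta_\pm$ turns it into $k(k-\bar k)\bigl(k-(k-\bar k)\cot^2\theta_\pm\bigr)=0$ at the end points.
\end{itemize}
Neither holds for generic data, since nothing constrains $k$ or $F_m[k]$ at the end points. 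Already for right angles the condition is just $k^2F_m[k](\pm1,t)=0$.

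The paper's own proof has the same defect. Besides invoking Lemma~\ref{lem:pm1commutator}, it replaces $\partial_s^2\partial_t\gamma(-1,t)$ by $-\frac{\partial_s^2F_m[k](-1,t)}{\sin\theta_-}{\bm v}_0$. That is, it differentiates in $s$ the relation \eqref{eq:pm1Gamma}, which holds only at the single point $u=-1$.

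Your $m=0$ observation is correct and generalizes. At the end points $\partial_sF_m[k](\pm1,t)=kF_m[k]\cot\theta_\pm$, which is nonzero in general for $\theta_\pm\neq\pi/2$. This contradicts what is asserted in the proof of Lemma~\ref{lem:pm1commutator}.
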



\begin{proof}
From \eqref{FS}, \eqref{eq:pm1Gamma}, and Lemma \ref{lem:pm1commutator}, we have
\begin{align*}
\partial_t k(-1,t)
&=
\partial_t
\left\langle
\partial_s^2\gamma(-1,t),
{\bm\nu}_{\theta_-+\frac{\pi}{2}}
\right\rangle
=
\left\langle
-
\frac{\partial_s^2F_m[k](-1,t)}
{\sin\theta_-}
{\bm v}_0,
{\bm v}_{\theta_-+\frac{\pi}{2}}
\right\rangle\\
&=
\partial_s^2F_m[k](-1,t)
\end{align*}
for all $t\in[0,T)$. Similarly, we have
\[
\partial_t k(1,t)
=
\partial_s^2F_m[k](1,t).
\]
This completes the proof.
\end{proof}

From the definition of $G_m[k]$, if the initial curve satisfies \eqref{A1}, then the following lemma holds:
\begin{lemma}\label{lem:u}
Let $m \in \N \cup \{ 0 \}$, $\theta \in (0, \pi]$ and $\theta_-, \theta_+ \in (0, 2\pi) \setminus \{ \pi \}$. Assume $\gamma : (-1,1) \times [0, T) \to \R^2$ is a smooth regular solution to $\eqref{MP}$ with initial curve $\gamma_0$ satisfying \eqref{ACF:BC} and \eqref{A1}. Then $\gamma$ satisfies
  \begin{align}\label{eq:u}
   |\partial_u \gamma| = \frac{L[\gamma]}{2} \quad \text{for all} \quad (u, t) \in (-1, 1) \times [0, T).
  \end{align}
\end{lemma}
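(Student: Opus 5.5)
We will show that the ratio $\rho(u,t) := |\partial_u \gamma(u,t)| / L[\gamma(t)]$ is independent of $t$. Then \eqref{eq:u} follows from \eqref{A1}, since $\rho(u,0) = 1/2$. The idea is that $G_m[k]$ in \eqref{def:Gmk} was built exactly so that the local stretching rate $-kF_m[k] + \partial_s G_m[k]$ is spatially constant and equals the relative growth rate of the length.

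\textbf{Step 1: the stretching rate of $|\partial_u\gamma|$.} Equation \eqref{MP} has the form \eqref{GF} with $F = F_m[k]$ and $G = G_m[k]$, and the boundary conditions \eqref{BCofGF} hold. So \eqref{eq:commutator3} in the proof of Lemma~\ref{lem:commutator} gives
\[
\partial_t |\partial_u \gamma| = \bigl(-kF_m[k] + \partial_s G_m[k]\bigr)|\partial_u\gamma|.
\]
We differentiate \eqref{def:Gmk} with respect to $s$. The terms $\int_0^s kF_m[k]\,d\tilde s$ and $-F_m[k](-1,t)\cot\theta_-$ contribute $kF_m[k]$ and $0$ respectively. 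Hence
\[
-kF_m[k] + \partial_s G_m[k] = \frac{1}{L[\gamma]}\Bigl(-\int_\gamma kF_m[k]\,ds - F_m[k](1,t)\cot\theta_+ + F_m[k](-1,t)\cot\theta_-\Bigr) =: \Lambda(t),
\]
which depends only on $t$.

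\textbf{Step 2: the evolution of $L[\gamma]$.} Integrating \eqref{eq:commutator1} over the curve gives
\[
\frac{d}{dt}L[\gamma] = \int_{-1}^1 \Lambda(t)|\partial_u\gamma|\,du = \Lambda(t)L[\gamma].
\]
This step does not reuse \eqref{eq:dtLGamma}, whose derivation assumed \eqref{eq:u1}. We differentiate $L$ directly from \eqref{eq:commutator1}, which holds for any smooth solution. Only smoothness up to the boundary is needed, so that differentiation under the integral is justified.

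\textbf{Step 3: conclusion.} Combining Steps 1 and 2,
\[
\partial_t \rho = \frac{\Lambda|\partial_u\gamma|L - |\partial_u\gamma|\Lambda L}{L^2} = 0.
\]
Therefore $\rho(u,t) = \rho(u,0) = 1/2$ for all $(u,t)$, which is \eqref{eq:u}.

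The main point to check is the circularity concern. The formula for $G_m[k]$ was derived under the assumption \eqref{eq:u1}, so we must verify that its definition uses only quantities meaningful for any regular curve: the arclength $s$, the length $L[\gamma]$, and boundary values. Once this is confirmed, Step 1 is a pure identity and the argument is not circular. We also need $L[\gamma] > 0$, which holds by regularity of the curve.
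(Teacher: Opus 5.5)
Your proof is correct and follows essentially the same route as the paper: both use \eqref{eq:commutator3} and the definition \eqref{def:Gmk} to see that the stretching rate $-kF_m[k]+\partial_s G_m[k]$ depends on $t$ alone and equals $\frac{d}{dt}L[\gamma]/L[\gamma]$, and then integrate in time (the paper via $\partial_t \log|\partial_u\gamma| = \frac{d}{dt}\log L[\gamma]$, you via the ratio $|\partial_u\gamma|/L[\gamma]$). The only difference is that you get $\frac{d}{dt}L[\gamma]=\Lambda(t)L[\gamma]$ by integrating \eqref{eq:commutator1} instead of quoting \eqref{eq:dtLGamma}; this is self-contained, but your circularity worry is unnecessary, since \eqref{eq:dtLGamma} uses only the end-point values $G_m[k](\pm1,t)=-F_m[k](\pm1,t)\cot\theta_\pm$ and not \eqref{eq:u1}.
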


\begin{proof}
From \eqref{def:Gmk} and \eqref{eq:dtLGamma}, we have
  \begin{align*}
    \partial_t |\partial_u \gamma|
    &= \frac{\langle \partial_u \gamma, \partial_u \partial_t \gamma \rangle}{|\partial_u \gamma|}
     = (- k F_m[k] + \partial_s G_m[k]) |\partial_u \gamma|
     = \frac{\frac{d}{dt}L[\gamma]}{L[\gamma]} |\partial_u \gamma|,
  \end{align*}
for all $(u, t) \in (-1, 1) \times [0, T)$. Since $\gamma$ is a regular curve, by dividing both sides by $|\partial_u \gamma (u, t)|$, we have
  \begin{align*}
     \frac{\partial_t|\partial_u \gamma|}{|\partial_u \gamma|}
     = \frac{\frac{d}{dt}L[\gamma]}{L[\gamma]},
  \end{align*}
and then,
  \begin{align*}
   \partial_t \log |\partial_u \gamma|
   = \frac{d}{dt} \log L[\gamma],
  \end{align*}
for all $(u, t) \in (-1, 1) \times [0, T)$. Integrating both sides, we have
  \begin{align*}
    \log |\partial_u \gamma|
    &= \log |\partial_u \gamma_0| + \log L[\gamma] - \log L[\gamma_0]\\
    &= \log L[\gamma] + \log \frac{|\partial_u \gamma_0|}{L[\gamma_0]}\\
    &= \log L[\gamma] - \log 2
    = \log \frac{L[\gamma]}{2}.
  \end{align*}
Thus \eqref{eq:u} holds in $(-1, 1) \times [0, T)$. This completes the proof.
\end{proof}


Now, we recall the definition of $E[\gamma]$:

\begin{align*}
E[\gamma]
&=
L[\gamma]
-
\langle \gamma (1), {\bm v}_{\theta + \theta_+} \rangle
+
\langle \gamma (-1), {\bm v}_{\theta_-} \rangle.
\end{align*}
The equation of \eqref{MP} is the $H^{-m}$-gradient flow for $E[\gamma]$ under the boundary conditions of \eqref{MP}. We prove that $E[\gamma]$ is non-increasing and $A[\gamma]$ is preserved for solutions $\gamma$ to the problem \eqref{MP}.

\begin{lemma}\label{lem:dtEdtA}
 Let $m \in \N \cup \{ 0 \}$, $\theta \in (0, \pi]$ and $\theta_-, \theta_+ \in (0, 2\pi) \setminus \{ \pi \}$. Assume $\gamma : (-1,1) \times [0, T) \to \R^2$ is a solution to $\eqref{MP}$. Then,
  \begin{align*}
    \frac{d}{dt} E[\gamma] = - \|\partial_s^m (k - \bar{k}) \|_2^2 \quad \text{and} \quad
    \frac{d}{dt} A[\gamma] = 0 \quad \text{for all} \quad t \in [0, T).
  \end{align*}
\end{lemma}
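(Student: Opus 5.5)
We compute both derivatives directly from the evolution \eqref{MP}, writing $F:=F_m[k]$ and $G:=G_m[k]$, and using Lemma~\ref{lem:commutator}, Remark~\ref{rem:pm1Gamma} and the boundary conditions.

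\emph{Energy.} By \eqref{eq:dtLGamma},
\[
\frac{d}{dt}L[\gamma]=-\int_\gamma kF\,ds - F(1,t)\cot\theta_+ + F(-1,t)\cot\theta_-.
\]
By \eqref{eq:pm1Gamma},
\[
\frac{d}{dt}\langle\gamma(1),\bv_{\theta+\theta_+}\rangle=-\frac{F(1,t)}{\sin\theta_+}\langle\bv_\theta,\bv_{\theta+\theta_+}\rangle=-F(1,t)\cot\theta_+,
\]
and similarly
\[
\frac{d}{dt}\langle\gamma(-1),\bv_{\theta_-}\rangle=-F(-1,t)\cot\theta_-.
\]
Inserting these into $E$, the boundary terms cancel, and we get
\[
\frac{d}{dt}E[\gamma]=-\int_\gamma kF\,ds=-\int_\gamma (k-\bar k)F\,ds,
\]
where the last equality will follow once we show $\int_\gamma F\,ds=0$. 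For $m=0$ this already gives $-\|k-\bar k\|_2^2$. For $m\ge1$, set $w:=k-\bar k$, so that $F=(-1)^m\partial_s^{2m}w$. We integrate by parts $m$ times, moving $m$ derivatives onto $w$. The boundary terms have the form $\partial_s^{a}w\,\partial_s^{b}w$ with $a+b=2m-1$, so exactly one of $a,b$ is odd, and that index is at most $2m-1$. By the condition $\partial_s^{2j-1}k(\pm1)=0$ for $j\le m$, each such term vanishes. The result is $-\|\partial_s^m w\|_2^2$. The main thing to check here is this parity bookkeeping, which should be routine.

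\emph{Area.} The first variation of $A=-\frac12\int\langle\gamma,\bnu\rangle ds$ under $\partial_t\gamma=F\bnu+G\btau$ is standard: the interior contributes $-\int_\gamma F\,ds$, and there are boundary terms $\pm\frac12\langle\gamma,\cdot\rangle$ coming from the endpoint motion. I would compute directly using Lemmas~\ref{lem:commutator} and \ref{lem:ptk}:
\[
\partial_t(\langle\gamma,\bnu\rangle ds)=\bigl(\langle F\bnu+G\btau,\bnu\rangle-(\partial_sF+kG)\langle\gamma,\btau\rangle+\langle\gamma,\bnu\rangle(-kF+\partial_sG)\bigr)ds.
\]
Writing $\partial_s\langle\gamma,\btau\rangle=1+k\langle\gamma,\bnu\rangle$ and $\partial_s\langle\gamma,\bnu\rangle=-k\langle\gamma,\btau\rangle$, the integrand becomes
\[
2F-\partial_s\bigl(F\langle\gamma,\btau\rangle-G\langle\gamma,\bnu\rangle\bigr).
\]
Hence
\[
\frac{d}{dt}A=-\int_\gamma F\,ds+\tfrac12\Bigl[F\langle\gamma,\btau\rangle-G\langle\gamma,\bnu\rangle\Bigr]_{-1}^{1}.
\]
At $u=-1$, $\gamma=\lambda\bv_0$ and $G=-F\cot\theta_-$ by \eqref{eq:alphapm1}. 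Then
\[
F\lambda\cos\theta_-+F\cot\theta_-\lambda(-\sin\theta_-)=0,
\]
and the case $u=1$ is the same after rotating by $\theta$. So the boundary term vanishes and $\frac{d}{dt}A=-\int_\gamma F\,ds$.

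It remains to show $\int_\gamma F\,ds=0$. For $m=0$ this is $\int(k-\bar k)=0$ by the definition of $\bar k$. For $m\ge1$,
\[
\int_\gamma F\,ds=\pm[\partial_s^{2m-1}k]_{-1}^{1}=0
\]
by the odd-derivative boundary condition with $j=m$. This also justifies replacing $k$ by $k-\bar k$ in the energy computation. The only delicate point is getting the sign and factor bookkeeping in the area variation right, so I would double-check it against the closed-curve case, where the boundary terms are absent.
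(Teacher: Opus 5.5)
Your proposal is correct and follows the paper's proof. You obtain $\frac{d}{dt}E[\gamma]$ from \eqref{eq:dtLGamma} and the endpoint velocities \eqref{eq:pm1Gamma}, and $\frac{d}{dt}A[\gamma]$ by differentiating directly with Lemmas~\ref{lem:commutator} and~\ref{lem:ptk}, integrating by parts, and using $F_m[k]\langle\gamma,\btau\rangle - G_m[k]\langle\gamma,\bnu\rangle = 0$ at $u=\pm1$. You also supply steps the paper leaves implicit: the parity bookkeeping in the $m$-fold integration by parts, and $\int_\gamma F_m[k]\,ds=0$ via $\partial_s^{2m-1}k(\pm1,t)=0$. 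Your interior coefficient $-\int_\gamma F_m[k]\,ds$ is the right one; the paper's $-\tfrac12$ is a slip, harmless because this integral vanishes.
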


\begin{proof}
From \eqref{eq:pm1Gamma}, we compute $\frac{d}{dt} \langle \gamma(- 1, t), {\bm v}_{\theta_-} \rangle$ and obtain
  \begin{align*}
    \frac{d}{dt} \langle \gamma(- 1, t), {\bm v}_{\theta_-} \rangle
      &=  \left\langle - \frac{F_m[k] (-1, t)}{\sin \theta_-} {\bm v}_0, {\bm v}_{\theta_-} \right\rangle\\
      &= - F_m[k](- 1, t) \cot \theta_-.
  \end{align*}
Similarly, we have $\frac{d}{dt} \langle \gamma (1, t), {\bm v}_{\theta + \theta_+} \rangle = - F_m [k] (1, t) \cot \theta_+$. This together with the boundary conditions of \eqref{MP} and \eqref{eq:dtLGamma} implies that
\begin{align*}
\frac{d}{dt} E[\gamma]
&=
\frac{d}{dt}
\left(
L[\gamma]
-
\langle \gamma (1, t), {\bm v}_{\theta + \theta_+} \rangle
+
\langle \gamma(-1, t), {\bm v}_{\theta_-} \rangle
\right)\\
&=
-\int_\gamma k F_m[k] \, ds
=
- \|\partial_s^m (k-\bar{k}) \|_2^2.
\end{align*}

Furthermore, the boundary conditions of \eqref{MP} and Lemmas \ref{lem:commutator} and \ref{lem:ptk} imply that

\begin{align*}
\frac{d}{dt}A[\gamma]
&=
-\frac12
\int_\gamma
\Big(
\langle \partial_t\gamma,{\bm\nu}\rangle
+
\langle \gamma,\partial_t{\bm\nu}\rangle
+
\langle \gamma,{\bm\nu}\rangle
(-kF_m[k]+\partial_sG_m[k])
\Big)\,ds\\
&=
-\frac12\int_\gamma F_m[k]\,ds
+\frac12\int_\gamma
\left(
\partial_sF_m[k]\langle\gamma,{\bm\tau}\rangle
\right)\,ds\\
&\quad
+\frac12\int_\gamma
\left(
G_m[k]\langle\gamma,k{\bm\tau}\rangle
\right)\,ds
+\frac12\int_\gamma
\left(
F_m[k]\langle\gamma,k{\bm\nu}\rangle
\right)\,ds\\
&\quad -\frac12\int_\gamma
\left(
\partial_sG_m[k]\langle\gamma,{\bm\nu}\rangle
\right)\,ds\\
&=
-\frac12\int_\gamma F_m[k]\,ds
+\frac12
\left[
F_m[k]\langle\gamma,{\bm\tau}\rangle
\right]_{-1}^1
-\frac12
\left[
G_m[k]\langle\gamma,{\bm\nu}\rangle
\right]_{-1}^1\\
&=
\frac{1}{2}
\left(
F_m[k](1,t)|\gamma(1,t)|\cos\theta_+
-
F_m[k](-1,t)|\gamma(-1,t)|\cos\theta_-
\right)\\
&\quad
-\frac{1}{2}
\left(
G_m[k](1,t)|\gamma(1,t)|\sin\theta_+
-
G_m[k](-1,t)|\gamma(-1,t)|\sin\theta_-
\right)\\
&=0.
\end{align*}

This completes the proof.
\end{proof}


\begin{remark}
From Lemma \ref{lem:dtEdtA}, the solution $\gamma$ to \eqref{MP} preserves $A[\gamma]$. Thus, if we assume $A[\gamma_0] > 0$, from Lemma \ref{lem:Lbdd}, we obtain the following uniform lower bound for $L[\gamma]$:
\begin{align}\label{eq:Llowerbdd}
L[\gamma]^2 \ge 2\theta A[\gamma_0] >0
\quad
\text{in}
\quad
[0,T).
\end{align}
\end{remark}

\section{Asymptotic behavior of \eqref{MP}}\label{asy}

\subsection{Local-in-time solvability}\label{loc}
In this section, we prove the existence and uniqueness of a local-in-time solution to \eqref{MP}.  In Shimojo--Kagaya \cite{SK_2016}, the existence and uniqueness of a local-in-time solution to \eqref{MP} with a concave graph $\gamma_0$ as initial data were studied when $m = 0$. Kagaya \cite{K_2020} subsequently extended the argument to the setting without the assumption that the initial curve $\gamma_0$ is a graph. We prove the local-in-time existence of solutions to \eqref{MP} by an argument similar to theirs.
Assume $\gamma_0 \in C^{2m + 3}((-1, 1); \R^2)$ satisfies \eqref{ACF:BC} and \eqref{A1}. We define $\Theta_0 \in C^{2m + 2}((-1, 1))$ by
  \begin{align*}
   {\bm \tau}_0 (u) = \partial_s \gamma_0 (u) &= (\cos \Theta_0 (u), \sin \Theta_0 (u))
  \end{align*}
and prove the local-in-time solvability of the following problem and uniqueness of the solution:
  \begin{align}\label{prob:Thetaztau}
    \begin{cases}
      \partial_\tau \Theta(z, \tau) = (-1)^m \partial_z^{2m +2} \Theta(z, \tau)\\
        \quad \quad \quad  + \left( P[\Theta](z, \tau) + Q[\Theta](\tau) z \right) \partial_z \Theta (z, \tau) &\quad \text{for} \quad0 < z < 1, \quad 0< \tau,\\
       \Theta(0, \tau) = \theta_-, \quad \Theta(1, \tau) = \theta + \theta_+, &\quad \text{for} \quad 0< \tau,\\
       \partial_z^{2j} \Theta (0, \tau) = \partial_z^{2j} \Theta (1, \tau) = 0 &\quad \text{for} \quad j \in \{ 1, 2, 3, \dots, m \}\\
       \quad &\quad \quad \text{and} \quad 0< \tau \quad \text{if} \quad m \geq 1,\\
      \Theta(z, 0) = \Theta_0 (z) &\quad \text{for} \quad 0< z < 1,
    \end{cases}
  \end{align}
where the parameter $z$ satisfies
 \begin{align*}
    z = z(u) = \frac{u + 1}{2},
  \end{align*}
and $P[\Theta]$ and $Q[\Theta]$ are defined by
  \begin{align*}
    P[\Theta] (z, \tau) &:= (-1)^m \int_0^z \partial_{\tilde{z}} \Theta \partial_{\tilde{z}}^{2m} ( \partial_{\tilde{z}} \Theta - \omega ) \, d \tilde{z}\\
                        & \quad - (-1)^m \left[ \partial_z^{2m} (\partial_z \Theta - \omega ) \right](0, \tau) \cot \theta_-,\\
    Q[\Theta] (\tau) &:= (-1)^{m+1} \int_0^1 \partial_{\tilde{z}} \Theta \partial_{\tilde{z}}^{2m} ( \partial_{\tilde{z}}  \Theta - \omega ) \, d \tilde{z}\\
                   &\quad + (-1)^{m+1} \left[ \partial_z^{2m} (\partial_z \Theta - \omega ) \right](1, \tau) \cot \theta_+\\   
                    &\quad -  (-1)^{m+1} \left[ \partial_z^{2m} (\partial_z \Theta - \omega ) \right](0, \tau) \cot \theta_-.
  \end{align*}
Here, from \eqref{ACF:BC}, $\Theta_0$ satisfies the boundary conditions of \eqref{prob:Thetaztau}. Once we obtain the smooth solution $\Theta$, we can construct the solution to \eqref{MP}. 
We consider the following ODE:
  \begin{align}
    \begin{cases}\label{eq:g}
      \frac{d}{d \tau} h(\tau) &= Q[\Theta] (\tau) \quad \text{for} \quad 0 < \tau,\\
      h(0) &= \log L[\gamma_0].
    \end{cases}
  \end{align}
Using the solution $h (\tau)$ to \eqref{eq:g}, we define $ \mathcal{L}$ by
  \begin{align*}
    \mathcal{L}(\tau) := e^{h(\tau)}.
  \end{align*}
Thus, we construct the solution $\gamma = (\gamma_1, \gamma_2)$ to \eqref{MP} as follows:
  \begin{equation*}
  \begin{aligned}
    \gamma_1 (u, t) &= \gamma_1(-1, 0) 
                                   + (-1)^m \frac{1}{\sin \theta_-} \int_0^t \left( \frac{2}{\mathcal{L}} \right)^{2m} \partial_u^{2m} \left( \frac{2}{\mathcal{L}}\partial_u \Theta(-1, \tilde{t}) - \frac{\omega}{\mathcal{L}} \right) \, d\tilde{t}\\
                                  &\quad + \frac{\mathcal{L}(t)}{2} \int_{-1}^u \cos \Theta \, d\tilde{u},\\
    \gamma_2 (u, t) &= \frac{\mathcal{L}}{2} \int_{-1}^u \sin \Theta \, d\tilde{u},
   \end{aligned}
   \end{equation*}
where $z$ and $\tau$ are related to $u$ and $t$ by
  \begin{align*}
    t = t(\tau) = \int_0^{\tau} e^{(2m + 2) h(\tilde{\tau})} \, d \tilde{\tau},\qquad
   u = u(z) = 2z -1.
  \end{align*}
It follows that $\gamma$ is a solution to \eqref{MP} satisfying $L[\gamma] = \mathcal{L}$ and \eqref{eq:u} in $[0, T)$.
Before proving the local-in-time existence and uniqueness of \eqref{prob:Thetaztau}, we outline the derivation of \eqref{prob:Thetaztau} in Remark \ref{rem:derivation}. We then establish its local-in-time existence and uniqueness. 
\begin{remark}\label{rem:derivation}
We explain the derivation of \eqref{prob:Thetaztau}. Let $\Theta : (-1, 1) \times [0, T) \to \mathbb{R}$ be a smooth angle function. Assume $\gamma : (-1, 1) \times [0, T) \to \mathbb{R}^2$ is the solution to \eqref{MP} with initial curve $\gamma_0 : (-1, 1) \to \mathbb{R}^2$, the tangential vector of $\gamma$ is given by
  \begin{align*}
     {\bm \tau} (u, t) &:= (\cos \Theta (u, t), \sin \Theta (u, t)) \quad \text{for} \quad (u, t) \in (-1, 1) \times [0, T),
  \end{align*}
and $\gamma$ satisfies \eqref{eq:u}.
Thus $\gamma$ and $\Theta$ satisfy
  \begin{align*}
    \partial_s = \frac{1}{|\partial_u \gamma|} \partial_u = \frac{2}{L[\gamma]} \partial_u,\qquad
    \partial_s {\bm \tau} =  \partial_s \Theta {\bm \nu}.
  \end{align*}
Thus, from \eqref{FS}, we have
  \begin{align*}
       \partial_s \Theta &= k.
   \end{align*}
This together with Lemma \ref{lem:ptk} implies that $\Theta : (0, L[\gamma]) \times [0, T) \to \mathbb{R}$ satisfies
  \begin{align*}
   \begin{cases}\label{eq:Theta}
    \partial_t \Theta = (-1)^m \partial_s^{2 m + 2} \Theta\\
    \quad \quad \quad +  \left( P[\Theta] + \frac{s}{L[\gamma]} Q[\Theta] \right) \partial_s \Theta
       \quad &\text{in} \quad (0, L[\gamma]) \times (0, T),\\
    \Theta(0, t) = \theta_-, \quad \Theta(L[\gamma], t) = \theta + \theta_+, \quad &\text{in} \quad (0, T),\\
    \partial_s^{2j} \Theta(0, t) = \partial_s^{2j} \Theta(L[\gamma], t) = 0\\
    \quad \quad \quad \quad \quad \quad \quad \text{for} \quad j \in \{ 1, 2, 3, \dots, m \} \quad &\text{in} \quad (0, T) \quad \text{if} \quad m \geq 1,\\
    \Theta(s, 0) = \Theta_0(s) \quad &\text{in} \quad (0, L[\gamma]),
   \end{cases}
  \end{align*}
with the following free boundary condition:
  \begin{align*}
     \frac{d}{dt} L[\gamma] = Q[\Theta] \quad \text{in} \quad [0, T),
  \end{align*}
where $\Theta_0$ satisfies $\partial_s \gamma_0 = (\cos \Theta_0 , \sin \Theta_0)$ and $P[\Theta]$ and $Q[\Theta]$ are given by
 \begin{equation*}
  \begin{aligned}
    P[\Theta] (s, t) 
      &:= (-1)^m \int_0^s \partial_{\tilde{s}} \Theta \partial_{\tilde{s}}^{2m} (\partial_{\tilde{s}} \Theta - \frac{\omega}{L[\gamma]}) \, d\tilde{s}\\
         &\quad - (-1)^m \partial_s^{2m} (\partial_s \Theta(0, t) - \frac{\omega}{L[\gamma]}) \cot \theta_-,\\
    Q[\Theta] (t) 
      &:= - (-1)^m \int_\gamma \partial_s \Theta \partial_s^{2m} (\partial_s \Theta - \frac{\omega}{L[\gamma]}) \, ds\\
      &\quad - (-1)^m \partial_s^{2m} (\partial_s \Theta (L[\gamma], t) - \frac{\omega}{L[\gamma]}) \cot \theta_+\\
      &\quad + (-1)^m \partial_s^{2m} (\partial_s \Theta (0, t) - \frac{\omega}{L[\gamma]}) \cot \theta_-.
  \end{aligned}
 \end{equation*}
Finally, we normalize the interval $(0, L[\gamma])$ to $(0, 1)$ by the following transformation:
  \begin{align*}
    \tau = \tau(t) = \int_0^t \frac{1}{L[\gamma]^{2m + 2}} \, d\tilde{t}, \qquad
    z = z(s) = \frac{s}{L[\gamma]},
  \end{align*}
and we derive \eqref{prob:Thetaztau}.
\end{remark}
We now prove the following lemma:
  \begin{lemma}\label{lem:Gtau}
 Let $m \in \mathbb{N} \cup \{ 0 \}$, $\theta \in (0, \pi]$ and $\theta_-, \theta_+ \in (0, 2 \pi) \setminus \{ \pi \}$. Assume $\Theta_0 \in C^{2m+2}([0, 1])$ satisfies the boundary conditions of \eqref{prob:Thetaztau}. Then there exists a time $T > 0$ such that the problem \eqref{prob:Thetaztau} has a unique smooth solution
  \begin{align*}
    \Theta \in C^{2m+2, 1}([0,1] \times [0, T)) \cap C^\infty ([0, 1] \times (0, T)).
   \end{align*}
  \end{lemma}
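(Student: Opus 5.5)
Problem \eqref{prob:Thetaztau} is a quasilinear (in fact semilinear with nonlocal lower-order terms) parabolic equation of order $2m+2$ on the fixed interval $(0,1)$. I plan to solve it by a contraction mapping argument in parabolic H\"older spaces, following Shimojo--Kagaya~\cite{SK_2016} and Kagaya~\cite{K_2020}.

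\emph{Homogeneous boundary data.} First reduce to homogeneous boundary values. Set $\Theta = \Phi + \ell$, where $\ell(z) := \theta_- + (\theta+\theta_+-\theta_-)z$. The affine function $\ell$ has $\partial_z^{2j}\ell=0$ for $j\ge1$. Then $\Phi$ satisfies the same equation with $\Phi(0)=\Phi(1)=0$ and $\partial_z^{2j}\Phi(0)=\partial_z^{2j}\Phi(1)=0$ for $1\le j\le m$. These are exactly the Navier-type (odd-reflection) boundary conditions for the operator $(-1)^m\partial_z^{2m+2}$.

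\emph{Linear theory.} The linear problem $\partial_\tau\Phi-(-1)^m\partial_z^{2m+2}\Phi=f$ with these boundary conditions satisfies the Lopatinskii--Shapiro condition. It can even be solved explicitly by a sine series: odd $2$-periodic extension turns it into the periodic polyharmonic heat equation. Hence Schauder theory gives a unique solution in $C^{2m+2+\alpha,1+\alpha/(2m+2)}$, with estimates uniform on $[0,T]$, $T\le1$. This holds provided $f$ is H\"older and the initial datum $\Theta_0-\ell$ satisfies the compatibility conditions, which hold by \eqref{ACF:BC}.

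\emph{Fixed point.} Define a map $\mathcal{S}$ on a closed ball of a space $X_T$ of functions with $\Phi(\cdot,0)=\Theta_0-\ell$. Given $\Psi$, the map freezes the nonlinearity $f=(P[\Psi+\ell]+Q[\Psi+\ell]z)\partial_z(\Psi+\ell)$ and solves the linear problem. A natural choice is $X_T=C^{2m+1+\alpha,\,(2m+1+\alpha)/(2m+2)}$, with the exponent shifted if needed.

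The key point is that $P$ and $Q$ involve at most $2m+1$ derivatives of $\Theta$. This holds both in the integral terms, where $\partial_z\Theta\,\partial_z^{2m+1}\Theta$ appears, and in the boundary traces $\partial_z^{2m+1}\Theta(0),\ \partial_z^{2m+1}\Theta(1)$. Therefore the nonlinearity is of strictly lower order than $2m+2$. On the ball the nonlinearity is bounded and locally Lipschitz in $X_T$. Since the lower-order norms gain a factor $T^{\delta}$ from the maximal-regularity gap, $\mathcal{S}$ maps the ball into itself and is a contraction for small $T$. Banach's fixed point theorem then yields existence and uniqueness, with uniqueness first in the ball and then in the class by a standard continuation argument.

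\emph{Main obstacle and smoothness.} The main obstacle is the boundary-trace terms. They are nonlocal in space, and their time H\"older regularity must be controlled: $\partial_z^{2m+1}\Theta(0,\tau)$ lies in $C^{\alpha'/(2m+2)}$ in $\tau$ only if $\Theta$ has $2m+1+\alpha'$ spatial regularity. I would handle this by choosing $X_T$ with one extra order of spatial smoothness beyond $2m+1$, minus $\varepsilon$, and using anisotropic interpolation to extract the factor $T^\delta$.

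Finally, $C^\infty$ regularity for $\tau>0$ comes from bootstrapping. Difference quotients in $\tau$, or multiplication by a cutoff $\tau^k$, raise the regularity step by step. Because compatibility at $\tau=0$ is only assumed at order $2m+2$, the solution class is $C^{2m+2,1}$ up to $\tau=0$ and $C^\infty$ in the interior of the time interval.
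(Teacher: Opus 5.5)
Your reduction to homogeneous Navier-type conditions matches the paper. So does your observation that $P[\Theta]$ and $Q[\Theta]$ contain at most $\partial_z^{2m+1}\Theta$, which makes the nonlinearity lower order. The paper then realizes $(-1)^m\partial_z^{2m+2}$ with these conditions as a sectorial operator on $C([0,1])$ and applies Lunardi's semilinear theory. This gives $w\in C^{2m+2+(2m+2)\alpha,1+\alpha}$ only on $[0,1]\times(0,T)$, and bootstrapping follows. Using Schauder theory instead would be a legitimate alternative, but your linear step fails as stated. Solving the frozen problem in $C^{2m+2+\alpha,1+\alpha/(2m+2)}$ up to $\tau=0$ requires two things:
\begin{itemize}
\item $\Theta_0\in C^{2m+2+\alpha}$, whereas you only have $C^{2m+2}$;
\item the first-order corner compatibility condition, needed because the conditions $\Theta(0,\tau)=\theta_-$ and $\Theta(1,\tau)=\theta+\theta_+$ have order zero:
\[
(-1)^m\partial_z^{2m+2}\Theta_0+\bigl(P[\Theta_0]+Q[\Theta_0]\,z\bigr)\partial_z\Theta_0=0\quad\text{at } z=0 \text{ and } z=1 .
\]
\end{itemize}
This condition does not follow from \eqref{ACF:BC}. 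For $m=0$ it reads $\Theta_0''(0)=(\Theta_0'(0)-\omega)\Theta_0'(0)\cot\theta_-$, with the analogous identity at $z=1$ involving $\cot\theta_+$. A generic $\Theta_0$ with the prescribed end values violates it.

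Consequently $\mathcal S\Psi$ does not lie in the top-order class, and the self-map property cannot come from the Schauder theorem. Your contraction estimate for differences is unaffected, since $\mathcal S\Psi_1-\mathcal S\Psi_2$ has zero initial data and forcing that vanishes at $\tau=0$. What is missing is a linear estimate in the lower-order space $X_T$ itself, for data satisfying only the zeroth-order boundary conditions. One example is $\mathcal S\Psi_*$ with the reference $\Psi_*(\cdot,\tau):=\Theta_0-\ell$. Such an estimate follows from the sine-series or analytic-semigroup representation, and the paper's framework supplies exactly this.

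The same condition also invalidates your final conclusion. Suppose $\Theta\in C^{2m+2,1}([0,1]\times[0,T))$. Then $\partial_\tau\Theta(0,\cdot)\equiv0\equiv\partial_\tau\Theta(1,\cdot)$, and the equation extends by continuity to the corners, which forces the displayed identity. So $C^{2m+2,1}$ regularity up to $\tau=0$ cannot be derived from the stated hypotheses by any method; the paper's sketch also only yields regularity for $\tau>0$. You have two options:
\begin{itemize}
\item add the compatibility condition together with $\Theta_0\in C^{2m+2+\alpha}$ as hypotheses, in which case your Schauder contraction works as written and gives regularity up to $\tau=0$;
\item weaken the conclusion to existence, uniqueness, continuity at $\tau=0$ in the $X_T$-norm, and smoothness on $[0,1]\times(0,T)$.
\end{itemize}
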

\begin{proof}
We can prove Lemma \ref{lem:Gtau} by a standard argument, so we only outline the proof here.
If $\Theta$ satisfies the boundary conditions of \eqref{prob:Thetaztau}, then there exists $w$ such that
  \begin{align*}
    \partial_z^{2j} w(0, \tau) = \partial_z^{2j} w(1, \tau) = 0 \quad( j \in \{0, 1, 2, \dots, m\} ) &\quad \text{for} \quad 0 < \tau < T
  \end{align*}
and $\Theta$ can be represented by
  \begin{align*}
    \Theta(z, \tau) = w(z, \tau) + x(z)
  \end{align*}
where $x(z) := (\theta + \theta_{+} - \theta_{-})z + \theta_-$. Using this, \eqref{prob:Thetaztau} can be transformed into the following problem:

  \begin{align}
    \begin{cases}\label{eq:w}
      \partial_{\tau} w = (-1)^m \partial_z^{2m+2} w\\
      \quad \quad \quad + (P[w + x] + Q[w + x] z) \partial_z (w + x) &\quad \text{for} \quad 0 < z < 1, 0 < \tau < T,\\
      \partial_z^{2j} w(0, \tau) = \partial_z^{2j} w(1, \tau) = 0 &\quad \text{for}\quad j \in \{0, 1, 2, \dots, m\}\\
      \quad &\quad \quad \quad \text{and} \quad 0 < \tau < T,\\
      w(z, 0) = w_0(z) &\quad \text{for} \quad 0 < z < 1,
    \end{cases}
  \end{align}

where $w_0$ satisfies $\Theta_0 = w_0 + x$. Here, we define the differential operator $A: w \mapsto (-1)^m \partial_z^{2m + 2} w$ with domain 
  \begin{align*}
    D(A) = \{ w \in C^{2m + 2} ([0, 1]) \, | \, \partial_z^{2j} w (0) = \partial_z^{2j} w (1) = 0 \quad \text{for} \quad j \in \{0, 1, 2, \dots, m\} \}.
  \end{align*} 
We can check that $A: D(A) \subset C([0, 1]) \to C([0, 1])$ is sectorial in $C([0, 1])$ and thus generates an analytic semigroup on $C([0, 1])$ (see \cite[Theorem 3.2.5]{L_1995}). Furthermore, since the nonlinear term on the right-hand side is Lipschitz continuous with respect to $w$, we see that \eqref{eq:w} has a unique solution $w$ and $w \in C^{2m + 2 + (2m + 2) \alpha, 1 + \alpha} ([0, 1] \times (0, T))$ for all $\alpha \in (0, 1)$ (see \cite[Proposition 7.3.3]{L_1995}). We can then prove that $w \in C^\infty ([0, 1] \times (0, T))$ by parabolic regularity theory. This completes the proof.
\end{proof}

Using Lemma \ref{lem:Gtau}, we establish the local-in-time existence and uniqueness of a solution to \eqref{MP}.
\begin{theorem}\label{thm:local-in-timesolofMP}
Let $m \in \mathbb{N} \cup \{ 0 \}$, $\theta \in (0, \pi]$ and $\theta_-, \theta_+ \in (0, 2 \pi) \setminus \{ \pi \}$. Assume $\gamma_0 \in C^{2m+ 3}((-1, 1); \R^2)$ satisfies \eqref{ACF:BC} and \eqref{A1}. Then there exists $T > 0$ such that the problem \eqref{MP} has a unique smooth solution 
  \begin{align*}
    \gamma \in C^{2m + 3, 1}([-1, 1] \times [0, T)) \cap C^{\infty} ([-1, 1] \times (0, T)).
  \end{align*}
\end{theorem}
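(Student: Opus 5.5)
The plan has two halves. For existence, we solve the angle problem \eqref{prob:Thetaztau} and reconstruct the curve. For uniqueness, we show that any solution of \eqref{MP} produces, through the same reduction, a solution of \eqref{prob:Thetaztau}.

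\textbf{Existence.} Starting from $\gamma_0 \in C^{2m+3}$ satisfying \eqref{ACF:BC} and \eqref{A1}, define $\Theta_0$ by $\btau_0 = (\cos\Theta_0, \sin\Theta_0)$. Then $\Theta_0 \in C^{2m+2}([0,1])$ after the change $z = (u+1)/2$, and $\partial_s\Theta_0 = k_0$. By \eqref{ACF:BC}, $\Theta_0(0) = \theta_-$ and $\Theta_0(1) = \theta + \theta_+$. Since $\partial_s^{2j}\Theta_0 = \partial_s^{2j-1}k_0 = 0$ at the end points, and the parametrization has constant speed by \eqref{A1}, we also get $\partial_z^{2j}\Theta_0 = 0$ at $z = 0, 1$. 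Lemma~\ref{lem:Gtau} then gives a unique solution $\Theta$ on some $[0,T_1)$. Next solve the ODE \eqref{eq:g}. Since $Q[\Theta]$ is continuous in $\tau$, $h$ exists on $[0,T_1)$ and $\mathcal L = e^h > 0$. The time change $t(\tau) = \int_0^\tau e^{(2m+2)h}$ is a $C^1$ diffeomorphism onto some $[0,T)$. Define $\gamma$ by the explicit formula preceding Remark~\ref{rem:derivation}.

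We then check the required properties in order:
\begin{itemize}
\item[(i)] $|\partial_u\gamma| = \mathcal L/2$, so $L[\gamma] = \mathcal L$, $\btau = (\cos\Theta, \sin\Theta)$ and $k = \partial_s\Theta$.
\item[(ii)] The boundary conditions of \eqref{MP} hold. The end points lie on $\eta_0$ and $\eta_\theta$: $\gamma_2(-1,t) = 0$ by construction, and for the right end point we use the equation satisfied by $\gamma(1,t)$. The tangent conditions come from the Dirichlet data of $\Theta$, and $\partial_s^{2j-1}k = 0$ comes from $\partial_z^{2j}\Theta = 0$.
\item[(iii)] $\partial_t\gamma = F_m[k]\bnu + G_m[k]\btau$.
\end{itemize}

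For (iii) the cleanest route is to differentiate $\partial_u\gamma = \frac{\mathcal L}{2}(\cos\Theta, \sin\Theta)$ in $t$. Using the $\Theta$-equation and $\dot{\mathcal L} = Q\,\mathcal L^{-(2m+1)}$ (after the time rescaling), one finds $\partial_u(\partial_t\gamma) = \partial_u\bigl(F_m[k]\bnu + G_m[k]\btau\bigr)$. This is just the computation of Remark~\ref{rem:derivation} read backwards, together with \eqref{eq:alpha2} and \eqref{eq:dtLGamma}. Hence the two sides differ by a function of $t$ only. That function vanishes because at $u = -1$ both sides equal $-\frac{F_m[k](-1,t)}{\sin\theta_-}\bv_0$, which is \eqref{eq:pm1Gamma} together with \eqref{eq:alphapm1}; the $t$-integral term in $\gamma_1$ was designed exactly for this. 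The same identity at $u = 1$ shows that $\gamma(1,t)$ moves along $\bv_\theta$, so $\gamma(1,t) \in \eta_\theta$ for all $t$ because $\gamma_0(1) \in \eta_\theta$. Regularity transfers directly. $\Theta \in C^{2m+2,1}$ gives $\gamma \in C^{2m+3,1}$, since $\gamma$ is one integral of $\Theta$ in $u$, with time derivatives controlled through the equation. Interior smoothness of $\Theta$ gives $\gamma \in C^\infty$ for $t > 0$.

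\textbf{Uniqueness.} Let $\gamma$ be any smooth solution of \eqref{MP} with the given data. By Lemma~\ref{lem:u} it keeps constant speed $L[\gamma]/2$. Choose a continuous angle $\Theta$ with $\btau = (\cos\Theta, \sin\Theta)$ and $\Theta(-1,0) = \theta_-$; continuity forces the boundary values for all $t$, since $\Theta(\pm1,t)$ is continuous and takes values in a discrete set. By Remark~\ref{rem:derivation}, after the rescaling $(z,\tau)$ this $\Theta$ solves \eqref{prob:Thetaztau}, and $\log L[\gamma]$ solves \eqref{eq:g}. The uniqueness in Lemma~\ref{lem:Gtau} and in the ODE then identifies $\Theta$, $L$ and the time change. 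Finally, $\gamma$ is recovered from $\btau$, $L$ and the end point $\gamma(-1,t)$, and the end point is determined by \eqref{eq:pm1Gamma}. So $\gamma$ coincides with the constructed solution.

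The main obstacle is the consistency check in (iii): showing that the reconstructed curve satisfies the full vector equation, and not only that its angle evolves correctly. The tangential component $G_m[k]$, including its boundary-dependent affine part, must match $P + Qz$ under the rescaling, with the factors $\mathcal L^{2m+2}$ tracked exactly. I would verify this by comparing $\partial_u$ of both sides componentwise in the frame $(\btau, \bnu)$ and then fixing the constant of integration at $u = -1$.
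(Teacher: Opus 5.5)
Your proposal is correct and follows the same route as the paper. Lemma~\ref{lem:u} forces the constant-speed parametrization, which reduces \eqref{MP} to the angle problem \eqref{prob:Thetaztau} plus the ODE \eqref{eq:g}; Lemma~\ref{lem:Gtau} gives existence and uniqueness there; and the curve is rebuilt from $\Theta$, $\mathcal{L}$ and the end point $\gamma(-1,t)$, with your checks of (iii) and of $\gamma(1,t)\in\eta_\theta$ filling in what the paper calls ``clear.'' Two small points when you write it out: \eqref{ACF:BC} only gives $\Theta_0(1)\equiv\theta+\theta_+ \pmod{2\pi}$, so without \eqref{A2} the Dirichlet value and $\omega$ must be $\theta_-+\omega[\gamma_0]$ and $\omega[\gamma_0]$ (the paper has the same slip), and for your matching at $u=-1$ the $t$-integral term in $\gamma_1$ must be $-\frac{1}{\sin\theta_-}\int_0^t F_m[k](-1,\tilde t)\,d\tilde t$, the opposite sign to the displayed formula.
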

\begin{proof}
By Lemma \ref{lem:u}, every solution to \eqref{MP} satisfies \eqref{eq:u}. From Lemma \ref{lem:Gtau} and the above argument, it is clear that a smooth solution under \eqref{eq:u} can be constructed uniquely. This completes the proof.
\end{proof}


\subsection{Global-in-time solvability and full limit convergence}\label{sec:glo}
In this section, we prove the global-in-time solvability and the convergence of \eqref{MP}. To begin with, we summarize some properties of higher-order derivatives of the curvature of the solution to \eqref{MP}.
We use the symbol $P_N^{l, r}$ for a polynomial with coefficients depending on $\bar{k}$, each monomial of which is of the form
  \begin{align*}
    \prod_{j = 1}^N \partial_s^{p_j} (k - \bar{k}) 
  \end{align*}
with 
  \begin{align*}
    \sum_{j = 1}^N p_j = l, \quad 0 \leq p_j \leq r \quad (j = 1, 2, 3, \dots, N), \quad N \geq 1.
  \end{align*}
Using $P_N^{l, r}[k]$, we have the following lemma:
 \begin{lemma}\label{lem:kslt}
 Let $m \in \N \cup \{ 0 \}$, $\theta \in (0, \pi]$ and $\theta_-, \theta_+ \in (0, 2\pi) \setminus \{ \pi \}$. Assume $\gamma : (-1,1) \times [0, T) \to \R^2$ is a smooth solution to $\eqref{MP}$. Then the following holds$:$
   \begin{align}\label{eq:kslt} 
     \begin{aligned}
       \partial_t \partial^l_s k
       &= (-1)^m \partial^{l+2m+2}_s k
        + \sum_{j = \max \{2m, l \}}^{2m + l} P_3^{2m + l, j} + \sum_{j = \max \{2m, l\}}^{2m + l} P_2^{2m + l, j}\\
        &\quad+ \bar{k}^2 \partial_s^l F_m[k]
            + \partial_s^{l + 1} k G_m[k]
    \end{aligned}
  \end{align} 
for all $l \in \N$ and $(u, t) \in (-1, 1) \times [0, T)$.
\end{lemma}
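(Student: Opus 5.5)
Since Lemma~\ref{lem:kslt} is a structural identity, I will prove it by induction on $l$, starting from the evolution equation of $k$ in Lemma~\ref{lem:ptk} with $F = F_m[k] = (-1)^m\partial_s^{2m}(k-\bar k)$ and $G = G_m[k]$. The $l=0$ case is already the structure we want. By \eqref{eq:ptk},
\[
\partial_t k = \partial_s^2 F_m[k] + k^2 F_m[k] + \partial_s k\, G_m[k].
\]
Since $\bar k$ is spatially constant, $\partial_s^2 F_m[k] = (-1)^m\partial_s^{2m+2}k$. Writing $k = (k-\bar k)+\bar k$, we get
\[
k^2F_m[k] = (k-\bar k)^2F_m[k] + 2\bar k (k-\bar k)F_m[k] + \bar k^2F_m[k].
\]
Here the first term is of type $P_3^{2m,2m}$, the second is of type $P_2^{2m,2m}$ (its coefficient depends on $\bar k$, as allowed), and the third is the explicit $\bar k^2$ term. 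This gives \eqref{eq:kslt} at $l=0$, which I use as the base case; the case $l=1$ then follows by the same step below.

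For the inductive step, I assume \eqref{eq:kslt} holds for $l$ and differentiate in $s$. I use the commutator \eqref{eq:commutator2}, $\partial_t\partial_s = \partial_s\partial_t + (kF-\partial_sG)\partial_s$, to write
\[
\partial_t\partial_s^{l+1}k = \partial_s(\partial_t\partial_s^l k) + (kF_m[k] - \partial_s G_m[k])\,\partial_s^{l+1}k .
\]
Applying $\partial_s$ to the inductive formula produces several pieces:
\begin{itemize}
\item the leading term $(-1)^m\partial_s^{l+2m+3}k$;
\item $\partial_s$ of each $P_N^{2m+l,j}$, which is a sum of $P_N^{2m+l+1,j+1}$-type terms;
\item $\bar k^2\partial_s^{l+1}F_m[k]$;
\item $\partial_s^{l+2}k\,G_m[k]$, which is the desired transport term at level $l+1$;
\item $\partial_s^{l+1}k\,\partial_sG_m[k]$.
\end{itemize}
The last piece cancels exactly against $-\partial_sG_m[k]\,\partial_s^{l+1}k$ coming from the commutator. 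This cancellation is the key point, because $G_m[k]$ contains nonlocal terms that could not be absorbed into the $P$-classes.

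The remaining commutator term is $kF_m[k]\,\partial_s^{l+1}k$. Writing $k = (k-\bar k)+\bar k$ and $\partial_s^{l+1}k = \partial_s^{l+1}(k-\bar k)$ (valid since $l+1\ge1$), it splits as
\[
(k-\bar k)F_m[k]\,\partial_s^{l+1}(k-\bar k) \in P_3^{2m+l+1,\,\max\{2m,l+1\}},
\qquad
\bar k F_m[k]\,\partial_s^{l+1}(k-\bar k) \in P_2^{2m+l+1,\,\max\{2m,l+1\}}.
\]

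The one step that needs real checking is the index range of $j$. Differentiating $P_N^{2m+l,j}$ with $j$ ranging over $\max\{2m,l\}\le j\le 2m+l$ gives terms with maximal order at most $j+1\le 2m+l+1$. If a derivative falls on a non-maximal factor, the maximal order stays at $j$. So the new maximal orders $j'$ satisfy $\max\{2m,l\}\le j'\le 2m+l+1$, and I must argue that every such term can be listed in a sum starting at $\max\{2m,l+1\}$. When $2m\ge l+1$ this is automatic. Otherwise $l\ge 2m$, and terms with $j'=l$ occur; the monomial then has total order $2m+l+1$ with largest entry $l$. Because the definition of $P_N^{l,r}$ only requires $p_j\le r$, such a monomial also counts as a $P_N^{2m+l+1,\,l+1}$ term, since $l\le l+1$. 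I will therefore regroup these terms into the sums over $j$ from $\max\{2m,l+1\}$ to $2m+l+1$. This completes the induction, and it also shows the $\bar k^2$ term propagates unchanged.
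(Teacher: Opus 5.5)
Your proposal is correct and follows the paper's proof. Both argue by induction on $l$ via the commutator rule \eqref{eq:commutator2}, cancel the $\partial_s^{l+1}k\,\partial_s G_m[k]$ terms, and split the leftover $kF_m[k]\,\partial_s^{l+1}k$ into $P_3$ and $P_2$ parts. The only differences are cosmetic: you start the induction at $l=0$ (one step of which reproduces the paper's explicit $l=1$ computation), and you spell out the regrouping of the $j$-index range, which the paper leaves implicit.
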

\begin{proof}
We prove Lemma \ref{lem:kslt} by induction. 
For $l = 1$, we observe from Lemmas~\ref{lem:commutator} and~\ref{lem:ptk} that 
  \begin{align}\label{eq:kst}
    \begin{aligned} 
    \partial_t \partial_s k 
      &= \partial_s \partial_t k + \left( k F_m[k] - \partial_s G_m[k] \right) \partial_s k\\
      &= (-1)^m \partial^{2m+3}_s k
           + 3 k \partial_s k F_m[k]
           + k^2 \partial_s F_m[k] + \partial_s^2 k G_m[k].
    \end{aligned}
  \end{align}
Thus \eqref{eq:kslt} holds for $l=1$. 
Assume $\eqref{eq:kslt}$ for $l \in \N$ with $l \leq n$.  
It follows from Lemmas~\ref{lem:commutator} and \ref{lem:ptk} that 
  \begin{align*}
    \partial_t \partial_s^{n+1} k
      &= \partial_s \partial_t \partial^n_s k
      + \left( k F_m[k] - \partial_s G_m[k] \right) \partial^{n+1}_s k\\
      &= \partial_s \Bigg[ (-1)^m \partial^{2m+n+2}_s k
           + \sum_{j = \max \{2m, n \}}^{2m + n} P_3^{2m + n, j}\\
           &\quad  \quad \quad \quad + \sum_{j = \max \{2m, n\}}^{2m + n} P_2^{2m + n, j}
            + \bar{k}^2 \partial_s^n F_m[k]
            + \partial_s^{n + 1} k G_m[k]
\Bigg]\\
     &\quad+ k F_m[k] \partial^{n+1}_s k
      - \partial_s^{n + 1} k \partial_s G_m[k]\\
      &= (-1)^m \partial^{n+2m+3}_s k
        + \sum_{j = \max \{2m, n+1\}}^{2m + n + 1} P_3^{2m + n + 1, j}\\
        &\quad+ \sum_{j = \max \{2m, n+1\}}^{2m + n + 1} P_2^{2m + n + 1, j}
        + \bar{k}^2 \partial_s^{n + 1} F_m[k] + \partial_s^{n + 2} k G_m[k].
  \end{align*}
Thus $\eqref{eq:kslt}$ holds for $l = n +1$. This completes the proof. 
\end{proof}

\begin{lemma}\label{lem:pm1kslt}
Let $m \in \N \cup \{ 0 \}$, $\theta \in (0, \pi]$ and $\theta_-, \theta_+ \in (0, 2\pi) \setminus \{ \pi \}$. Assume $\gamma : (-1,1) \times [0, T) \to \R^2$ is a smooth solution to $\eqref{MP}$. Then the following holds$:$
  \begin{align*}
    \partial_t \partial_s^l k (\pm1, t) = \partial_s^{l + 2} F_m[k] (\pm1, t)
  \end{align*}
for all $l \in \N \cup \{ 0 \}$ and $t \in [0, T)$. 
\end{lemma}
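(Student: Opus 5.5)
We prove the claim by induction on $l$, generalizing the computations in Lemmas~\ref{lem:pm1commutator} and \ref{lem:pm1kt}. The key input is the endpoint velocity formula \eqref{eq:pm1Gamma}: at $u=\pm1$ the tangential velocity is $G_m[k](\pm1,t)=-F_m[k](\pm1,t)\cot\theta_\pm$, while the total velocity is parallel to the corresponding line. The proof of Lemma~\ref{lem:pm1commutator} showed that at $u=\pm1$
\[
\partial_t\partial_s f=\partial_s\partial_t f+\partial_s F_m[k]\cot\theta_\pm\,\partial_s f ,
\]
and that the contact-angle condition forces $\partial_s F_m[k](\pm1,t)=0$. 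Hence $\partial_t\partial_s=\partial_s\partial_t$ at the endpoints for every smooth $f$.

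The natural route is to work on the open interval, where Lemma~\ref{lem:commutator} gives $\partial_t\partial_s=\partial_s\partial_t+(kF_m[k]-\partial_sG_m[k])\partial_s$, and then pass to the limit $u\to\pm1$. This is legitimate because the solution is smooth up to the boundary by Theorem~\ref{thm:local-in-timesolofMP}.

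On $(-1,1)\times[0,T)$ we have $\partial_t k=\partial_s(\partial_sF_m[k]+kG_m[k])$ by Lemma~\ref{lem:ptk}. Iterating the commutator relation then yields
\[
\partial_t\partial_s^l k=\partial_s^l\partial_t k+\sum_{i=0}^{l-1}\partial_s^{i}\bigl[(kF_m[k]-\partial_sG_m[k])\,\partial_s^{l-i}k\bigr].
\]
Since $\partial_sG_m[k]=kF_m[k]+\frac{1}{L[\gamma]}\frac{d}{dt}L[\gamma]$ by \eqref{def:Gmk} and \eqref{eq:dtLGamma}, the bracket equals $-\frac{1}{L}\frac{dL}{dt}\,\partial_s^{l-i}k$, which is spatially constant. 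Consequently
\[
\partial_t\partial_s^lk=\partial_s^{l+1}\bigl(\partial_sF_m[k]+kG_m[k]\bigr)-l\,\frac{\frac{d}{dt}L[\gamma]}{L[\gamma]}\,\partial_s^lk .
\]
Expanding with Leibniz, $\partial_s^{l+1}(kG_m)=\partial_s^{l+1}k\,G_m+\sum_{i\ge1}\binom{l+1}{i}\partial_s^{l+1-i}k\,\partial_s^iG_m$. Here $\partial_sG_m=kF_m+\frac{\dot L}{L}$ and $\partial_s^iG_m=\partial_s^{i-1}(kF_m)$ for $i\ge2$. The resulting $\frac{\dot L}{L}$ contribution is $(l+1)\frac{\dot L}{L}\partial_s^lk$, which after cancellation leaves $\frac{\dot L}{L}\partial_s^lk$.

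We therefore obtain an identity valid up to the boundary:
\[
\partial_t\partial_s^lk=\partial_s^{l+2}F_m[k]+\partial_s^{l+1}k\,G_m+\frac{\dot L}{L}\partial_s^lk+\sum_{i=1}^{l+1}\binom{l+1}{i}\partial_s^{l+1-i}k\,\partial_s^{i-1}(kF_m).
\]
The task is then to show that every term except $\partial_s^{l+2}F_m[k]$ vanishes at $u=\pm1$. This was exactly the mechanism in Lemma~\ref{lem:pm1kt} for $l=0$, where the extra terms disappear because $F_m(\pm1)$-type contributions cancel against $G_m(\pm1)$.

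The main obstacle is this cancellation for general $l$. The boundary conditions $\partial_s^{2j-1}k(\pm1)=0$ only kill odd derivatives up to order $2m-1$, so one cannot expect the extra terms to vanish individually. I would therefore not pursue the termwise route. Instead, I would prove the statement directly at the endpoint by induction, using the endpoint commutation $\partial_t\partial_s=\partial_s\partial_t$ established in Lemma~\ref{lem:pm1commutator}. Interpreting $\partial_t\partial_s^lk(\pm1,t)$ as the derivative along the boundary trajectory, repeated commutation gives $\partial_t\partial_s^lk=\partial_s^l\partial_tk$ at $u=\pm1$.

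It then remains to evaluate $\partial_s^l\partial_tk$ at the endpoint. For this I would mimic the computation in Lemma~\ref{lem:pm1kt}: differentiate $\partial_s^{l+2}\gamma$ in time at $u=\pm1$, commute $\partial_t$ through to obtain $\partial_s^{l+2}\partial_t\gamma$, and use \eqref{eq:pm1Gamma} with the fixed frame at the endpoints. This identifies the normal component as $\partial_s^{l+2}F_m[k]$, and the claim follows by induction on $l$ with base case Lemma~\ref{lem:pm1kt}. The delicate point I expect is justifying that \eqref{eq:pm1Gamma} may be differentiated in $s$. For this I would argue that $\partial_t\gamma=F\bnu+G\btau$ holds up to the boundary, and expand the derivatives of $\bnu$ and $\btau$ in terms of derivatives of $k$.
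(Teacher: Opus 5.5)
Your final route is exactly the paper's argument: commute $\partial_t$ past $\partial_s$ at the endpoints via Lemma~\ref{lem:pm1commutator}, then evaluate via Lemma~\ref{lem:pm1kt}. It has a genuine gap at precisely the point you flagged.

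The identity \eqref{eq:pm1Gamma} holds only at $u=\pm1$, so it says nothing about $\partial_s\partial_t\gamma$ there. For the same reason, a commutation known only at $u=\pm1$ cannot be iterated, and a formula for $\partial_tk$ at $u=\pm1$ does not determine $\partial_s^l\partial_tk(\pm1,t)$.

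Suppose you follow your own suggestion and use $\partial_t\gamma=F_m[k]\bnu+G_m[k]\btau$ up to the boundary. Then $\partial_s\partial_t\gamma=(\partial_sF_m[k]+kG_m[k])\bnu+(\partial_sG_m[k]-kF_m[k])\btau$. Your own observation gives $\partial_sG_m[k]=kF_m[k]+\lambda$ with $\lambda(t):=\frac{1}{L[\gamma]}\frac{d}{dt}L[\gamma]$. Hence $\partial_t\partial_s-\partial_s\partial_t=-\lambda\,\partial_s$ on all of $[-1,1]$, endpoints included, and this does not vanish unless $\frac{d}{dt}L[\gamma]=0$. Similarly, $\partial_t\btau(\pm1,t)=0$ together with \eqref{eq:pttau} gives $\partial_sF_m[k]=-kG_m[k]=kF_m[k]\cot\theta_\pm$ at $u=\pm1$, not $0$. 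So neither ingredient of the endpoint route holds for general contact angles.

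Your first, termwise computation was the right one, and the obstruction you found there is real. There is one slip to correct: by Lemma~\ref{lem:ptk}, $\partial_tk=\partial_s(\partial_sF_m[k]+kG_m[k])-\lambda k$, not $\partial_s(\partial_sF_m[k]+kG_m[k])$. With this, the $\lambda$-terms cancel exactly and, on $[-1,1]\times(0,T)$,
\[
\partial_t\partial_s^lk=\partial_s^{l+2}F_m[k]+\partial_s^l\bigl(k^2F_m[k]\bigr)+\partial_s^{l+1}k\,G_m[k]+\sum_{i=1}^{l}\binom{l}{i}\partial_s^{l+1-i}k\,\partial_s^{i-1}\bigl(kF_m[k]\bigr).
\]
For $l=1$ this is \eqref{eq:kst}.

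Already for $l=0$ the statement would force $F_m[k]\bigl(k^2-\partial_sk\cot\theta_\pm\bigr)=0$ at $u=\pm1$. For $m\ge1$, where $\partial_sk(\pm1,t)=0$, this reads $k^2F_m[k](\pm1,t)=0$. For $m=0$ it reads $k(k-\bar k)\bigl(k-(k-\bar k)\cot^2\theta_\pm\bigr)=0$ at the endpoints. Neither follows from the boundary conditions of \eqref{MP}. So, without further hypotheses, the identity does not hold in this generality, and no route will prove it.

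What does hold at $u=\pm1$ is the displayed formula with its lower-order terms kept. Later uses of the lemma, such as the vanishing of all odd derivatives of $k$ in Lemma~\ref{lem:ks2l+1}, would need to be redone with those terms.
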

\begin{proof}
From Lemmas \ref{lem:pm1commutator} and \ref{lem:pm1kt}, we have
  \begin{align*}
    \partial_t \partial_s^l k (\pm1, t) 
      = \partial_s^l \partial_t k (\pm1, t)
      = \partial_s^{l + 2} F_m[k] (\pm1, t).
  \end{align*}
This completes the proof.
\end{proof}

Using the boundary condition of \eqref{MP}, Lemmas \ref{lem:pm1commutator} and \ref{lem:pm1kslt}, we find the boundary condition on higher-order derivatives of the curvature of the solution to \eqref{MP}. 

\begin{lemma}\label{lem:ks2l+1}
Let $m \in \N \cup \{ 0 \}$, $\theta \in (0, \pi]$ and $\theta_-, \theta_+ \in (0, 2\pi) \setminus \{ \pi \}$. Assume $\gamma : (-1,1) \times [0, T) \to \R^2$ is a smooth solution to $\eqref{MP}$. Then the following holds$:$
\begin{equation}\label{eq:ks2l+1}
\partial^{2l+1}_s k (\pm1, t) = 0
\end{equation}
for all $l \in \N \cup \{ 0 \}$ and $t \in [0, T)$. 
\end{lemma}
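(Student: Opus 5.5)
We argue by strong induction on $l$, using the boundary conditions of \eqref{MP} and the time-differentiated boundary identities from Lemma~\ref{lem:pm1kslt}.

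\textbf{Base cases.} For $l \le m-1$, i.e.\ $2l+1 \le 2m-1$, the claim $\partial_s^{2l+1}k(\pm1,t)=0$ is exactly the boundary condition $\partial_s^{2j-1}k(\pm1,t)=0$, $j=1,\dots,m$, in \eqref{MP}.

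The first derivative not covered by \eqref{MP} is $\partial_s^{2m+1}k$. In the proof of Lemma~\ref{lem:pm1commutator} we showed $\partial_sF_m[k](\pm1,t)=0$. Since $\bar k$ is constant in $s$,
\[
\partial_sF_m[k]=(-1)^m\partial_s^{2m+1}k,
\]
so $\partial_s^{2m+1}k(\pm1,t)=0$. This gives the case $l=m$, including $m=0$, where it reads $\partial_sk(\pm1,t)=0$.

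\textbf{Inductive step.} Assume $\partial_s^{2i+1}k(\pm1,t)=0$ for all $0\le i\le l$ and all $t\in[0,T)$, for some $l\ge m$. Since this holds for every $t$, we may differentiate in $t$. Lemma~\ref{lem:pm1kslt} with index $2(l-m)+1$ gives
\[
0=\partial_t\partial_s^{2(l-m)+1}k(\pm1,t)=\partial_s^{2(l-m)+3}F_m[k](\pm1,t)=(-1)^m\partial_s^{2l+3}k(\pm1,t).
\]
The index $2(l-m)+1$ is at most $2l+1$, so it lies in the inductive range: that odd derivative vanishes identically in $t$ at the endpoints. Hence $\partial_s^{2(l+1)+1}k(\pm1,t)=0$, which closes the induction.

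\textbf{Point requiring care.} One must check that Lemma~\ref{lem:pm1kslt} is legitimately applied at the boundary: the identity $\partial_t\partial_s^l k=\partial_s^{l+2}F_m[k]$ at $u=\pm1$ relies on Lemma~\ref{lem:pm1commutator}, which lets $\partial_t$ and $\partial_s$ commute there. One also needs enough smoothness up to $u=\pm1$ so that the boundary trace of $\partial_s^{2(l-m)+1}k$ is differentiable in $t$. For $t>0$ this follows from $C^\infty([-1,1]\times(0,T))$ in Theorem~\ref{thm:local-in-timesolofMP}. At $t=0$ it follows by continuity, or from the compatibility conditions for smooth initial data. Apart from this, the argument is purely algebraic.
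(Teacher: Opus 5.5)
Your argument is the paper's own proof, step for step. The point you flagged as ``requiring care'' is exactly where it breaks, and for algebraic rather than regularity reasons.

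\textbf{The base case fails.} The case $l=m$ rests on $\partial_sF_m[k](\pm1,t)=0$, taken from the proof of Lemma~\ref{lem:pm1commutator}. This is false in general when $\cot\theta_\pm\neq0$. Since $\btau(\pm1,t)$ does not depend on $t$, \eqref{eq:pttau} at the end points gives $(\partial_sF_m[k]+kG_m[k])(\pm1,t)=0$; this is what the proof of Lemma~\ref{lem:rot-invariant} uses. Also, \eqref{def:Gmk} gives $G_m[k](\pm1,t)=-F_m[k](\pm1,t)\cot\theta_\pm$. Hence
\[
(-1)^m\partial_s^{2m+1}k(\pm1,t)=\partial_sF_m[k](\pm1,t)=\cot\theta_\pm\,k(\pm1,t)\,F_m[k](\pm1,t),
\]
which does not vanish in general. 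For $m=0$ it reads $\partial_sk=\cot\theta_\pm\,k\,(k-\bar k)$ at $u=\pm1$.

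The paper's own angle formulation confirms this. Since $\partial_\tau\Theta(0,\tau)=0$, evaluating \eqref{prob:Thetaztau} at $z=0$ forces $(-1)^m\partial_z^{2m+2}\Theta(0,\tau)=-P[\Theta](0,\tau)\,\partial_z\Theta(0,\tau)$. Here $P[\Theta](0,\tau)=-(-1)^m[\partial_z^{2m}(\partial_z\Theta-\omega)](0,\tau)\cot\theta_-$ and $\partial_z^{2m+2}\Theta=L[\gamma]^{2m+2}\partial_s^{2m+1}k$. So the case $l=m$ of \eqref{eq:ks2l+1} is false in general, and no argument can prove the statement for non-right contact angles.

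\textbf{The inductive step fails too.} In Lemma~\ref{lem:pm1commutator}, $\langle\partial_s\partial_t\gamma,\btau\rangle(-1,t)$ is obtained by differentiating \eqref{eq:pm1Gamma} in $s$. That is not allowed, because \eqref{eq:pm1Gamma} holds only at $u=\pm1$. In fact, \eqref{def:Gmk} and \eqref{eq:dtLGamma} give $-kF_m[k]+\partial_sG_m[k]=\frac{1}{L[\gamma]}\frac{d}{dt}L[\gamma]$ for every $u$. Since \eqref{eq:commutator2} is pointwise and holds up to $u=\pm1$, it yields $\partial_t\partial_s-\partial_s\partial_t=-\frac{1}{L[\gamma]}\frac{d}{dt}L[\gamma]\,\partial_s$ at the end points, which is nonzero in general.

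Consequently Lemmas~\ref{lem:pm1kt} and \ref{lem:pm1kslt}, on which your inductive step rests, drop terms. The correct end-point formula for $\partial_t\partial_s^lk$ is \eqref{eq:ptk} (for $l=0$) or the full expression of Lemma~\ref{lem:kslt}, evaluated at $u=\pm1$. That includes the $P_3$ and $P_2$ terms, $\bar k^2\partial_s^lF_m[k]$, and $\partial_s^{l+1}k\,G_m[k]$.

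\textbf{What survives.} Your scheme works only for right angles, $\cot\theta_\pm=0$. Then $G_m[k](\pm1,t)=0$, and the display above gives the base case. Differentiating $\partial_s^{2(l-m)+1}k(\pm1,t)=0$ in $t$ via Lemma~\ref{lem:kslt} then gives $\partial_s^{2l+3}k(\pm1,t)=0$. Every remaining term contains an odd derivative of order at most $2l+1$, which vanishes by the induction hypothesis. These terms are the $P_3$ and $P_2$ monomials of total order $2l+1$, and $\bar k^2\partial_s^{2(l-m)+1}F_m[k]=(-1)^m\bar k^2\partial_s^{2l+1}k$.
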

\begin{proof}
We prove Lemma \ref{lem:ks2l+1} by induction. 
When $l \leq m - 1$, \eqref{eq:ks2l+1} follows from the boundary conditions of \eqref{MP}. When $l = m$, \eqref{eq:ks2l+1} has been proved in the proof of Lemma \ref{lem:pm1commutator}. Assume \eqref{eq:ks2l+1} holds for $l \in \N$ with $m \leq l \leq n$. Thus, from Lemma \ref{lem:pm1kslt}, we have
  \begin{align*}
    0 &= \partial_t \partial_s^{2n - 2m + 1} k (\pm1, t)
       = \partial_s^{2n - 2m + 1} \partial_t k (\pm1, t)\\
       &= \partial_s^{2n - 2m + 1} \partial_s^2 F_m[k] (\pm1, t)
       = (-1)^m \partial_s^{2n + 3} k (\pm1, t).
  \end{align*}
Thus \eqref{eq:ks2l+1} holds for $l = n + 1$. This completes the proof. 
\end{proof}

Here, we recall the definition of $\mathcal{E}_0 [\gamma]$ and $\mathcal{E}_1 [\gamma]$:
\begin{align*}
\begin{aligned}
\mathcal{E}_0 [\gamma] := E [\gamma] \int (k - \bar{k})^2 \, ds,\qquad
\mathcal{E}_1 [\gamma] := E [\gamma]^3 \int_\gamma ( \partial_s k )^2 \, ds.
\end{aligned}
\end{align*}

Using Lemma \ref{lem:ks2l+1}, we calculate the time derivative of $\mathcal{E}_0 [\gamma]$ and $\mathcal{E}_1 [\gamma]$ as follows:

\begin{lemma}\label{lem:dtmathcalE0}
Let $m = 0$, $\theta \in (0, \pi]$ and $\theta_\pm \in (0, 2\pi) \setminus \{ \pi \}$. Assume $\gamma : (-1,1) \times [0, T) \to \mathbb{R}^2$ is the smooth solution to $\eqref{MP}$. Then the following holds$:$
    \begin{align}
    \begin{aligned}\label{eq:dtmathcalE0}
    \frac{d}{dt} \mathcal{E}_0 [\gamma]
    &= \frac{d}{dt} E [\gamma] \int_\gamma (k - \bar{k})^2 \, ds
            - 2 E [\gamma] \| \partial_s k \|_2^2
           + E [\gamma] \int_\gamma (k - \bar{k})^4\, ds\\
    &\quad+ 3 \bar{k} E [\gamma] \int_\gamma (k - \bar{k})^3 \, ds
               + 2 \bar{k}^2 E [\gamma] \| k - \bar{k} \|_2^2\\
     &\quad + 2 E [\gamma] \int_\gamma (k - \bar{k}) \partial_s k G_0 [k] \, ds
                + E [\gamma] \int_\gamma (k - \bar{k})^2 \partial_s G_0 [k] \, ds
    \end{aligned}
  \end{align}
 for all $t \in [0, T).$
\end{lemma}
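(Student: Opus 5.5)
The plan is to differentiate the product $\mathcal{E}_0[\gamma] = E[\gamma]\,\|k-\bar k\|_2^2$ by the product rule. The term $\frac{d}{dt}E[\gamma]\int_\gamma (k-\bar k)^2\,ds$ then appears as is, and everything reduces to computing $\frac{d}{dt}\int_\gamma (k-\bar k)^2\,ds$ and multiplying by $E[\gamma]$. Throughout I write $f := k-\bar k = F_0[k]$ and $G := G_0[k]$. I view the solution of \eqref{MP} as a solution of \eqref{GF} with $F=f$ and $G=G_0[k]$, so that Lemmas \ref{lem:commutator} and \ref{lem:ptk} apply.

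\emph{Step 1: differentiate under the integral.} By \eqref{eq:commutator1},
\[
\frac{d}{dt}\int_\gamma f^2\,ds = \int_\gamma 2f\Bigl(\partial_t k - \frac{d}{dt}\bar k\Bigr)\,ds + \int_\gamma f^2\,(-kf + \partial_s G)\,ds .
\]
The term containing $\frac{d}{dt}\bar k$ vanishes because $\frac{d}{dt}\bar k$ is spatially constant and $\int_\gamma f\,ds = 0$. The last piece, $\int_\gamma f^2\partial_s G\,ds$, is already one of the terms in \eqref{eq:dtmathcalE0}.

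\emph{Step 2: insert the curvature equation.} By \eqref{eq:ptk} with $F=f$, and since $\partial_s^2 f = \partial_s^2 k$,
\[
\partial_t k = \partial_s^2 k + k^2 f + \partial_s k\, G .
\]
The $G$-term gives $2\int_\gamma f\,\partial_s k\,G\,ds$ directly. For the leading term I integrate by parts:
\[
2\int_\gamma f\,\partial_s^2 k\,ds = \bigl[2f\,\partial_s k\bigr]_{s=0}^{s=L[\gamma]} - 2\|\partial_s k\|_2^2 .
\]
The boundary term vanishes because $\partial_s k(\pm1,t) = 0$. This is the $l=0$ case of Lemma~\ref{lem:ks2l+1}; when $m=0$ it is exactly the identity $\partial_s F_0[k](\pm1,t) = 0$ derived in the proof of Lemma~\ref{lem:pm1commutator}. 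This is the only genuinely delicate point, since no Neumann condition is imposed in \eqref{MP} for $m=0$ and it must come from the contact-angle conditions.

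\emph{Step 3: expand the polynomial terms.} The remaining polynomial terms are $2\int_\gamma k^2 f^2\,ds - \int_\gamma k f^3\,ds$. Substituting $k = f + \bar k$ gives the pointwise identity
\[
2f^2(f+\bar k)^2 - f^3(f+\bar k) = f^4 + 3\bar k f^3 + 2\bar k^2 f^2 ,
\]
which yields $\int_\gamma f^4\,ds + 3\bar k\int_\gamma f^3\,ds + 2\bar k^2\|f\|_2^2$. Multiplying all contributions by $E[\gamma]$ and adding $\frac{d}{dt}E[\gamma]\,\|f\|_2^2$ gives exactly \eqref{eq:dtmathcalE0}. The rest is bookkeeping.
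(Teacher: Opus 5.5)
Your proposal is correct and follows essentially the same route as the paper. Both arguments use the product rule, \eqref{eq:commutator1} for $\partial_t\,ds$, and the curvature equation \eqref{eq:ptk} with $F=k-\bar k$. They then perform one integration by parts, whose boundary term vanishes by Lemma~\ref{lem:ks2l+1} (i.e.\ $\partial_s k(\pm1,t)=0$ from the contact-angle conditions), and finish with the expansion $k=f+\bar k$.

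The only cosmetic difference is how the $\frac{d}{dt}\bar k$ term is handled. The paper first records $\frac{d}{dt}\bar k=-\frac{1}{L[\gamma]}\frac{d}{dt}L[\gamma]\,\bar k$ via Lemma~\ref{lem:rot-invariant}. You instead observe directly that this term is spatially constant and so integrates to zero against $k-\bar k$, which is all that is actually needed.
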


\begin{proof}
It follows from Lemma \ref{lem:rot-invariant} that
  \begin{align*}
    \frac{d}{dt} \bar{k} = - \frac{\frac{d}{dt} L[\gamma]}{L[\gamma]} \bar{k}.
  \end{align*}
This together with Lemmas \ref{lem:commutator}, \ref{lem:ptk}, and \ref{lem:rot-invariant} implies that
  \begin{align*}
   \frac{d}{dt} \mathcal{E}_0 [\gamma]
    &= \frac{d}{dt} E [\gamma] \int_\gamma (k - \bar{k})^2 \, ds
       + 2 E [\gamma] \int_\gamma (k - \bar{k}) \partial_t (k - \bar{k}) \, ds\\
       &\quad + E [\gamma] \int_\gamma (k - \bar{k})^2 ( - k F_0 [k] + \partial_s G_0 [k]) \, ds\\
    &= \frac{d}{dt} E [\gamma] \int_\gamma (k - \bar{k})^2 \, ds
         + 2 E [\gamma] \int_\gamma (k - \bar{k}) \partial_s^2 F_0 [k] \, ds\\
        &\quad + 2 E [\gamma] \int_\gamma (k - \bar{k}) k^2 F_0 [k] \, ds\\
    &\quad + 2 E [\gamma] \int_\gamma (k - \bar{k}) \partial_s k G_0 [k] \, ds
         - E [\gamma] \int_\gamma (k - \bar{k})^2 k F_0 [k] \, ds\\
        &\quad + E [\gamma] \int_\gamma (k - \bar{k})^2 \partial_s G_0 [k] \, ds.
   \end{align*}
Using Lemma \ref{lem:ks2l+1} and integrating by parts, we have
  \begin{align*}
    \frac{d}{dt} \mathcal{E}_0 [\gamma]
    & = \frac{d}{dt} E [\gamma] \int_\gamma (k - \bar{k})^2 \, ds
            - 2 E [\gamma] \| \partial_s k \|_2^2
           + E [\gamma] \int_\gamma (k - \bar{k})^3 F_0 [k] \, ds\\
    &\quad + 3 \bar{k} E [\gamma] \int_\gamma (k - \bar{k})^2 F_0 [k] \, ds
           + 2 \bar{k}^2 E [\gamma] \| k - \bar{k} \|_2^2 \\
    &\quad + 2 E [\gamma] \int_\gamma (k - \bar{k}) \partial_s k G_0 [k] \, ds
           + E [\gamma] \int_\gamma (k - \bar{k})^2 \partial_s G_0 [k] \, ds,
  \end{align*}
where we used $\int_\gamma (k -\bar{k})\, ds = 0$. Thus we have \eqref{eq:dtmathcalE0}.
\end{proof}

\begin{lemma}\label{lem:dtmathcalE1}
Let $m \in \mathbb{N}$, $\theta \in (0, \pi]$ and $\theta_-, \theta_+ \in (0, 2\pi) \setminus \{ \pi \}$. Assume $\gamma : (-1,1) \times [0, T) \to \mathbb{R}^2$ is a smooth solution to $\eqref{MP}$. Then the following holds$:$
  \begin{align*}
    \begin{aligned}
    \frac{d}{dt} \mathcal{E}_1 [\gamma]
        &= 3 E[\gamma]^2 \frac{d}{dt} E [\gamma] \int_\gamma (\partial_s k)^2 \, ds
              - E[\gamma]^3 \| \partial_s^{m + 2} k \|_2^2\\
       &\quad + 5 E[\gamma]^3 \int_\gamma (k - \bar{k}) (\partial_s k)^2 F_m[k] \, ds
        +5 \bar{k} E[\gamma]^3 \int_\gamma (\partial_s k)^2 F_m[k] \, ds\\
         &\quad + 2 E[\gamma]^3 \int_\gamma \partial_s k (k - \bar{k})^2 \partial_s F_m[k] \, ds\\
        &\quad + 4 \bar{k} E[\gamma]^3 \int_\gamma \partial_s k (k - \bar{k}) \partial_s F_m[k] \, ds
         + 2 \bar{k}^2 E[\gamma]^3 \| \partial_s^{m + 1} k \|_2^2
     \end{aligned}
   \end{align*}
 for all $t \in [0, T)$.
\end{lemma}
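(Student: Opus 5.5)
The plan is to mirror the proof of Lemma~\ref{lem:dtmathcalE0}: differentiate the product $\mathcal{E}_1[\gamma] = E[\gamma]^3 \int_\gamma (\partial_s k)^2\,ds$ in $t$, then simplify by integration by parts, using the boundary information of Lemma~\ref{lem:ks2l+1}.

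By the product rule,
\begin{align*}
\frac{d}{dt}\mathcal{E}_1[\gamma]
= 3E[\gamma]^2\frac{d}{dt}E[\gamma]\int_\gamma(\partial_s k)^2\,ds
+ E[\gamma]^3\frac{d}{dt}\int_\gamma(\partial_s k)^2\,ds .
\end{align*}
The first term is already in the stated form. For the second term I use \eqref{eq:commutator1} for $\partial_t ds$ and the case $l=1$ of Lemma~\ref{lem:kslt}, namely \eqref{eq:kst}, for $\partial_t\partial_s k$. This gives
\begin{align*}
\frac{d}{dt}\int_\gamma(\partial_s k)^2\,ds
&= 2\int_\gamma \partial_s k\,\partial_s^3F_m[k]\,ds
+ 6\int_\gamma k(\partial_s k)^2F_m[k]\,ds
+ 2\int_\gamma k^2\partial_s k\,\partial_sF_m[k]\,ds\\
&\quad + 2\int_\gamma \partial_s k\,\partial_s^2k\,G_m[k]\,ds
+ \int_\gamma(\partial_s k)^2(-kF_m[k]+\partial_sG_m[k])\,ds .
\end{align*}

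The key simplifications are as follows.

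\textbf{(i) Tangential terms.} Since $2\partial_s k\,\partial_s^2 k = \partial_s(\partial_s k)^2$, the $G_m[k]$-terms combine to $\int_\gamma \partial_s\bigl((\partial_s k)^2G_m[k]\bigr)\,ds$. This equals the boundary term $\bigl[(\partial_s k)^2G_m[k]\bigr]_{-1}^{1}$, which vanishes because $\partial_s k(\pm1,t)=0$ by Lemma~\ref{lem:ks2l+1}.

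\textbf{(ii) Lower-order terms.} The remaining cubic terms are $5\int_\gamma k(\partial_s k)^2F_m[k]\,ds$ and $2\int_\gamma k^2\partial_s k\,\partial_sF_m[k]\,ds$. I substitute $k=(k-\bar k)+\bar k$ and $k^2=(k-\bar k)^2+2\bar k(k-\bar k)+\bar k^2$. This produces the terms with coefficients $5$, $5\bar k$, $2$ and $4\bar k$ in the statement. The leftover piece $2\bar k^2\int_\gamma \partial_s k\,\partial_sF_m[k]\,ds$ is handled by integrating by parts $m$ times. All boundary terms vanish because they contain $\partial_s^{2j+1}k(\pm1)$, which is zero by Lemma~\ref{lem:ks2l+1}. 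This step yields $2\bar k^2\|\partial_s^{m+1}k\|_2^2$.

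\textbf{(iii) Principal term.} The principal contribution is $2(-1)^m\int_\gamma\partial_s k\,\partial_s^{2m+3}k\,ds$. I integrate by parts $m+1$ times. At each step the boundary term pairs an odd-order derivative of $k$ with another derivative, and the odd-order derivative vanishes at $u=\pm1$ by Lemma~\ref{lem:ks2l+1}. The result is a negative multiple of $\|\partial_s^{m+2}k\|_2^2$.

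The main obstacle is step (iii): I must track the sign and parity at each integration by parts. The goal is to confirm that every boundary term contains a factor $\partial_s^{2j+1}k(\pm1,t)$, and that the surviving bulk term carries exactly the coefficient recorded in the statement, $-E[\gamma]^3\|\partial_s^{m+2}k\|_2^2$. I would carry out this coefficient bookkeeping explicitly, checking separately the cases $m$ even and $m$ odd, and compare the outcome against the stated coefficient before assembling the final identity.

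Collecting (i)–(iii) and multiplying by $E[\gamma]^3$ then gives the identity in the form stated in the lemma.
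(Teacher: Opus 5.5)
Your route is the same as the paper's: the product rule, \eqref{eq:kst} and \eqref{eq:commutator1}, collapsing the $G_m[k]$-terms into the vanishing boundary term $[(\partial_s k)^2G_m[k]]_{-1}^{1}$, the splitting $k=(k-\bar k)+\bar k$, and integration by parts using Lemma~\ref{lem:ks2l+1}. Steps (i) and (ii) are correct as written.

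The problem is the computation you deferred in (iii): it does not give the coefficient you predict. Moving $m+1$ derivatives gives
\[
\int_\gamma \partial_s k\,\partial_s^{2m+3}k\,ds=\sum_{i=0}^{m}(-1)^i\bigl[\partial_s^{i+1}k\,\partial_s^{2m+2-i}k\bigr]_{-1}^{1}+(-1)^{m+1}\|\partial_s^{m+2}k\|_2^2 .
\]
Each boundary term vanishes, because the orders $i+1$ and $2m+2-i$ add up to the odd number $2m+3$, so one factor is an odd-order derivative of $k$. Hence the principal term equals $2(-1)^m(-1)^{m+1}\|\partial_s^{m+2}k\|_2^2=-2\|\partial_s^{m+2}k\|_2^2$. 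This holds uniformly in $m$, so no even/odd case split is needed. Your argument therefore yields the identity with $-2E[\gamma]^3\|\partial_s^{m+2}k\|_2^2$, not $-E[\gamma]^3\|\partial_s^{m+2}k\|_2^2$.

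So your closing claim, that collecting (i)--(iii) gives the identity ``in the form stated,'' is false. The coefficient $-1$ in the statement is a misprint. It also appears in the last display of the paper's proof, immediately after the correct intermediate term $(-1)^m 2E[\gamma]^3\int_\gamma\partial_s k\,\partial_s^{2m+3}k\,ds$.

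The correct value $-2$ is what the paper actually uses in the proof of Lemma~\ref{lem:mathcalE1bdd}, and it matters there. It produces the leading factor $2(1-\omega^2/\pi^2)$ against $I_5\le 2\frac{\omega^2}{\pi^2}E[\gamma]^3\|\partial_s^{m+2}k\|_2^2$, which is positive throughout $\mathcal{D}$ because $|\omega|<\pi$. With $-1$ one would only get $1-2\omega^2/\pi^2$. You should state and prove the lemma with $-2$.
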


\begin{proof}
From Lemmas \ref{lem:commutator} and \ref{lem:ptk} and \eqref{eq:kst}, we have
  \begin{align*}
    \frac{d}{dt} \mathcal{E}_1 [\gamma]
      &= 3 E[\gamma]^2 \frac{d}{dt} E [\gamma] \int_\gamma (\partial_s k)^2 \, ds
            + 2 E[\gamma]^3 \int_\gamma \partial_s k (\partial_t \partial_s k) \, ds\\
      &\quad + E[\gamma]^3 \int_\gamma (\partial_s k)^2 (- k F_m[k] + \partial_s G_m [k]) \, ds\\
      &= 3 E[\gamma]^2 \frac{d}{dt} E [\gamma] \int_\gamma (\partial_s k)^2 \, ds
        + (-1)^m 2 E[\gamma]^3 \int_\gamma \partial_s k \partial^{2m+3}_s k \, ds\\
        &\quad + 6 E[\gamma]^3 \int_\gamma  k (\partial_s k)^2 F_m[k] \, ds
         + 2 E[\gamma]^3 \int_\gamma k^2 \partial_s k \partial_s F_m[k] \, ds\\
        &\quad + 2 E[\gamma]^3 \int_\gamma \partial_s k \partial_s^2 k G_m[k] \, ds\\
        &\quad + E[\gamma]^3 \int_\gamma (\partial_s k)^2 (- k F_m[k] + \partial_s G_m [k]) \, ds\\
      &= 3 E[\gamma]^2 \frac{d}{dt} E [\gamma] \int_\gamma (\partial_s k)^2 \, ds
           + (-1)^m 2 E[\gamma]^3 \int_\gamma \partial_s k \partial_s^{2m + 3} k \, ds\\
      &\quad + 5 E[\gamma]^3 \int_\gamma k ( \partial_s k)^2 F_m[k] \, ds
            + 2 E[\gamma]^3 \int_\gamma k^2 \partial_s k \partial_s F_m [k] \, ds.
  \end{align*}
Using Lemma \ref{lem:ks2l+1} and integrating by parts, we obtain
  \begin{align*}
    \begin{aligned}
      \frac{d}{dt} \mathcal{E}_1 [\gamma]
        &= 3 E[\gamma]^2 \frac{d}{dt} E [\gamma] \int_\gamma (\partial_s k)^2 \, ds
              - E[\gamma]^3 \| \partial_s^{m + 2} k \|_2^2\\
       &\quad + 5 E[\gamma]^3 \int_\gamma (k - \bar{k}) (\partial_s k)^2 F_m[k] \, ds
        +5 \bar{k} E[\gamma]^3 \int_\gamma (\partial_s k)^2 F_m[k] \, ds\\
         &\quad + 2 E[\gamma]^3 \int_\gamma \partial_s k (k - \bar{k})^2 \partial_s F_m[k] \, ds\\
        &\quad + 4 \bar{k} E[\gamma]^3 \int_\gamma \partial_s k (k - \bar{k}) \partial_s F_m[k] \, ds
         + 2 \bar{k}^2 E[\gamma]^3 \| \partial_s^{m + 1} k \|_2^2
    \end{aligned}
  \end{align*}
where we used $\int_\gamma (k -\bar{k})\, ds = 0$. This completes the proof.
\end{proof}
From the definition of $F_m[k]$ and the boundary conditions in \eqref{P}, we can characterize the stationary solutions of \eqref{P} as follows.
\begin{lemma}\label{lem:stationarysol}
Every smooth stationary solution $\gamma$ to \eqref{P} is either a straight line segment or a circular arc.
\end{lemma}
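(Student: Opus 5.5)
A stationary solution has $\langle \partial_t \gamma, \bnu\rangle \equiv 0$, so $F_m[k] \equiv 0$ on $[-1,1]$. The plan is to show from this identity, together with the boundary conditions of \eqref{P}, that $k$ is constant. A curve of constant curvature is a straight segment if $k \equiv 0$ and a circular arc if $k \neq 0$, by integrating the Frenet--Serret formulas \eqref{FS}.

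\emph{Case $m=0$.} Here $F_0[k] = k - \bar k$, so $F_0[k]\equiv 0$ gives $k \equiv \bar k$ directly.

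\emph{Case $m \ge 1$.} Set $f := k - \bar k$. Then $\int_\gamma f\,ds = 0$ and $\partial_s^{2m} f \equiv 0$. I would test this identity against $f$ and integrate by parts $m$ times:
\[
0 = \int_\gamma f\, \partial_s^{2m} f \, ds = (-1)^m \int_\gamma (\partial_s^m f)^2\, ds + \sum_{i=0}^{m-1} (-1)^i \Bigl[\partial_s^{i} f\, \partial_s^{2m-1-i} f\Bigr]_{0}^{L[\gamma]} .
\]
Each boundary product contains one derivative of odd order, since $i + (2m-1-i) = 2m-1$ is odd. Both orders lie in $\{0,\dots,2m-1\}$, so the odd one equals $2j-1$ for some $j \in \{1,\dots,m\}$. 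By the boundary condition $\partial_s^{2j-1}k(\pm1)=0$ in \eqref{P}, and since $\partial_s^{2j-1} f = \partial_s^{2j-1} k$, every boundary term vanishes. Hence $\partial_s^m f \equiv 0$, and $f$ is a polynomial in $s$ of degree at most $m-1$.

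To conclude $f$ is constant I would argue by descent. If $m \ge 2$, then $\partial_s^{m-1} f$ is constant; the same identity shows $\partial_s^{2(m-1)} f = 0$ whenever $m \ge 2$, so the integration-by-parts argument can be repeated with $m$ replaced by $m-1$, using the same boundary conditions. After repeating, we reach $\partial_s f \equiv 0$, so $f$ is constant. Since $\int_\gamma f\,ds = 0$, this forces $f \equiv 0$, i.e. $k \equiv \bar k$.

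Alternatively, one can argue directly: a polynomial $f$ of degree at most $2m-1$ with $f' = \partial_s f$ vanishing at both endpoints, together with its odd derivatives up to order $2m-1$, is constant. This follows by applying the vanishing conditions from the top degree down, though it requires more care.

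The only delicate point is the bookkeeping of boundary terms in the integration by parts. The parity observation above settles it, so I do not expect a real obstacle.
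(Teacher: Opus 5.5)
Your proof is correct and follows the paper's route. Testing $F_m[k]\equiv 0$ against the curvature and integrating by parts gives $\partial_s^m(k-\bar k)\equiv 0$; your parity bookkeeping is exactly what makes the boundary terms vanish. A descent using the odd-order boundary conditions together with $\int_\gamma (k-\bar k)\,ds=0$ then forces $k\equiv \bar k$.

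The only difference is in the descent. You re-apply the same integration-by-parts identity at orders $m-1,\dots,1$, whereas the paper notes that each odd derivative $\partial_s^{2l-1}(k-\bar k)$ becomes a polynomial of degree at most one vanishing at both end points. Your ``alternative'' argument is exactly this degree count, and applying it directly to $\partial_s^{2m}(k-\bar k)\equiv 0$ would even let you skip the energy identity.
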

\begin{proof}
Let $\gamma$ be a smooth stationary solution to \eqref{P}. Since $F_m[k] \equiv 0$, it follows from the boundary conditions in \eqref{P} that
\begin{align*}
0 = \int_\gamma k F_m[k] \, ds = \| \partial_s^m (k - \bar{k}) \|_2^2.
\end{align*}
Hence,
\begin{align*}
\partial_s^m (k - \bar{k}) \equiv 0.
\end{align*}
If $m = 0$, this immediately yields $k \equiv \bar{k}$. Assume $m \geq 1$, and set $f:= k - \bar{k}$. Then $\partial_s^m f \equiv 0$, and therefore $f$ is a polynomial in the arc-length parameter $s$ of degree at most $m - 1$. Since $\bar{k}$ is independent of $s$, the boundary conditions in \eqref{P} imply
\[
\partial_s^{2j - 1} f(0) = \partial_s^{2j - 1} f(L[\gamma]) = 0 \quad \text{for} \quad j \in \{1, 2, 3, \dots, m\}.
\]
We now show that $f$ is constant. If $m = 1$, then $\partial_s f \equiv 0$, and hence $f$ is constant. Suppose that $m \geq 2$, and let $2l-1$ be the largest odd integer strictly less than $m$. Since $f$ has degree at most $m-1$, $\partial_s^{2l-1}f$ is a polynomial of degree at most one. Moreover,
\[
\partial_s^{2l - 1}f (0) = \partial_s^{2l - 1}f (L[\gamma]) = 0,
\]
and hence
\[
\partial_s^{2l - 1} f \equiv 0.
\]
It follows that $f$ has degree at most $2l - 2$. Repeating the same argument for the lower odd-order derivatives, we successively obtain
\[
\partial_s^{2l - 3} f \equiv 0, \quad \partial_s^{2l - 5} f \equiv 0,\quad \dots, \partial_s f \equiv 0.
\]
Therefore, $f$ is constant. On the other hand, by the definition of $\bar{k}$, we have $\int_\gamma f \, ds = 0$. Hence, $f \equiv 0$, and consequently
\[
k \equiv \bar{k}.
\]
Thus, $\gamma$ has constant curvature and is therefore either a straight line segment or a circular arc.
\end{proof}
In what follows, we assume that $(\theta, \theta_-, \theta_+) \in \mathcal{D}$, where
\[
\mathcal{D} := \left\{ (\sigma, \alpha, \beta) \,\middle| \, \sigma \in (0, \pi), \, \alpha, \beta \in (0, 2\pi) \setminus \{ \pi \}, \, | \sigma + \beta - \alpha | < \pi, \, \mu (\sigma, \alpha, \beta) < 1 \right\},
\]
with
\[
\mu (\sigma, \alpha, \beta) := \sqrt{\frac{\cos^2 \alpha + \cos^2 \beta - 2 \cos \sigma \cos \alpha \cos \beta}{\sin^2 \sigma}}.
\]
Let $T_{max} > 0$ be the maximal existence time of the solution to \eqref{MP} and assume $T_{max} < \infty$. To prove the global-in-time solvability of \eqref{MP}, we first prove $E[\gamma] > 0$ by establishing the following lemma, which plays a crucial role in the subsequent analysis.
\begin{lemma}\label{lem:LleqE}
Let $\theta \in (0, \pi)$ and $\theta_\pm \in (0, 2\pi) \setminus \{ \pi \}$. Assume $\gamma : (-1,1) \to \mathbb{R}^2$ is a smooth curve satisfying $\gamma (-1) \in \eta_0$, $\gamma (1) \in \eta_\theta$, ${\bm \tau} (-1) = {\bm v}_{\theta_-}$, and ${\bm \tau} (1) = {\bm v}_{\theta + \theta_+}$. Then
  \begin{align*}
    E[\gamma] \geq (1 - \mu) L[\gamma].
  \end{align*}
Consequently, if $(\theta, \theta_-, \theta_+) \in \mathcal{D}$, then 
  \begin{align*}
    E[\gamma] > 0 \quad \text{and} \quad L[\gamma] \leq \frac{E[\gamma]}{1 - \mu}.
  \end{align*}
\end{lemma}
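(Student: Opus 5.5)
The plan is to make $E[\gamma]-L[\gamma]$ explicit using only the boundary data, and then bound it by the chord length $|\gamma(1)-\gamma(-1)|\le L[\gamma]$.

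\textbf{Step 1: reduce to the endpoints.} Since $\gamma(-1)\in\eta_0$ and $\gamma(1)\in\eta_\theta$, I write $\gamma(-1)=a\,\bv_0$ and $\gamma(1)=b\,\bv_\theta$ with $a,b\in\R$. Because $\la \bv_0,\bv_{\theta_-}\ra=\cos\theta_-$ and $\la \bv_\theta,\bv_{\theta+\theta_+}\ra=\cos\theta_+$, we get
\[
E[\gamma]=L[\gamma]-\bigl(b\cos\theta_+-a\cos\theta_-\bigr).
\]
So the claim is equivalent to
\[
|\,b\cos\theta_+-a\cos\theta_-\,|\le \mu\,L[\gamma].
\]

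\textbf{Step 2: write the deficit as a linear functional of the chord.} I look for a vector $w\in\R^2$ with $\la w,\bv_0\ra=\cos\theta_-$ and $\la w,\bv_\theta\ra=\cos\theta_+$. This system is uniquely solvable because $\sin\theta\neq0$ for $\theta\in(0,\pi)$. Explicitly,
\[
w=\Bigl(\cos\theta_-,\ \frac{\cos\theta_+-\cos\theta_-\cos\theta}{\sin\theta}\Bigr).
\]
Then
\[
b\cos\theta_+-a\cos\theta_-=\la w,\ b\bv_\theta-a\bv_0\ra=\la w,\gamma(1)-\gamma(-1)\ra.
\]

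\textbf{Step 3: compute $|w|$.} Putting the two components over the common denominator $\sin^2\theta$ gives
\[
|w|^2=\frac{\cos^2\theta_-\sin^2\theta+\cos^2\theta_-\cos^2\theta-2\cos\theta\cos\theta_-\cos\theta_++\cos^2\theta_+}{\sin^2\theta}.
\]
Using $\sin^2\theta+\cos^2\theta=1$, the numerator becomes $\cos^2\theta_-+\cos^2\theta_+-2\cos\theta\cos\theta_-\cos\theta_+$, so $|w|=\mu$.

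\textbf{Step 4: conclude.} By Cauchy--Schwarz and the fact that the chord is no longer than the curve,
\[
|\la w,\gamma(1)-\gamma(-1)\ra|\le \mu\,|\gamma(1)-\gamma(-1)|\le \mu \int_{-1}^1|\partial_u\gamma|\,du=\mu L[\gamma].
\]
This gives $E[\gamma]\ge(1-\mu)L[\gamma]$.

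\textbf{Step 5: the consequences.} If $(\theta,\theta_-,\theta_+)\in\mathcal D$, then $\mu<1$, so $L[\gamma]\le E[\gamma]/(1-\mu)$. Positivity $E[\gamma]>0$ follows because $L[\gamma]>0$ for a regular curve.

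The only step needing care is Step 3, which is an identity check. The contact-angle conditions on $\btau(\pm1)$ are not actually used in the estimate; only the fact that the endpoints lie on $\eta_0$ and $\eta_\theta$ matters.
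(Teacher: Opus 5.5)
Your proposal is correct and follows essentially the same route as the paper. Both arguments write the endpoints as $a\bv_0$ and $b\bv_\theta$, express $E[\gamma]-L[\gamma]$ as the pairing of the chord $\gamma(1)-\gamma(-1)$ with a fixed vector of norm $\mu$ (your $w$ is exactly the negative of the paper's vector), and conclude via Cauchy--Schwarz together with $|\gamma(1)-\gamma(-1)|\le L[\gamma]$.
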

\begin{proof}
Since $\gamma (-1) \in \eta_0$ and $\gamma (1) \in \eta_\theta$, there exist constants $r_\pm \in \mathbb{R}$ such that  
\[
\gamma(-1) = r_- {\bm v}_0, \quad \gamma(1) = r_+ {\bm v}_\theta.
\]
Set 
\begin{align*}
{\bm d} 
  &:= \gamma(1) - \gamma (-1) = r_+ {\bm v}_\theta - r_- {\bm v}_0.
\end{align*}
Since the Euclidean distance between the end points is bounded by the length of the curve, we have
\begin{align*}
|{\bm d}| \leq L[\gamma].
\end{align*}
Furthermore, from the definition of $E[\gamma]$, we have
\begin{align*}
  E[\gamma] 
    &= L [\gamma] - \langle \gamma(1), {\bm v}_{\theta + \theta_+} \rangle + \langle \gamma(-1), {\bm v}_{\theta_-} \rangle
      = L [\gamma] - r_+ \cos \theta_+ + r_- \cos \theta_-\\
    &= L [\gamma] - \cos \theta_- (r_+ \cos \theta - r_-) + \frac{\cos \theta \cos \theta_- - \cos \theta_+}{\sin \theta} r_+ \sin \theta\\
    &= L [\gamma] + \left\langle 
         \left( - \cos \theta_-, \frac{\cos \theta \cos \theta_- - \cos \theta_+}{\sin \theta} \right), {\bm d}
          \right\rangle.
\end{align*}
Moreover,
\begin{align*}
\left| \left( - \cos \theta_-, \frac{\cos \theta \cos \theta_- - \cos \theta_+}{\sin \theta} \right) \right|^2 
&= \frac{\cos^2 \theta_- + \cos^2 \theta_+ - 2 \cos \theta \cos \theta_- \cos \theta_+}{\sin^2 \theta}\\ 
&= \mu^2.
\end{align*}
Therefore, by the Cauchy--Schwarz inequality, we have
\begin{align*}
E[\gamma] \geq L[\gamma] - \mu |{\bm d}| \geq (1 - \mu) L[\gamma].
\end{align*}
Finally, since $(\theta, \theta_-, \theta_+) \in \mathcal{D}$, we have $\mu < 1$. Consequently,
\[
E[\gamma] > 0 \quad \text{and} \quad L[\gamma] \leq \frac{E[\gamma]}{1 - \mu}.
\]
\end{proof}
 Combining Lemmas \ref{lem:stationarysol} and \ref{lem:LleqE}, we can further characterize the stationary solutions when $(\theta, \theta_-, \theta_+) \in \mathcal{D}$.
\begin{corollary}\label{cor:k>0arc}
Let $(\theta, \theta_-, \theta_+) \in \mathcal{D}$, and let $\gamma$ be a smooth stationary solution to \eqref{P} satisfying $A[\gamma] > 0$. Then $\gamma$ is a circular arc with positive curvature. 
\end{corollary}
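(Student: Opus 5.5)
By Lemma~\ref{lem:stationarysol}, $\gamma$ has constant curvature $k \equiv \bar{k}$. It is therefore either a straight segment ($k\equiv 0$) or a circular arc with $k \neq 0$. The plan is to exclude $k\equiv 0$ and $k<0$ using the boundary conditions, the condition $|\theta+\theta_+-\theta_-|<\pi$, and $A[\gamma]>0$.

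\textbf{Step 1: the rotation number.} Since $k$ is constant, $\omega[\gamma]=kL[\gamma]$. The tangent angle runs from $\theta_-$ to $\theta+\theta_+$ modulo $2\pi$, so $\omega[\gamma] = \theta+\theta_+-\theta_- + 2\pi n$ for some $n\in\mathbb{Z}$. In the setting of the paper, where (A2) fixes $\omega=\theta+\theta_+-\theta_-$, we simply take this value. Otherwise I would argue as follows. A constant-curvature curve whose endpoints lie on two lines through the origin forming an angle $\theta<\pi$, and which encloses positive area with these lines, cannot wind by more than $\pi$ in absolute value, which forces $n=0$. I expect this point to be somewhat delicate. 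The safest route is to note that the corollary is used under (A2), and to state that we work with the rotation number $\omega=\theta+\theta_+-\theta_-$.

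\textbf{Step 2: exclude the segment.} Suppose $k\equiv 0$. Then $\btau$ is constant, so $\bv_{\theta_-}=\bv_{\theta+\theta_+}$. The curve is a segment from $r_-\bv_0$ to $r_+\bv_\theta$ lying on a single line. Its signed area $A[\gamma]=\tfrac12 \det(\gamma(-1),\gamma(1))$ is the signed area of the triangle with vertices the origin and the two endpoints. This equals $\tfrac12 r_-r_+\sin\theta$ with the appropriate orientation. I would compare it with the direction of travel: the segment points along $\bv_{\theta_-}$ with $\theta_-\in(0,2\pi)\setminus\{\pi\}$, and $\gamma(1)-\gamma(-1)=\lambda\bv_{\theta_-}$ with $\lambda=L>0$. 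Solving for $r_\pm$ gives
\[
r_+\sin\theta = L\sin\theta_-, \qquad r_- = r_+\cos\theta - L\cos\theta_- .
\]
This makes $A=\tfrac12 r_- r_+\sin\theta$ computable. Combined with the relation $\theta+\theta_+\equiv\theta_-$ and $\mu<1$, I would aim to show $A\le 0$. A cleaner alternative is to use $\omega=0$ together with (A2): then $\theta+\theta_+-\theta_-=0$, and Lemma~\ref{lem:LleqE} applied to the segment should give the contradiction through $E=L-\langle \bm d,\bv_{\theta_-}\rangle = L - L = 0$, which contradicts $E>0$. Indeed, since $\bv_{\theta+\theta_+}=\bv_{\theta_-}$, we have $E=L-\langle\gamma(1)-\gamma(-1),\bv_{\theta_-}\rangle=0$. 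This is the argument I would use.

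\textbf{Step 3: exclude negative curvature.} Suppose $k<0$. The arc then turns clockwise. I would use the isoperimetric-type Lemma~\ref{lem:Lbdd} only for positivity of $L$. The main tool is the identity for signed area of a constant-curvature arc, namely
\[
A[\gamma]=\tfrac12\det(\gamma(-1),\gamma(1)) + \tfrac{1}{2k^2}\bigl(kL-\sin(kL)\bigr),
\]
that is, the triangle area plus the circular-segment area, with sign $\operatorname{sign}(k)$. The aim is to show this is $\le 0$ when $k<0$ and $|kL|<\pi$.

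The main obstacle is controlling the sign of the triangle term. Here I would use the same decomposition as in Lemma~\ref{lem:LleqE}, writing the endpoints via $r_\pm$ and the chord direction determined by $\theta_\pm$ and $kL=\omega$. Since $|\omega|<\pi$, the chord direction is $\bv_{(\theta_-+\theta+\theta_+)/2}$. With this, both $r_\pm$ can be expressed through $L$ and the angles, and the sign of $r_-r_+$ can be determined.
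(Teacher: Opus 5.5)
Your Step~2 is correct, and it does not need (A2). Constancy of $\btau$ alone gives $\bv_{\theta_-}=\bv_{\theta+\theta_+}=\btau$ and $\gamma(1)-\gamma(-1)=L\btau$, hence $E[\gamma]=0$. This contradicts $E[\gamma]\ge(1-\mu)L[\gamma]>0$.

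\textbf{The gap is Step~3.} It is only a plan, and its stated target is false as written: $A[\gamma]\le 0$ whenever $k<0$ and $|kL|<\pi$ fails unless you use $\mu<1$, and you never say where $\mu<1$ enters. For a counterexample, take $\theta=\pi/2$ and the clockwise arc from $(1,0)$ to $(0,1)$ with turning angle $-\pi/4$. Then $\theta_-=7\pi/8$, $\theta_+=\pi/8$, and $\theta+\theta_+-\theta_-=-\pi/4=kL$. With $R=\frac{\sqrt2}{2\sin(\pi/8)}$, your own formula gives $A=\frac12-\frac{R^2}{2}\bigl(\frac{\pi}{4}-\frac{\sqrt2}{2}\bigr)\approx 0.37>0$. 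Every condition in $\mathcal{D}$ holds except $\mu<1$, since here $\mu^2=2\cos^2(\pi/8)>1$. Here also $r_-=r_+=1>0$, so fixing the sign of $r_-r_+$ cannot settle the question. You would have to compare the positive triangle term against the negative segment term quantitatively, and that comparison is exactly what $\mu<1$ controls.

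\textbf{The missing idea} is a one-line identity that brings in $\mu<1$ through Lemma~\ref{lem:LleqE}. For any curve of constant curvature $k$ satisfying the end point and contact-angle conditions, integrate $\partial_s\langle\gamma,\btau\rangle=1+k\langle\gamma,\bnu\rangle$ to get
\[
\langle\gamma(1),\bv_{\theta+\theta_+}\rangle-\langle\gamma(-1),\bv_{\theta_-}\rangle=\int_\gamma\partial_s\langle\gamma,\btau\rangle\,ds=L[\gamma]-2kA[\gamma],
\]
that is, $E[\gamma]=2kA[\gamma]$. Since $E[\gamma]>0$ by Lemma~\ref{lem:LleqE} and $A[\gamma]>0$, this gives $k=E[\gamma]/(2A[\gamma])>0$ at once, excluding both $k=0$ and $k<0$. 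Your Step~2 computation is the $k=0$ case of this identity.

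This is how the paper concludes. It excludes the segment separately, by showing $\mu=1$ when $\bv_{\theta_-}=\bv_{\theta+\theta_+}$, which is equivalent to your $E=0$ observation. The identity also makes your Step~1 unnecessary, because the rotation number never enters. So you need neither (A2), which is not a hypothesis of the corollary, nor your unproved claim that the winding is at most $\pi$.
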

\begin{proof}
From Lemma \ref{lem:stationarysol}, we see that $\gamma$ is either a straight line segment or a circular arc. If $\gamma$ is a straight line segment, then its tangent vector is constant, and hence
$v_{\theta_-}=v_{\theta+\theta_+}.$ Therefore, $\theta+\theta_+-\theta_-\in 2\pi\mathbb Z$. Since $(\theta,\theta_-, \theta_+)\in\mathcal D$, we have $|\theta+\theta_+-\theta_-|<\pi$, and consequently $\omega[\gamma]=\theta+\theta_+-\theta_-=0$. Thus we have
\begin{align*}
\mu 
 &= \sqrt{\frac{(\cos \theta_- - \cos \theta \cos \theta_+)^2 - \cos^2 \theta \cos^2 \theta_+ + \cos^2 \theta_+}{\sin^2 \theta}}\\
 &= \sqrt{\frac{(- \sin \theta \sin \theta_+)^2 + \sin^2 \theta \cos^2 \theta_+}{\sin^2 \theta}} =1.
\end{align*}
This contradicts $\mu < 1$. Hence, $\gamma$ is a circular arc with nonzero constant curvature $\tilde{k}$. Moreover, from \eqref{FS}, we have
\begin{align*}
  E[\gamma] 
    &= L[\gamma] - \langle \gamma(1), {\bm v}_{\theta + \theta_+} \rangle + \langle \gamma(-1), {\bm v}_{\theta_-} \rangle\\
    &=L[\gamma] - \int_\gamma \partial_s \langle \gamma, {\bm \tau} \rangle \, ds
    = - \int_\gamma \tilde{k} \langle \gamma, {\bm \nu} \rangle \, ds = 2 \tilde{k} A[\gamma].
\end{align*}
Since $A[\gamma] > 0$ and $E[\gamma] > 0$ by Lemma \ref{lem:LleqE}, it follows that
\[
\tilde{k} = \frac{E[\gamma]}{2 A[\gamma]} > 0.
\]
Therefore, $\gamma$ is a circular arc with positive curvature.
\end{proof}
Using Lemmas \ref{lem:dtmathcalE0}, \ref{lem:dtmathcalE1}, and \ref{lem:LleqE}, we prove the uniform boundedness of $\mathcal{E}_0 [\gamma]$ and $\mathcal{E}_1[\gamma]$ in the following lemma:

\begin{lemma}\label{lem:mathcalE0bdd}
Let $m = 0$ and $(\theta, \theta_-, \theta_+) \in \mathcal{D}$. Then there exists $K^* (m, \theta, \theta_\pm) > 0$ such that the following holds$:$
Assume $\gamma : (-1,1) \times [0, T_{max}) \to \mathbb{R}^2$ is the smooth solution to $\eqref{MP}$ with the initial curve $\gamma_0$ satisfying \eqref{ACF:BC}, \eqref{A1}, \eqref{A2} and \eqref{A3}. Then
\begin{align*}
\mathcal{E}_{0} [\gamma] < K^* \quad \text{for all} \quad t \in [0, T_{max}),
\end{align*}
where $2K^*$ is a unique positive root of
\begin{align}\label{def:g0}
\begin{aligned}
g_0 (x) &:= 2\left(1 - \frac{\omega^2}{\pi^2} \right) - \frac{8}{\pi (1 - \mu)} x
- C_0(\theta, \theta_\pm)\sqrt{x}
= 0,
\end{aligned}
\end{align}
where
\begin{align}
&\quad C_0 (\theta, \theta_\pm)\notag\\
&:= \left( \left( \sqrt{\frac{2}{\pi}} + \frac{4}{\pi} \right) |\omega| + \left( 2 + \frac{1}{\pi} \right) |\cot \theta_+| + \left( 4 + \frac{1}{\pi} \right) |\cot \theta_-| \right) \sqrt{\frac{2}{\pi (1 - \mu)}} \label{eq:C_0}.
\end{align}
\end{lemma}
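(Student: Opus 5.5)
We argue by a continuity (barrier) argument on the scale-invariant quantity $\mathcal{E}_0[\gamma]$. By \eqref{A3}, $\mathcal{E}_0[\gamma_0]<K^*$. Suppose the claim fails, and let $t_0\in(0,T_{max})$ be the first time with $\mathcal{E}_0[\gamma(t_0)]=K^*$. It suffices to show that $\frac{d}{dt}\mathcal{E}_0[\gamma]<0$ whenever $\mathcal{E}_0[\gamma]\le K^*$; in fact we aim to prove it whenever $\mathcal{E}_0<2K^*$. This contradicts the choice of $t_0$.

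\textbf{Step 1: cancel the $\bar k$ terms.} Start from Lemma~\ref{lem:dtmathcalE0}. The first term is nonpositive by Lemma~\ref{lem:dtEdtA}, so drop it. We then need to control
\[
-2\|\partial_s k\|_2^2+\|k-\bar k\|_4^4+3\bar k\int(k-\bar k)^3+2\bar k^2\|k-\bar k\|_2^2 ,
\]
together with the two $G_0$ terms. The key identity is $\bar k=\omega/L$, which follows from Lemma~\ref{lem:rot-invariant} and \eqref{A2}. Lemma~\ref{lem:Poincare1} gives $\|k-\bar k\|_2^2\le \frac{L^2}{\pi^2}\|\partial_s k\|_2^2$, hence
\[
2\bar k^2\|k-\bar k\|_2^2\le 2\frac{\omega^2}{\pi^2}\|\partial_s k\|_2^2 .
\]
This produces the main coefficient $-2(1-\omega^2/\pi^2)$, which is negative because $|\omega|<\pi$ on $\mathcal D$.

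\textbf{Step 2: absorb the nonlinear terms.} For the quartic term, $\|k-\bar k\|_4^4\le\|k-\bar k\|_\infty^2\|k-\bar k\|_2^2$. Lemma~\ref{lem:Poincare1} bounds $\|k-\bar k\|_\infty^2\le\frac{2L}{\pi}\|\partial_sk\|_2^2$, and Lemma~\ref{lem:LleqE} gives $L\le E/(1-\mu)$. Together these give
\[
E\|k-\bar k\|_4^4\le \frac{2}{\pi(1-\mu)}\,\mathcal{E}_0\,\|\partial_sk\|_2^2 .
\]
The cubic term $3|\bar k|\int|k-\bar k|^3$ is bounded by $3\frac{|\omega|}{L}\|k-\bar k\|_\infty\|k-\bar k\|_2^2$. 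Combining with Poincaré, this is at most a constant times $|\omega|\sqrt{\mathcal E_0}$ times $\|\partial_sk\|_2^2$, once we also use $L\le E/(1-\mu)$ or $L^2\ge 2\theta A$. These bounds are the source of the $x$ and $\sqrt x$ terms in $g_0$.

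\textbf{Step 3: the tangential terms (the main obstacle).} With $F_0=-(k-\bar k)$, the definition \eqref{def:Gmk} gives
\[
\partial_sG_0=kF_0-\frac{1}{L}\Bigl(\int kF_0+F_0(1)\cot\theta_+-F_0(-1)\cot\theta_-\Bigr)
\]
and $|G_0|\le \int|kF_0|+|F_0(1)\cot\theta_+|+2|F_0(-1)\cot\theta_-|$. This is where the $|\cot\theta_+|$ and $|\cot\theta_-|$ coefficients with weights $2$ and $4$ in $C_0$ come from.

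Integrate $2\int(k-\bar k)\partial_skG_0=\int\partial_s(k-\bar k)^2G_0$ by parts. The boundary terms $[(k-\bar k)^2G_0]$ combine with $\int(k-\bar k)^2\partial_sG_0$, and the interior parts $\int(k-\bar k)^2kF_0$ partly cancel. The terms that remain are:
\begin{itemize}
\item endpoint values $|k-\bar k|(\pm1)\le\|k-\bar k\|_\infty$ multiplied by $\cot\theta_\pm$;
\item $\int kF_0=-\|k-\bar k\|_2^2$, using $\int(k-\bar k)=0$.
\end{itemize}
Bound each term by $\|k-\bar k\|_\infty^3$ or $\|k-\bar k\|_\infty\|k-\bar k\|_2^2$, then apply Poincaré and $L\le E/(1-\mu)$. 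Each becomes a multiple of $\sqrt{\mathcal E_0}\,\|\partial_sk\|_2^2$ or of $\mathcal E_0\|\partial_sk\|_2^2$. Careful bookkeeping of the constants $\sqrt{2/\pi}$ and $1/\pi$ should reproduce exactly $C_0$ and the coefficient $8/(\pi(1-\mu))$.

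\textbf{Step 4: conclude.} Collecting the bounds,
\[
\frac{d}{dt}\mathcal E_0\le -E\|\partial_sk\|_2^2\; g_0(\mathcal E_0).
\]
Since $g_0$ is strictly decreasing with $g_0(0)>0$ and root $2K^*$, we have $g_0(\mathcal E_0)>0$ whenever $\mathcal E_0<2K^*$. Hence $\mathcal E_0$ is nonincreasing there, so it stays strictly below $K^*$, which is the required contradiction.

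If $\|\partial_sk\|_2=0$ at some time, the derivative is $\le0$ anyway, so the conclusion still holds.
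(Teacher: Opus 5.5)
Your argument has the same structure as the paper's proof. Both use a first-touching-time contradiction, drop the $\frac{d}{dt}E$ term, use $\bar k=\omega/L$, Lemma~\ref{lem:Poincare1} and $L\le E/(1-\mu)$ to bound every term by $\mathcal E_0$ or $\sqrt{\mathcal E_0}$ times $E\|\partial_sk\|_2^2$, and conclude from $g_0>0$ on $[0,2K^*)$. Your Steps 1, 2 and 4 match the paper's bounds for $I_1,I_2,I_3$, with two slips:
your quartic bound has lost a factor $E$ on the right, and $F_0=k-\bar k$, not $-(k-\bar k)$, so $\int_\gamma kF_0\,ds=+\|k-\bar k\|_2^2$.

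The real divergence is Step 3. The paper does not integrate by parts there. It writes out $G_0$ and $\partial_sG_0$, bounds $I_4\le 2E\|G_0\|_\infty\|k-\bar k\|_2\|\partial_sk\|_2$, and bounds $I_5$ term by term, so that Lemma~\ref{lem:Poincare1} and H\"older suffice. That is exactly what produces $C_0$ and the coefficient $8/(\pi(1-\mu))$. Your integration by parts is stronger than you say, because the cancellation is complete:
\[
2(k-\bar k)\partial_sk\,G_0+(k-\bar k)^2\partial_sG_0=\partial_s\bigl[(k-\bar k)^2G_0\bigr].
\]
Hence, by \eqref{eq:alphapm1},
\[
I_4+I_5=E\bigl[(k-\bar k)^2G_0\bigr]_{-1}^{1}=E\bigl((k-\bar k)^3(-1)\cot\theta_--(k-\bar k)^3(1)\cot\theta_+\bigr).
\]
No $\int kF_0$ term survives, and your bound on $|G_0|$ is not needed.

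What remains is $E(|\cot\theta_+|+|\cot\theta_-|)\|k-\bar k\|_\infty^3$, and here your write-up does not close. Lemma~\ref{lem:Poincare1} alone only gives $\|k-\bar k\|_\infty^3\le (2L/\pi)^{3/2}\|\partial_sk\|_2^3$. That is not a function of $\mathcal E_0$ times $E\|\partial_sk\|_2^2$.

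You need the Agmon-type bound $\|k-\bar k\|_\infty^2\le 2\|k-\bar k\|_2\|\partial_sk\|_2$. It holds because $k-\bar k$ has mean zero and therefore vanishes somewhere. Combined with Lemma~\ref{lem:Poincare1} and $L\le E/(1-\mu)$, it gives $E\|k-\bar k\|_\infty^3\le 2\sqrt{2/(\pi(1-\mu))}\,\sqrt{\mathcal E_0}\,E\|\partial_sk\|_2^2$.

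With this repair your route is correct and cleaner than the paper's. The tangential velocity acts only through the boundary flux, so the $G_0$-terms contribute nothing linear in $\mathcal E_0$ and nothing involving $\omega$.

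It does \emph{not}, however, reproduce $C_0$ and $8/(\pi(1-\mu))$ exactly, contrary to your claim. You get the smaller coefficients $2/(\pi(1-\mu))$ for $x$ and $\bigl(\tfrac{3}{\pi}|\omega|+2|\cot\theta_+|+2|\cot\theta_-|\bigr)\sqrt{2/(\pi(1-\mu))}$ for $\sqrt x$. The correct conclusion is that your resulting function $\tilde g_0$ satisfies $\tilde g_0\ge g_0$. Hence $\tilde g_0>0$ on $[0,2K^*)$, and the lemma holds with the paper's $K^*$.
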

\begin{proof}
We prove Lemma \ref{lem:mathcalE0bdd} by contradiction. Assume that there exists $T^* \in (0,T_{max})$ such that $\mathcal{E}_0 [\gamma](t) < K^*$ for $t \in [0,T^*)$ and
$\mathcal{E}_0 [\gamma](T^*) = K^*$. From Lemmas \ref{lem:dtmathcalE0} and \ref{lem:LleqE}, we have
\begin{align*}
\frac{d}{dt} \mathcal{E}_0 [\gamma]
&\leq - 2 E [\gamma] \|\partial_s k \|_2^2
+ E [\gamma] \int_\gamma (k - \bar{k})^4 \, ds
+ 3 \bar{k} E [\gamma] \int_\gamma (k - \bar{k})^3 \, ds\\
&\quad + 2 \bar{k}^2 E [\gamma] \| k - \bar{k} \|_2^2
+ 2 E [\gamma] \int_\gamma (k - \bar{k}) \partial_s k G_0[k] \, ds\\
&\quad + E [\gamma] \int_\gamma (k - \bar{k})^2 \partial_s G_0[k] \, ds\\
&=: - 2 E [\gamma] \| \partial_s k \|_2^2 + I_1 + I_2 + I_3 + I_4 + I_5.
\end{align*}
Using Lemmas \ref{lem:Poincare1}, \ref{lem:LleqE}, and H{\"o}lder's inequality, we have
\begin{align*}
I_1 &\leq E [\gamma] \| k - \bar{k} \|_\infty^2 \| k -\bar{k} \|_2^2
\leq \left( \frac{ 2 L[\gamma]}{\pi} \right) \mathcal{E}_0 [\gamma] \| \partial_s k \|_2^2\\
&\leq \frac{2}{\pi(1- \mu)} \mathcal{E}_0 [\gamma] E[\gamma] \| \partial_s k \|_2^2,\\
I_2 &\leq 3 |\bar{k}| E [\gamma] \| k - \bar{k} \|_\infty^2 \left( L[\gamma]^{\frac{1}{2}} \| k - \bar{k} \|_2 \right)\\
&\leq \frac{3}{\pi} \sqrt{\frac{2}{\pi (1 - \mu)}} |\omega| \sqrt{ \mathcal{E}_0 [\gamma]} E[\gamma] \| \partial_s k \|_2^2.
\end{align*}
From Lemmas \ref{lem:Poincare1} and \ref{lem:LleqE}, we have
\begin{align*}
I_3 &\leq 2 \frac{\omega^2}{\pi^2} E [\gamma] \| \partial_s k \|_2^2.
\end{align*}
Here, we observe from the definition of $G_m[k]$ that
\begin{align*}
G_0[k]
&= \int_0^s (k - \bar{k})^2 \, d\tilde{s}
+ \bar{k} \int_0^s (k - \bar{k}) \, d\tilde{s}
+ \frac{\frac{d}{dt} L[\gamma]}{L[\gamma]} s
- (k (-1, t) - \bar{k}) \cot \theta_-,\\
\partial_s G_0 [k]
&= (k - \bar{k})^2
+ \bar{k} (k - \bar{k})
+ \frac{\frac{d}{dt} L[\gamma]}{L[\gamma]}.
\end{align*}
This together with \eqref{eq:dtLGamma}, Lemmas \ref{lem:Poincare1}, \ref{lem:LleqE}, and H{\"o}lder's inequality implies that
\begin{align*}
&\quad I_4\\
&\leq 2 E [\gamma] \| \partial_s k \|_2 \| k - \bar{k} \|_2^3
+ 2 E [\gamma] |\bar{k}| \left( L[\gamma]^{\frac{1}{2}} \| k - \bar{k} \|_2 \right)
\| k - \bar{k} \|_2 \| \partial_s k \|_2\\
&\quad + 2 E [\gamma] \left( \| k - \bar{k} \|_2^2 + \| k - \bar{k} \|_\infty |\cot \theta_+|
+ \| k - \bar{k} \|_\infty | \cot \theta_- |\right) \| k - \bar{k} \|_2 \| \partial_s k \|_2\\
&\quad + 2 E[\gamma] |\cot \theta_-| \| k - \bar{k} \|_\infty \| k - \bar{k} \|_2 \| \partial_s k \|_2\\
&\leq \frac{4}{\pi (1 - \mu)} \mathcal{E}_0 [\gamma] E [\gamma] \| \partial_s k \|_2^2
+ \frac{2 |\omega|}{\pi \sqrt{1 - \mu}} \sqrt{ \mathcal{E}_0 [\gamma]} E [\gamma] \| \partial_s k \|_2^2\\
&\quad + 2\sqrt{\frac{2}{\pi (1 - \mu)}} |\cot \theta_+| \sqrt{ \mathcal{E}_0 [\gamma] } E[\gamma] \| \partial_s k \|_2^2\\
&\quad + 4 \sqrt{\frac{2}{\pi (1 - \mu)}} |\cot \theta_-| \sqrt{ \mathcal{E}_0 [\gamma]} E[\gamma] \| \partial_s k \|_2^2,
\end{align*}
and
\begin{align*}
I_5 &\leq E[\gamma] \| k - \bar{k} \|_\infty^2 \| k - \bar{k} \|_2^2
+ E [\gamma] |\bar{k}| \| k - \bar{k} \|_\infty \| k - \bar{k} \|_2^2
- \frac{E [\gamma]}{L[\gamma]} \| k - \bar{k} \|_2^4\\
&\quad + \frac{E [\gamma]}{L [\gamma]} |\cot \theta_+| \| k - \bar{k} \|_\infty \| k - \bar{k} \|_2^2
+ \frac{E [\gamma]}{L [\gamma]} |\cot \theta_-| \| k - \bar{k} \|_\infty \| k - \bar{k} \|_2^2\\
&\leq \frac{2}{\pi (1 - \mu)} \mathcal{E}_0 E[\gamma] \| \partial_s k \|_2^2 + \frac{1}{\pi} \sqrt{\frac{2}{\pi (1 - \mu)}} |\omega| \sqrt{\mathcal{E}_0} E[\gamma] \| \partial_s k \|_2^2\\
&\quad + \frac{1}{ \pi} \sqrt{\frac{2}{\pi (1 - \mu)}} |\cot \theta_+| \sqrt{ \mathcal{E}_0 [\gamma]} E [\gamma] \| \partial_s k \|_2^2\\
&\quad + \frac{1}{\pi} \sqrt{\frac{2}{\pi (1 - \mu)}} |\cot \theta_-| \sqrt{ \mathcal{E}_0 [\gamma]} E [\gamma] \| \partial_s k \|_2^2.
\end{align*}
Combining the above, we have
\begin{equation*}
\begin{aligned}
\frac{d}{dt} \mathcal{E}_0 [\gamma]
&\leq - \left[ 2\left(1 - \frac{\omega^2}{\pi^2} \right) - \frac{8}{\pi (1 - \mu)} \mathcal{E}_0 [\gamma]
- C_0(\theta, \theta_\pm)\sqrt{ \mathcal{E}_0 [\gamma] } \right] E[\gamma] \| \partial_s k \|_2^2,
\end{aligned}
\end{equation*}
where $C_0 (\theta, \theta_\pm)$ is defined by \eqref{eq:C_0}.
From Lemma \ref{lem:LleqE} and \eqref{def:g0}, we define $\delta_0 := g_0 (K^*) > 0$ and we have
\begin{align}\label{eq:step1}
\frac{d}{dt} \mathcal{E}_0 [\gamma]
\leq - \delta_0 E[\gamma] \| \partial_s k \|_2^2 \leq 0
\end{align}
for all $t \in [0, T^*]$. This contradicts the assumption that $\mathcal{E}_0 [\gamma] (T^*) = K^*$ and this completes the proof.
\end{proof}
Here we define
\[
N_n^l := \left\{(p_1, p_2, p_3, \dots , p_n) \,\middle|\, p_1, p_2, p_3, \dots , p_n \in \mathbb{N} \cup \{ 0 \}, \,\, \sum_{j= 1}^n p_j = l \right\}.
\]
Using it, we prove the following lemma:

\begin{lemma}\label{lem:mathcalE1bdd}
Let $m \in \N$ and $(\theta, \theta_-, \theta_+) \in \mathcal{D}$. Then there exists $K^* (m, \theta, \theta_\pm) > 0$ such that the following holds$:$ Assume $\gamma : (-1,1) \times [0, T_{max}) \to \R^2$ is the smooth solution to \eqref{MP} with the initial curve $\gamma_0$ satisfying \eqref{ACF:BC}, \eqref{A1}, \eqref{A2} and \eqref{A3}. Then
  \begin{align*}
    \mathcal{E}_1 [\gamma] < K^* \quad \text{for all} \quad t \in [0, T_{max}),
  \end{align*}
where $2K^*$ is a unique positive root of
   \begin{align}\label{def:gm}
    \begin{aligned}
    g_m (x) &:= 2\Big(1 - \frac{\omega^2}{\pi^2} \Big) - \Bigg( \sum_{N_3^{m - 1}} \frac{(m - 1)!}{p_1! p_2! p_3!} A_{p_1 p_2 p_3} \Bigg) \frac{5}{\pi^3 (1 - \mu)^3} x\\
    &\quad - \Bigg( \sum_{N_2^{m - 1}} \frac{(m - 1)!}{p_1! p_2!} B_{p_1 p_2} \Bigg) \frac{5 |\omega|}{\pi^\frac{3}{2} (1 - \mu)^{\frac{3}{2}}} \sqrt{x}\\
    &\quad - \Bigg( \sum_{N_3^m} \frac{m !}{p_1! p_2! p_3!} C_{p_1 p_2 p_3} \Bigg) \frac{2}{\pi^3 (1 - \mu)^3} x\\
    &\quad - \Bigg( \sum_{N_2^m} \frac{m !}{p_1! p_2!} D_{p_1 p_2} \Bigg) \frac{4 |\omega|}{\pi^\frac{3}{2} (1 - \mu)^{\frac{3}{2}}} \sqrt{x}
    = 0,
   \end{aligned}
  \end{align}
with constants $A_{p_1 p_2 p_3}, B_{p_1 p_2}, C_{p_1 p_2 p_3}, D_{p_1 p_2} > 0$ depending only on $p_1$, $p_2$ and $p_3$.
\end{lemma}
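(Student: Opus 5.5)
We argue by contradiction, exactly as in the proof of Lemma~\ref{lem:mathcalE0bdd}. Suppose there is a first time $T^*\in(0,T_{max})$ with $\mathcal{E}_1[\gamma](t)<K^*$ on $[0,T^*)$ and $\mathcal{E}_1[\gamma](T^*)=K^*$. We start from the identity of Lemma~\ref{lem:dtmathcalE1} and discard the first term, since $\frac{d}{dt}E[\gamma]\le 0$ by Lemma~\ref{lem:dtEdtA}. The goal is then to show that on $[0,T^*]$,
\[
\frac{d}{dt}\mathcal{E}_1[\gamma]\le -g_m(\mathcal{E}_1[\gamma])\,E[\gamma]^3\|\partial_s^{m+2}k\|_2^2 .
\]
Since $g_m$ is decreasing in $x>0$ and $g_m(2K^*)=0$, we have $g_m(K^*)>0$. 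Hence $\mathcal{E}_1$ is nonincreasing up to $T^*$, so $\mathcal{E}_1[\gamma](T^*)\le \mathcal{E}_1[\gamma_0]<K^*$, which is a contradiction.

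The quadratic term $2\bar k^2E^3\|\partial_s^{m+1}k\|_2^2$ is handled first. We write $\bar k=\omega/L[\gamma]$, with $\omega$ constant by Lemma~\ref{lem:rot-invariant}. The function $\partial_s^{m+1}k$ has either zero boundary values (when $m+1$ is odd, by Lemma~\ref{lem:ks2l+1}) or zero mean (when $m+1$ is even, since $\partial_s^{m}k$ has vanishing boundary values). In both cases Lemma~\ref{lem:Poincare1} or Lemma~\ref{lem:Poincare2} gives
\[
\|\partial_s^{m+1}k\|_2^2\le \frac{L^2}{\pi^2}\|\partial_s^{m+2}k\|_2^2,
\]
so this term is at most $\frac{2\omega^2}{\pi^2}E^3\|\partial_s^{m+2}k\|_2^2$. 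This produces the constant $2(1-\omega^2/\pi^2)$ in $g_m$.

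The remaining four integrals are the nonlinear terms. In each of them we substitute $F_m[k]=(-1)^m\partial_s^{2m}k$ and, where useful, integrate by parts $m-1$ (respectively $m$) times so that the derivatives are distributed. The boundary terms vanish because odd derivatives of $k$ vanish at $u=\pm1$ (Lemma~\ref{lem:ks2l+1}); we split the parity cases if needed. Expanding by the Leibniz rule gives sums over $N_3^{m-1}$, $N_2^{m-1}$, $N_3^{m}$ and $N_2^{m}$ with multinomial coefficients, which matches the structure of $g_m$. Each monomial is a product of three or four factors of the form $\partial_s^{q}(k-\bar k)$ (with $\bar k$ factors pulled out as $|\omega|/L$), whose orders sum to $2m+2$, or $2m+1$ when multiplied by $\bar k$.

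We bound each monomial by H\"older, placing two factors in $L^2$ and the rest in $L^\infty$. We then apply Lemma~\ref{lem:k-interpolation} to $f=k-\bar k$, which satisfies \eqref{eq:k-interpolation} by Lemma~\ref{lem:ks2l+1}, with top order $m+2$. After that we use Poincaré in the form $\|k-\bar k\|_2\le \frac{L}{\pi}\|\partial_s k\|_2$ to reduce everything to the two quantities $\|\partial_s k\|_2$ and $\|\partial_s^{m+2}k\|_2$. We expect every quartic term to come out as
\[
C\,L^{3}\|\partial_s k\|_2^2\,\|\partial_s^{m+2}k\|_2^2 ,
\]
and every cubic term multiplied by $\bar k$ to come out as
\[
C\,|\omega|\,L^{3/2}\|\partial_s k\|_2\,\|\partial_s^{m+2}k\|_2^2 .
\]
Finally Lemma~\ref{lem:LleqE} gives $L\le E/(1-\mu)$, which converts these into $\frac{C}{(1-\mu)^3}\mathcal{E}_1\,E^3\|\partial_s^{m+2}k\|_2^2$ and $\frac{C|\omega|}{(1-\mu)^{3/2}}\sqrt{\mathcal{E}_1}\,E^3\|\partial_s^{m+2}k\|_2^2$. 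The numerical constants $A,B,C,D$ are absorbed accordingly.

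The main obstacle is this interpolation bookkeeping. We need to check that for every monomial the interpolation exponents add up to exactly two powers of $\|\partial_s^{m+2}k\|_2$, so that no factor $\|\partial_s^{m+2}k\|_2^{2+\epsilon}$ appears that would require Young's inequality and break the clean form of $g_m$. It is also delicate to pick the integration by parts so that no factor carries more than $m+1$ derivatives. The way we plan to handle this is to track the scaling: $\mathcal{E}_1$ and $E^3\|\partial_s^{m+2}k\|_2^2\,/\,\frac{d}{dt}$ are both scale invariant. This scale invariance forces the homogeneity, and the exponent count reduces to checking that $\alpha$-weights from Lemma~\ref{lem:k-interpolation} (with derivatives counted from order one) sum to $2$.
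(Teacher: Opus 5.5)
Your proposal is correct and follows essentially the paper's proof. The paper takes the same steps: the first-time contradiction argument, the Poincar\'e bound $2\bar k^2E[\gamma]^3\|\partial_s^{m+1}k\|_2^2\le \frac{2\omega^2}{\pi^2}E[\gamma]^3\|\partial_s^{m+2}k\|_2^2$, integration by parts $m-1$ (resp.\ $m$) times with boundary terms killed by Lemma~\ref{lem:ks2l+1}, the Leibniz expansion over $N_3^{m-1},N_2^{m-1},N_3^m,N_2^m$, H\"older, Lemma~\ref{lem:k-interpolation} with top order $m+2$, Poincar\'e, and $L[\gamma]\le E[\gamma]/(1-\mu)$. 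It differs from yours only in using the H\"older split $L^8,L^8,L^4,L^2$ (resp.\ $L^4,L^4,L^2$) and in leaving the factor $\|\partial_s^{m+1}(k-\bar k)\|_2$ uninterpolated, bounding it by one Poincar\'e step.

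There are two bookkeeping slips in your write-up. First, the $\bar k$-weighted cubic terms also have total derivative order $2m+2$, not $2m+1$. Second, the interpolation weights do not add up to exactly $2$ but to $2-\frac{1}{m+2}$ (quartic) and $2-\frac{3}{2(m+2)}$ (cubic). The missing fractional power of $\|\partial_s^{m+2}k\|_2$ comes from the Poincar\'e chain $\|k-\bar k\|_2\le (L/\pi)^{m+2}\|\partial_s^{m+2}k\|_2$. Together with $\|k-\bar k\|_2\le \frac{L}{\pi}\|\partial_s k\|_2$, this gives exactly the factors $L^3$ and $|\omega|L^{3/2}$ you predicted, and no Young inequality is needed because the sums stay below $2$.
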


\begin{proof}
For notational simplicity, we write $\bar{K} := k - \bar{k}$ through out the proof. We prove Lemma \ref{lem:mathcalE1bdd} by contradiction. Assume that there exists $T^* \in (0,T_{max})$ such that $\mathcal{E}_1 [\gamma](t) <~K^*$ for $t \in [0,T^*)$ and 
$\mathcal{E}_1 [\gamma](T^*) = K^*$. From Lemmas \ref{lem:dtmathcalE1} and \ref{lem:LleqE}, we have
  \begin{align*}
    \frac{d}{dt} \mathcal{E}_1 [\gamma]  
       &\leq - 2 E [\gamma]^3 \| \partial_s^{m + 2} k \|_2^2
         + 5 E [\gamma]^3 \int_\gamma \bar{K} ( \partial_s k)^2 F_m[k] \, ds\\
      &\quad + 5 \bar{k} E [\gamma]^3 \int_\gamma ( \partial_s k)^2 F_m[k] \, ds
        + 2 E [\gamma]^3 \int_\gamma \bar{K}^2 \partial_s k \partial_s F_m[k] \, ds\\
      &\quad + 4 \bar{k} E [\gamma]^3 \int_\gamma \bar{K} \partial_s k \partial_s F_m[k] \, ds
       + 2 \bar{k}^2 E [\gamma]^3 \| \partial_s^{m + 1} k \|_2^2\\
      &=: - 2 E [\gamma]^3 \| \partial_s^{m+2} k \|_2^2 + I_1 + I_2 + I_3 + I_4 + I_5.
  \end{align*}
Using Lemmas \ref{lem:Poincare1}, \ref{lem:Poincare2}, \ref{lem:k-interpolation}, \ref{lem:ks2l+1}, and \ref{lem:LleqE}, we have
  \begin{align*}
    I_1 &= - 5 E [\gamma]^3 \int_\gamma \partial_s^{m - 1} \left[ \bar{K}(\partial_s k)^2 \right] \partial_s^{m + 1}\bar{K} \, ds\\
          &= - 5 E [\gamma]^3 \sum_{N_3^{m - 1}} \int_\gamma \frac{(m - 1)!}{p_1! p_2! p_3!} \partial_s^{p_1} \bar{K} \partial_s^{p_2 + 1} \bar{K}
          \partial_s^{p_3 + 1} \bar{K} \partial_s^{m + 1} \bar{K} \, ds\\
          &\leq 5 E [\gamma]^3 \sum_{N_3^{m - 1}} \Bigg( \frac{(m - 1)!}{p_1! p_2! p_3!} \| \partial_s^{p_1} \bar{K} \|_8 \| \partial_s^{p_2 + 1} \bar{K} \|_8 
          \| \partial_s^{p_3 + 1} \bar{K} \|_4 \| \partial_s^{m +1} \bar{K} \|_2 \Bigg)\\
          &\leq 5 E [\gamma]^3 \left( \sum_{N_3^{m - 1}} \frac{(m - 1)!}{p_1! p_2! p_3!} A_{p_1 p_2 p_3} \right) \| \bar{K} \|_2^2
             \| \partial_s^{m+1} k \|_2 \| \partial_s^{m + 2} k \|_2\\
          &\leq \left( \sum_{N_3^{m -1}} \frac{(m - 1)!}{p_1! p_2! p_3!} A_{p_1 p_2 p_3} \right) \frac{5}{\pi^3(1 - \mu)^3} \mathcal{E}_1 [\gamma] E [\gamma]^3 \| \partial_s^{m+2} k \|_2^2,
  \end{align*}
  \begin{align*}
    I_2 &= - 5 \bar{k} E [\gamma]^3 \int_\gamma \partial_s^{m -1} ( \partial_s \bar{K} )^2 \partial_s^{m + 1} \bar{K} \, ds\\
          &= - 5 \bar{k} E [\gamma]^3 \sum_{N_2^{m- 1}} \int_\gamma \frac{(m - 1)!}{p_1! p_2!} \partial_s^{p_1 + 1} \bar{K}
                   \partial_s^{p_2 + 1} \bar{K} \partial_s^{m + 1} \bar{K} \, ds\\
          &\leq 5 |\bar{k}| E [\gamma]^3 \sum_{N_2^{m - 1}} \Bigg( \frac{(m - 1)!}{p_1! p_2!} \| \partial_s^{p_1 + 1} \bar{K} \|_4 \| \partial_s^{p_2 + 1} \bar{K} \|_4 \| \partial_s^{m + 1} \bar{K} \|_2 \Bigg)\\
          &\leq 5 |\bar{k}| E [\gamma]^3 \left( \sum_{N_2^{m - 1}} \frac{(m - 1)!}{p_1! p_2!} B_{p_1p_2} \right) \| \bar{K} \|_2^{1+ \alpha_m} \| \partial_s^{m+2} k\|_2^{1- \alpha_m} \| \partial_s^{m + 1} k\|_2\\
          &\leq \left( \sum_{N_2^{m - 1}} \frac{(m - 1)!}{p_1! p_2!} B_{p_1 p_2} \right) \frac{5 |\omega|}{\pi^\frac{3}{2} (1 - \mu)^{\frac{3}{2}}} \sqrt{\mathcal{E}_1 [\gamma]} E [\gamma]^3 \| \partial_s^{m+2} k \|_2^2,
  \end{align*}
  \begin{align*}
    I_3 &= 2 E[\gamma]^3 \int_\gamma \partial_s^{m} \left(\bar{K}^2 \partial_s k \right) \partial_s^{m + 1} \bar{K} \, ds\\
          & = 2 E [\gamma]^3 \sum_{N_3^m} \int_\gamma \frac{m !}{p_1! p_2! p_3!} \partial_s^{p_1} \bar{K} \partial_s^{p_2}\bar{K} \partial_s^{p_3 + 1} \bar{K} \partial_s^{m + 1} \bar{K} \, ds\\
          &\leq 2 E [\gamma]^3 \sum_{N_3^m} \Bigg( \frac{m !}{p_1! p_2! p_3!} \| \partial_s^{p_1} \bar{K} \|_8 \| \partial_s^{p_2} \bar{K} \|_8 \| \partial_s^{p_3 + 1} \bar{K} \|_4 \| \partial_s^{m + 1} \bar{K} \|_2 \Bigg)\\
         &\leq 2 E [\gamma]^3 \left( \sum_{N_3^m} \frac{m !}{p_1! p_2! p_3!} C_{p_1 p_2 p_3} \right) \|\bar{K}\|_2^2 \| \partial_s^{m+1} k \|_2 \| \partial_s^{m+2} k \|_2\\
         &\leq \left( \sum_{N_3^m} \frac{m !}{p_1! p_2! p_3!} C_{p_1 p_2 p_3} \right) \frac{2}{\pi^3 (1 - \mu)^3} \mathcal{E}_1 [\gamma] E[\gamma]^3 \| \partial_s^{m + 2} k \|_2^2,
  \end{align*}
and
  \begin{align*}
    I_4 &= 4 \bar{k} E[\gamma]^3 \int_\gamma \partial_s^m \left( \bar{K} \partial_s \bar{K} \right) \partial_s^{m + 1} \bar{K} \, ds\\
          &= 4 \bar{k} E [\gamma]^3 \sum_{(p_1, p_2) \in N_2^m} \frac{m !}{p_1! p_2!} \int_\gamma \partial_s^{p_1} \bar{K} \partial_s^{p_2 +1} \bar{K} \partial_s^{m + 1} \bar{K} \, ds\\
          &\leq 4 |\bar{k}| E [\gamma]^3 \Bigg( \sum_{N_2^m} \frac{m !}{p_1! p_2!} \| \partial_s^{p_1} \bar{K} \|_4 \| \partial_s^{p_2 + 1} \bar{K} \|_4 \| \partial_s^{m + 1} \bar{K} \|_2 \Bigg)\\
          &\leq 4 |\bar{k}| E [\gamma]^3 \left( \sum_{N_2^m} \frac{m !}{p_1! p_2!} D_{p_1 p_2} \right) \| \bar{K} \|_2^{1 + \alpha_m} \| \partial_s^{m + 2} k \|_2^{1 - \alpha_m} \| \partial_s^{m + 1} k \|_2\\
          &\leq \left( \sum_{N_2^m} \frac{m !}{p_1! p_2!} D_{p_1 p_2} \right) \frac{4 |\omega|}{\pi^\frac{3}{2} (1 - \mu)^{\frac{3}{2}}} \sqrt{ \mathcal{E}_1 [\gamma]} E [\gamma]^3 \| \partial_s^{m + 2} k \|_2^2,
  \end{align*}
where $\alpha_m := \frac{1}{2(m+2)}$ and $A_{p_1 p_2 p_3}$, $B_{p_1 p_2}$, $C_{p_1 p_2 p_3}$ and $D_{p_1 p_2}$ are positive constants depending only on $p_1$, $p_2$ and $p_3$. Furthermore, from Lemmas \ref{lem:Poincare1}, \ref{lem:Poincare2} and \ref{lem:LleqE}, we have
  \begin{align*}
    I_5 \leq 2 \frac{\omega^2}{\pi^2} E [\gamma]^3 \| \partial_s^{m + 2} k \|_2^2.
  \end{align*}
Combining them, we have
  \begin{align*}
    &\quad \frac{d}{dt} \mathcal{E}_1 [\gamma]\\
    &\leq - \left[ 2\left(1 - \frac{\omega^2}{\pi^2} \right) - \left( \sum_{N_3^{m - 1}} \frac{(m - 1)!}{p_1! p_2! p_3!} A_{p_1 p_2 p_3} \right) \frac{5}{\pi^3 (1 - \mu)^3} \mathcal{E}_1 [\gamma] \right. \\
    &\quad\quad\quad - \left( \sum_{N_2^{m - 1}} \frac{(m - 1)!}{p_1! p_2!} B_{p_1 p_2} \right) \frac{5 |\omega|}{\pi^\frac{3}{2} (1 - \mu)^{\frac{3}{2}}} \sqrt{\mathcal{E}_1 [\gamma]}\\
    &\quad \quad\quad - \left( \sum_{N_3^m} \frac{m !}{p_1! p_2! p_3!} C_{p_1 p_2 p_3} \right) \frac{2}{\pi^3 (1 - \mu)^3} \mathcal{E}_1 [\gamma]\\
    &\quad \quad\quad \left. - \left( \sum_{N_2^m} \frac{m !}{p_1! p_2!} D_{p_1 p_2} \right) \frac{4 |\omega|}{\pi^\frac{3}{2} (1 - \mu)^{\frac{3}{2}}} \sqrt{ \mathcal{E}_1 [\gamma]} \right] E[\gamma]^3 \| \partial_s^{m+2} k \|_2^2.
  \end{align*}
From Lemma \ref{lem:LleqE} and \eqref{def:gm}, we define $\delta_m := g_m (K^*) > 0$ and we have
  \begin{align}\label{eq:step2}
    \frac{d}{dt} \mathcal{E}_1 [\gamma]
      \leq - \delta_m E[\gamma]^3 \| \partial_s^{m + 2} k \|_2^2 \leq 0
  \end{align}
for all $t \in [0, T^*]$. This contradicts the assumption that $\mathcal{E}_1[\gamma] (T^*) = K^*$ and this completes the proof.
\end{proof}

Here we define 
\begin{align*}
  \Phi [\gamma] := 
    \begin{cases}
      \mathcal{E}_0 [\gamma]  \quad \text{if} \quad m = 0,\\
      \mathcal{E}_1 [\gamma]  \quad \text{if} \quad m \geq 1,
    \end{cases}
\end{align*}
and then prove the uniform boundedness of the derivatives of $k$ in the following lemma:
\begin{lemma}\label{lem:pslk}
Let $m \in \mathbb{N} \cup \{ 0 \}$ and $(\theta, \theta_-, \theta_+) \in \mathcal{D}$. Then there exists $K^* (m, \theta, \theta_\pm) > 0$ such that the following holds$:$ Assume $\gamma : (-1,1) \times [0, T_{max}) \to \mathbb{R}^2$ is the smooth solution to $\eqref{MP}$ with the initial curve $\gamma_0$ satisfying \eqref{ACF:BC}, \eqref{A1}, \eqref{A2} and \eqref{A3}. Then 
\begin{align*}
 \|\partial_s^l k\|_2 < C \quad \text{for all} \quad l \in \mathbb{N} \cup \{ 0 \}, \quad t \in [0, T_{max}),
\end{align*}
with a constant $C (m, \theta, \theta_\pm, \gamma_0, l) > 0$.
\end{lemma}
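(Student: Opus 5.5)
The plan is to combine the uniform bound on $\Phi[\gamma]$ from Lemmas~\ref{lem:mathcalE0bdd} and~\ref{lem:mathcalE1bdd} with the two-sided length bounds. Lemma~\ref{lem:LleqE} and the monotonicity of $E[\gamma]$ from Lemma~\ref{lem:dtEdtA} give $L[\gamma]\le E[\gamma_0]/(1-\mu)$. From \eqref{eq:Llowerbdd} we get $L[\gamma]\ge\sqrt{2\theta A[\gamma_0]}$ and $E[\gamma]\ge(1-\mu)L[\gamma]\ge(1-\mu)\sqrt{2\theta A[\gamma_0]}$. With $E[\gamma]$ bounded below, $\Phi[\gamma]<K^*$ yields uniform bounds on $\|k-\bar k\|_2$ (for $m=0$) or $\|\partial_s k\|_2$ (for $m\ge1$). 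Since $\bar k=\omega/L[\gamma]$ is uniformly bounded and $\int_\gamma(k-\bar k)\,ds=0$, Lemma~\ref{lem:Poincare1} bounds $\|k\|_2$ in the case $m\ge1$. This settles the low orders $l=0$ and $l=1$ (for $m\ge1$).

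For higher derivatives I will argue by induction on $l$ with an energy estimate. Set $\bar K:=k-\bar k$. Using Lemma~\ref{lem:kslt}, the commutator formula \eqref{eq:commutator1} and $\frac{d}{dt}\bar k=-\bar k\,\frac{d}{dt}L/L$, I will compute $\frac{d}{dt}\int_\gamma(\partial_s^l\bar K)^2\,ds$.

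\emph{Leading term.} After integrating by parts $m+1$ times, the leading term gives $-\|\partial_s^{l+m+1}k\|_2^2$ plus boundary terms. These boundary terms vanish: every boundary contribution is a product containing an odd-order derivative of $k$ at $u=\pm1$, and these are zero by Lemma~\ref{lem:ks2l+1}. To make sure one factor is always of odd order, I will treat even and odd $l$ separately if necessary, choosing where to split the derivatives.

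\emph{Lower-order terms.} The polynomial terms $P_3^{2m+l,j}$, $P_2^{2m+l,j}$ and $\bar k^2\partial_s^lF_m[k]$ are handled by integrating by parts so that at most $l+m+1$ derivatives fall on any factor. Then H\"older's inequality and Lemma~\ref{lem:k-interpolation}, applied to $f=\bar K$ (admissible by Lemma~\ref{lem:ks2l+1}), bound each by $C\|\bar K\|_{W^{1,2}}^{a}\|\partial_s^{l+m+1}\bar K\|_2^{b}$ with $b<2$. Young's inequality then absorbs them into the dissipation.

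\emph{Tangential terms.} The terms coming from $G_m[k]$ appear as $\partial_s^{l+1}k\,G_m[k]$ in the equation and as $(\partial_s^l\bar K)^2\partial_sG_m[k]$ from the measure. I would integrate by parts to rewrite $2\int\partial_s^lK\,\partial_s^{l+1}K\,G=\int\partial_s(\partial_s^lK)^2G$ as a boundary term $[(\partial_s^lK)^2G]$ minus $\int(\partial_s^lK)^2\partial_sG$. The interior piece then cancels against the measure term. This cancellation is precisely why the tangential velocity causes no loss of derivatives.

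The main obstacle is the leftover boundary term $[(\partial_s^l\bar K)^2G_m[k]]_{-1}^{1}$, where $G_m[k](\pm1)=-F_m[k](\pm1)\cot\theta_\pm$. When $l$ is odd it vanishes by Lemma~\ref{lem:ks2l+1}. When $l$ is even it does not, and I will estimate it by $\|\partial_s^l\bar K\|_\infty^2\,\|\partial_s^{2m}\bar K\|_\infty$. Using Gagliardo--Nirenberg via Lemma~\ref{lem:k-interpolation} with $p=\infty$, this is a lower-order product and hence absorbable. The same treatment applies to the remaining nonlocal pieces of $G_m$ containing $\frac{d}{dt}L$ and $\int kF_m$, which are bounded by interpolation.

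The outcome is a differential inequality
\[
\frac{d}{dt}\|\partial_s^l\bar K\|_2^2+\tfrac12\|\partial_s^{l+m+1}\bar K\|_2^2\le C.
\]
Combined with $\|\partial_s^l\bar K\|_2\le C\|\partial_s^{l+m+1}\bar K\|_2$ from Lemmas~\ref{lem:Poincare1} and~\ref{lem:Poincare2} (with $L$ bounded above), this gives $y'\le -cy+C$, and hence a uniform bound for all $t<T_{max}$. Finally, $\|\partial_s^l k\|_2=\|\partial_s^l\bar K\|_2$ for $l\ge1$.
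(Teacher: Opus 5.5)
Your scheme matches the paper's: an energy identity for $\|\partial_s^l k\|_2^2$, the $G_m$ terms collapsing to $[(\partial_s^lk)^2G_m[k]]_{-1}^{1}$, parity arguments for boundary terms, interpolation and Young, then Poincar\'e to reach $y'\le -cy+C$. For the leading term you need no case distinction: each boundary product $\partial_s^{l+i}k\,\partial_s^{l+2m+1-i}k$ has odd total order $2l+2m+1$, so one factor always has odd order.

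\textbf{The gap is your treatment of even $l$.} Since $G_m[k](\pm1)=-F_m[k](\pm1)\cot\theta_\pm$, the leftover term is, up to the factors $\cot\theta_\pm$, $(\partial_s^lk)^2\,\partial_s^{2m}\bar K$ evaluated at $u=\pm1$. To control $\|\partial_s^{2m}\bar K\|_\infty$ by interpolating between $\|\bar K\|_2$ and the dissipation $\|\partial_s^{l+m+1}\bar K\|_2$, you need $2m<l+m+1$, i.e.\ $l\ge m$. In that range the exponent count $2-\frac{1}{2(l+m+1)}<2$ does work. For even $l$ with $2\le l\le m-1$, which occurs as soon as $m\ge3$, it fails. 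For example, with $m=3$, $l=2$ the boundary term is $(\partial_s^2k)^2\partial_s^6k$ at the end points, while the dissipation is only $\|\partial_s^6k\|_2^2$. A point value of $\partial_s^6k$ would need $\partial_s^7k$, and Lemma~\ref{lem:k-interpolation} requires $j$ strictly below the top order. So the claim that this boundary term is a lower-order, absorbable product is false in that range, and your argument does not close there.

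\textbf{The repair is the paper's device.} Run the energy estimate only for odd $l=2n-1$. There $\partial_s^{l}k(\pm1)=0$ by Lemma~\ref{lem:ks2l+1}, so the $G_m$ boundary term vanishes identically. Then obtain even $l\ge2$ afterwards from $\|\partial_s^lk\|_2\le\frac{L[\gamma]}{\pi}\|\partial_s^{l+1}k\|_2$. This holds because $\int_\gamma\partial_s^lk\,ds=[\partial_s^{l-1}k]_{-1}^{1}=0$; alternatively use $\|\partial_s^lk\|_2^2\le\|\partial_s^{l-1}k\|_2\|\partial_s^{l+1}k\|_2$. With this change the rest of your plan goes through.

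Two smaller points. First, when you integrate the polynomial terms by parts you should say why the boundary terms vanish. The integrand $\partial_s^l\bar K\cdot P^{2m+l}$ has even total order, so every boundary product has odd total order, and each of its monomials contains an odd-order derivative of $\bar K$. Second, after the interior cancellation no nonlocal pieces of $G_m$ remain to estimate (those involving $\frac{d}{dt}L[\gamma]$ or $\int_\gamma kF_m[k]\,ds$), since $G_m$ enters only through its end-point values. Absorbing $2\bar k^2\|\partial_s^{l+m}\bar K\|_2^2$ by interpolation, rather than by the paper's Poincar\'e bound with the factor $\omega^2/\pi^2<1$, is fine.
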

\begin{proof}
When $l = 0$, it follows from \eqref{eq:Llowerbdd} and Lemmas \ref{lem:Poincare1}, \ref{lem:rot-invariant}, \ref{lem:LleqE}, \ref{lem:mathcalE0bdd}, and \ref{lem:mathcalE1bdd} that there exists a constant $C(m, \theta, \theta_\pm, \gamma_0) > 0$ such that
\begin{align*}
  \| k \|_2^2 
    &= \| k - \bar{k} \|_2^2 + \frac{\omega^2}{L[\gamma]}  
    \leq C (\Phi [\gamma] + 1)
    \leq C,
\end{align*}
for all $t \in [0, T_{max})$. When $l \geq 1$, it suffices to consider the case $l = 2n - 1$ with $n \in \mathbb{N}$. Once the estimate is proved for $l = 2n - 1$, the case $l = 2n$ follows from Lemma \ref{lem:Poincare1}. Here Lemmas \ref{lem:kslt} and \ref{lem:ks2l+1} imply that
\begin{align*}
  &\quad \frac{d}{dt} \|\partial_s^{2n - 1} k \|_2^2\\
  &= 2 \int_\gamma \partial_t \partial_s^{2n - 1} k \partial_s^{2n - 1} k \, ds
       + \int_\gamma (\partial_s^{2n - 1} k)^2 (- k F_m[k] + \partial_s G_m [k]) \, ds\\
  &= (-1)^m 2 \int_\gamma \partial_s^{2m + 2n + 1} k \partial_s^{2n - 1} k \, ds\\
  &\quad+ 2 \sum_{j = \max \{2m, 2n-1 \}}^{2m + 2n -1} \left( \int_\gamma \partial_s^{2n -1} k P_3^{2m + 2n -1, j} \, ds 
  + \int_\gamma \partial_s^{2n -1} k P_2^{2m + 2n -1, j} \, ds \right)\\
  &\quad + 2 \bar{k}^2 \int_\gamma \partial_s^{2n -1} \partial_s^{2n -1} F_m[k] \, ds\\
  &= -2 \| \partial_s^{m + 2n} k \|_2^2
   + \int_\gamma P_4^{2m + 4n - 2, m + 2n - 1} \, ds + \int_\gamma P_3^{2m + 4n - 2, m + 2n - 1}\, ds\\
   &\quad + 2 \bar{k}^2 \| \partial_s^{m + 2n - 1} (k - \bar{k}) \|_2^2,
\end{align*}
where we integrated by parts in the last equality. Fix $\varepsilon > 0$ arbitrarily. For $p_1, p_2, p_3, p_4$ with $\sum_{j = 1}^4 p_j = 2m + 4n - 2$ and $0 \leq p_j \leq m + 2n - 1$, we observe from Lemmas \ref{lem:k-interpolation}, \ref{lem:mathcalE0bdd} and \ref{lem:mathcalE1bdd} that
\begin{align*}
\left| \int_\gamma \prod_{j = 1}^4 \partial_s^{p_j} (k - \bar{k}) \, ds \right| 
&\leq \left( \prod_{j= 1}^3 \| \partial_s^{p_j} (k - \bar{k} ) \|_{2^j} \right) \| \partial_s^{p_4} (k - \bar{k}) \|_8\\
&\leq \prod_{j = 1}^4 \| k - \bar{k} \|_2^{1 - \alpha_j} \| \partial_s^{n + 2m} k \|_2^{\alpha_j}\\
&\leq  C \| k - \bar{k} \|_2^{2 + \frac{1}{m + 2n}} \| \partial_s^{m + 2n} k \|_2^{2 - \frac{1}{m + 2n}}\\
&\leq \varepsilon \| \partial_s^{m + 2n} k \|_2^2 + C_\varepsilon \Phi[\gamma]^{4m + 8n + 2} \leq \varepsilon \| \partial_s^{m + 2n} k \|_2^2 + C_\varepsilon
\end{align*}
with a constant $C_\varepsilon (m, \theta, \theta_\pm, \gamma_0, \varepsilon) > 0$ where we used Young's inequality on the last estimate, and
\begin{align*}
  \alpha_j = \frac{1}{m + 2n} \left( p_j + \frac{1}{4} \right), \quad \sum_{j = 1}^4 \alpha_j = 2 - \frac{1}{m + 2n}.
\end{align*}
Similarly, for $q_1, q_2, q_3$ with $\sum_{j = 1}^3 q_j = 2m + 4n -2$ and $0 \leq q_j \leq m + 2n - 1$, we obtain
\begin{align*}
  \left| \int_\gamma \prod_{j = 1}^3 \partial_s^{q_j} (k - \bar{k}) \, ds \right| 
  &\leq \left( \prod_{j = 1}^2 \| \partial_s^{q_j} (k - \bar{k}) \|_{2^j} \right) \| \partial_s^{q_3} (k - \bar{k}) \|_4\\
  &\leq \prod_{j = 1}^3 \| k - \bar{k} \|_2^{1 -\beta_j} \| \partial_s^{m + 2n} k \|_2^{\beta_j}\\
  &\leq C \| k - \bar{k} \|_2^{1 + \frac{3}{2(m + 2n)}} \| \partial_s^{m + 2n} k \|_2^{2 - \frac{3}{2 (m + 2n)}}\\
  &\leq \varepsilon \| \partial_s^{m + 2n} k \|_2^2 + C_\varepsilon \Phi[\gamma]^{2m + 4n +3}\\
   &\leq \varepsilon \| \partial_s^{m + 2n} k \|_2^2 + C_\varepsilon
\end{align*}
with a constant $C_\varepsilon (m, \theta, \theta_\pm, \gamma_0, \varepsilon) > 0$ where
\begin{align*}
\beta_1 &= \frac{1}{m + 2n} q_1, \quad \beta_j = \frac{1}{m + 2n} \left( q_j + \frac{1}{4} \right) \quad \text{for} \quad j = 2, 3,\\\quad \sum_{j = 1}^3 \beta_j &= 2 - \frac{3}{2 (m + 2n)}.
\end{align*}
Moreover, Lemmas \ref{lem:Poincare1} and \ref{lem:Poincare2} imply that
\begin{align*}
  2 \bar{k}^2 \| \partial_s^{m + 2n -1} (k - \bar{k}) \|_2^2 \leq 2 \frac{\omega^2}{\pi^2} \| \partial_s^{m + 2n} k \|_2^2.
\end{align*}
Taking $\varepsilon$ small enough, we have
\begin{align*}
  \frac{d}{dt} \| \partial_s^{2n - 1} k \|_2^2
  \leq - \left( 1 - \frac{\omega^2}{\pi^2} \right) \| \partial_s^{2n +m} k \|_2^2 + C_1,
\end{align*}
with a constant $C_1 (m, \theta, \theta_\pm, \gamma_0) > 0$.
This together with Lemmas \ref{lem:Poincare1}, \ref{lem:Poincare2}, \ref{lem:rot-invariant}, \ref{lem:dtEdtA}, and \ref{lem:LleqE} implies that
\begin{align*}
  \frac{d}{dt} \| \partial_s^{2n - 1} k \|_2^2 \leq - C_2 \| \partial_s^{2n - 1} k \|_2^2 + C_1,
\end{align*}
with constants $C_i (m, \theta, \theta_\pm, \gamma_0, n) > 0$ $(i = 1, 2)$. This clearly implies that
\begin{align*}
  \| \partial_s^{2n - 1} k (t) \|_2^2 
    &\leq \| \partial_s^{2n - 1} k (0) \|_2^2 + \frac{C_1}{C_2}
    \leq C \quad \text{for all} \quad t \in [0, T_{max}),
\end{align*}
with a constant $C (m, \theta, \theta_\pm, \gamma_0, n) > 0$. This completes the proof.
\end{proof}
Using \eqref{eq:Llowerbdd}, Lemmas \ref{lem:dtEdtA}, \ref{lem:LleqE} and \ref{lem:pslk}, we derive the following lemma:

\begin{lemma}\label{lem:pulGmbdd}
Let $m \in \mathbb{N} \cup \{ 0 \}$ and $(\theta, \theta_-, \theta_+) \in \mathcal{D}$. Then there exists $K^* (m, \theta, \theta_\pm) > 0$ such that the following holds$:$ Assume $\gamma : (-1,1) \times [0, T_{max}) \to \mathbb{R}^2$ is the smooth solution to $\eqref{MP}$ with the initial curve $\gamma_0$ satisfying \eqref{ACF:BC}, \eqref{A1}, \eqref{A2} and \eqref{A3}. Then there exist constants $0 < C_1 < C_2$ such that
  \begin{align} \label{eq:pulGmbdd1}
    C_1 \leq  |\partial_u \gamma (u, t)| \leq C_2 \quad \text{for all} \quad (u, t) \in (-1, 1) \times [0, T_{max}).
  \end{align}
Furthermore, for every $l \in \mathbb{N}$, there exists a constant $C_3 > 0$ such that
  \begin{align}\label{eq:pulGmbdd2}
    \|\partial_u^l \gamma \|_\infty \leq C_3 \quad \text{for all} \quad t \in [0, T_{max}).
  \end{align}
\end{lemma}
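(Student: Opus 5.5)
The bound \eqref{eq:pulGmbdd1} comes from the constant-speed parametrization. By Lemma~\ref{lem:u}, $|\partial_u\gamma|=L[\gamma]/2$ on $(-1,1)\times[0,T_{max})$, so it suffices to bound $L[\gamma]$ above and below uniformly in time. For the lower bound, $A[\gamma]=A[\gamma_0]>0$ is preserved by Lemma~\ref{lem:dtEdtA}, and \eqref{eq:Llowerbdd} gives $L[\gamma]\ge\sqrt{2\theta A[\gamma_0]}$. For the upper bound, $E[\gamma]$ is nonincreasing by Lemma~\ref{lem:dtEdtA}, and Lemma~\ref{lem:LleqE} gives $L[\gamma]\le E[\gamma]/(1-\mu)\le E[\gamma_0]/(1-\mu)$. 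We may therefore take $C_1=\tfrac12\sqrt{2\theta A[\gamma_0]}$ and $C_2=E[\gamma_0]/(2(1-\mu))$.

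For \eqref{eq:pulGmbdd2}, the case $l=1$ is \eqref{eq:pulGmbdd1}. For $l\ge2$, the constant speed gives $\partial_u=\tfrac{L[\gamma]}{2}\partial_s$ with $L[\gamma]$ independent of $u$, so
\[
\partial_u^l\gamma=\Bigl(\tfrac{L[\gamma]}{2}\Bigr)^{l}\partial_s^{l}\gamma=\Bigl(\tfrac{L[\gamma]}{2}\Bigr)^{l}\partial_s^{l-1}\btau .
\]
By the Frenet--Serret formulas \eqref{FS} and induction, $\partial_s^{l-1}\btau$ is a combination of $\btau$ and $\bnu$ whose coefficients are polynomials in $k,\partial_sk,\dots,\partial_s^{l-2}k$ with universal coefficients. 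It is therefore enough to bound $\|\partial_s^jk\|_\infty$ uniformly for every $j$.

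To get these $L^\infty$ bounds, write $\partial_s^jk=(\partial_s^jk-\overline{\partial_s^jk})+\overline{\partial_s^jk}$. The average is controlled by $|\overline{\partial_s^jk}|\le L^{-1/2}\|\partial_s^jk\|_2$. The mean-zero part is controlled by Lemma~\ref{lem:Poincare1}: $\|\partial_s^jk-\overline{\partial_s^jk}\|_\infty^2\le\frac{2L}{\pi}\|\partial_s^{j+1}k\|_2^2$. Combining this with Lemma~\ref{lem:pslk} (applied with $l=j$ and $l=j+1$) and the two-sided bound on $L[\gamma]$ gives $\|\partial_s^jk\|_\infty\le C$ uniformly on $[0,T_{max})$. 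Multiplying by $(L/2)^l\le C_2^l$ then yields $C_3$.

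The main obstacle is only bookkeeping: checking that all constants depend solely on $m,\theta,\theta_\pm,\gamma_0,l$ and not on $T_{max}$. This holds because each ingredient — the lower bound \eqref{eq:Llowerbdd}, the monotonicity of $E[\gamma]$, and Lemma~\ref{lem:pslk} — is already time-uniform.
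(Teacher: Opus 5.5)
Your proposal is correct and follows essentially the same route as the paper: constant speed from Lemma~\ref{lem:u}, the two-sided length bound from \eqref{eq:Llowerbdd} and Lemma~\ref{lem:LleqE} together with the monotonicity of $E[\gamma]$, and then $\partial_u^l=(L[\gamma]/2)^l\partial_s^l$, the Frenet--Serret formulas, and $L^\infty$ bounds on $\partial_s^j k$ obtained from Lemma~\ref{lem:pslk} and Poincar\'e-type inequalities. The only minor difference is that you control the mean of $\partial_s^j k$ by Cauchy--Schwarz, which avoids the endpoint conditions of Lemma~\ref{lem:ks2l+1} that the paper needs in order to apply Lemma~\ref{lem:Poincare2}.
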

\begin{proof}
From \eqref{A2}, \eqref{eq:Llowerbdd} and Lemma \ref{lem:LleqE}, we have
  \begin{align*}
    \frac{\sqrt{2 \theta A[\gamma_0]}}{2} \leq |\partial_u \gamma| \leq \frac{E[\gamma_0]}{2(1 - \mu)} \quad \text{for all} \quad (u, t) \in (-1, 1) \times [0, T_{max}).
  \end{align*}
Thus \eqref{eq:pulGmbdd1} holds. We prove \eqref{eq:pulGmbdd2}. From \eqref{A2} and \eqref{eq:Llowerbdd}, we have
  \begin{align}\label{eq:pulGmbdd3}
    \partial_u^l \xi = \left( \frac{L[\gamma]}{2} \right)^l \partial_s^l \xi
  \end{align}
for any smooth vector field $\xi$. When $l = 1$, the assertion follows from \eqref{eq:pulGmbdd1}. When $l \geq 2$, we proceed by induction. From \eqref{FS}, \eqref{eq:Llowerbdd}, \eqref{eq:pulGmbdd3}, Lemmas \ref{lem:dtEdtA} and \ref{lem:LleqE}, we have
  \begin{align*}
    |\partial_u^2 \gamma|
      \leq \left( \frac{L[\gamma]}{2} \right)^2 | k {\bm \nu} |
      \leq C \quad \text{for all} \quad (u, t) \in (-1, 1) \times [0, T_{max})
  \end{align*}
with some constant $C(m, \theta, \theta_\pm, \gamma_0) > 0$ where we used the fact that
  \begin{align}\label{eq:kinfty}
    \| k \|_\infty \leq \| k - \bar{k} \|_\infty + |\bar{k}| \leq \left( \frac{2L[\gamma]}{\pi} \right)^{\frac{1}{2}} \| \partial_s k \|_2 + \frac{|\omega|}{L[\gamma]} \leq C.
  \end{align}
Thus, \eqref{eq:pulGmbdd2} holds for $l = 2$.
Assume that \eqref{eq:pulGmbdd2} holds for $l \in \mathbb{N}$ with $2 \leq l \leq n$. From \eqref{FS}, \eqref{eq:pulGmbdd3}, \eqref{eq:kinfty}, Lemmas \ref{lem:Poincare1}, \ref{lem:Poincare2}, \ref{lem:dtEdtA}, \ref{lem:pslk} and \ref{lem:LleqE}, we have
  \begin{align*}
   | \partial_u^{n + 1} \gamma | = \left( \frac{L[\gamma]}{2} \right)^{n + 1} | \partial_s^{n - 1}( k \bnu) | \leq C \quad \text{for all} \quad (u, t) \in (-1, 1) \times [0, T_{max})
  \end{align*}
with some constant $C(m, \theta, \theta_\pm, \gamma_0, n) > 0$. Thus \eqref{eq:pulGmbdd2} holds. This completes the proof.
\end{proof}

We prove global-in-time solvability of \eqref{MP}. 
\begin{theorem}\label{thm:global-in-timesolofMP}
Let $m \in \mathbb{N} \cup \{ 0 \}$ and $(\theta, \theta_-, \theta_+) \in \mathcal{D}$. Then there exists $K^* (m, \theta, \theta_\pm) > 0$ such that the following holds$:$ 
Assume $\gamma_0$ is a smooth regular curve satisfying \eqref{ACF:BC}, \eqref{A1}, \eqref{A2} and \eqref{A3}. Then problem~\eqref{MP} possesses a smooth global-in-time solution $\gamma : (-1, 1) \times [0, \infty) \to \mathbb{R}^2$.
\end{theorem}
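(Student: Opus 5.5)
We argue by contradiction with a standard continuation argument. By Theorem~\ref{thm:local-in-timesolofMP}, \eqref{MP} has a unique smooth solution on a maximal interval $[0,T_{max})$. Suppose $T_{max}<\infty$; we show that $\gamma(\cdot,t)$ converges smoothly as $t\uparrow T_{max}$ to an admissible curve from which the flow can be restarted. Throughout, $K^*$ is the constant of Lemmas~\ref{lem:mathcalE0bdd} and~\ref{lem:mathcalE1bdd} (the smaller of the two, or the relevant one according to $m$). Then \eqref{A3} gives $\Phi[\gamma](t)<K^*$ on $[0,T_{max})$, so Lemmas~\ref{lem:pslk} and~\ref{lem:pulGmbdd} apply. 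They give $0<C_1\le|\partial_u\gamma|\le C_2$ and $\|\partial_u^l\gamma\|_\infty\le C_3(l)$ for every $l$, uniformly in $t\in[0,T_{max})$.

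Next we bound time derivatives. The velocity $\partial_t\gamma=F_m[k]\bnu+G_m[k]\btau$ is a polynomial in $\partial_s^jk$ for $j\le 2m$, together with $\bar k$, $L[\gamma]^{\pm1}$, $\cot\theta_\pm$ and the integral terms in \eqref{def:Gmk}. By \eqref{eq:Llowerbdd}, Lemma~\ref{lem:LleqE}, Lemma~\ref{lem:pslk} and Sobolev embedding (as in \eqref{eq:kinfty}), all of these are bounded in $L^\infty$. Hence $\|\partial_t\partial_u^l\gamma\|_\infty\le C(l)$ for all $l$, and likewise for higher mixed derivatives $\partial_t^i\partial_u^l\gamma$, obtained by differentiating the equation and using Lemmas~\ref{lem:commutator} and~\ref{lem:kslt}. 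Therefore $\gamma(\cdot,t)$ is Cauchy in every $C^l([-1,1])$ as $t\uparrow T_{max}$ and converges to a smooth curve $\gamma_{T}$.

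The limit is admissible. The bound $|\partial_u\gamma_T|\ge C_1$ makes it regular. The identity $|\partial_u\gamma_T|=L[\gamma_T]/2$ (from Lemma~\ref{lem:u}) gives \eqref{A1}. The boundary conditions \eqref{ACF:BC} pass to the limit by $C^\infty$ convergence: the odd derivatives of $k$ vanish at $\pm1$ by Lemma~\ref{lem:ks2l+1}, and the endpoints remain on $\eta_0$ and $\eta_\theta$. The area and rotation number in \eqref{A2} are preserved by Lemmas~\ref{lem:dtEdtA} and~\ref{lem:rot-invariant}. Finally, $\Phi[\gamma_T]<K^*$: Lemmas~\ref{lem:mathcalE0bdd} and~\ref{lem:mathcalE1bdd} give $\Phi\le K^*$ in the limit, and strict inequality follows because $\Phi$ is nonincreasing by \eqref{eq:step1}/\eqref{eq:step2}, so $\Phi[\gamma_T]\le\Phi[\gamma_0]<K^*$.

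Restarting at $\gamma_T$ with Theorem~\ref{thm:local-in-timesolofMP} gives a solution on $[T_{max},T_{max}+\varepsilon)$. By uniqueness and smoothness of the compatibility conditions, which hold to all orders, it glues to a smooth solution beyond $T_{max}$. This contradicts maximality, so $T_{max}=\infty$.

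The main obstacle is making the gluing genuinely smooth at $t=T_{max}$ and verifying that the restart is legitimate, since the local result is stated for $C^{2m+3}$ data. The uniform $C^l$ bounds together with the higher-order compatibility, namely that all odd $\partial_s$-derivatives of $k$ vanish at the endpoints, should handle this. A less delicate alternative is to use uniform-in-time local existence. The local existence time depends only on $C^{2m+3}$ norms of the data and the bound $\inf|\partial_u\gamma|\ge C_1$, both controlled uniformly. Restarting from $\gamma(\cdot,T_{max}-\delta)$ with $\delta$ smaller than this uniform time then extends the solution past $T_{max}$, with uniqueness giving consistency. This avoids passing to the limit, and I would present it as the main route.
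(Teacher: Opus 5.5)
Your proposal is correct and follows essentially the paper's argument. You assume $T_{max}<\infty$, use the uniform $C^\infty$ bounds of Lemmas~\ref{lem:pslk} and~\ref{lem:pulGmbdd} with a time-integrated bound on the velocity $F_m[k]\bnu+G_m[k]\btau$ to control $\gamma$ itself on the finite interval, and conclude by continuation. The only difference is packaging: the paper directly invokes the (standard but unproved) blow-up criterion $\limsup_{t\nearrow T_{max}}\|\gamma\|_{H^{2m+2}_{du}}=\infty$, while you make the same local-theory input explicit, either by restarting from the smooth admissible limit $\gamma_T$ or by a uniform local existence time.
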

\begin{proof}
We prove it by contradiction. Assume $T_{max} < \infty$. Then, thanks to Lemmas \ref{lem:Lbdd}, \ref{lem:dtEdtA}, \ref{lem:LleqE}, and \ref{lem:pulGmbdd}, we see that
  \begin{align*}
    \limsup_{t \nearrow T_{max}} \| \gamma \|_{H_{du}^{2m + 2}} = \infty.
  \end{align*}
Then, there exists a monotonically increasing sequence $\{ t_j \}_j$ with $t_j \nearrow T_{max}$ as $j \to \infty$ such that
  \begin{align*}
    \| \gamma (t_j) \|_{H_{du}^{2m + 2}} \to \infty \quad \text{as} \quad j \to \infty.
  \end{align*}
From \eqref{eq:kinfty} and Lemmas \ref{lem:Poincare1}, \ref{lem:dtEdtA}, \ref{lem:LleqE} and \ref{lem:pslk}, we have
  \begin{align*}
    &\quad |\gamma (u, t) - \gamma_0(u)|\\
    &\leq \int_0^t |\partial_{\tilde{t}} \gamma| \, d\tilde{t}
    \leq C \left( \int_0^t | F_m[k] | \, d \tilde{t} + \int_0^t |G_m[k]| \, d\tilde{t} \right)\\
    &\leq T_{max} \left(1
       + 2 E[\gamma_0] \| k \|_\infty  + (|\cot \theta_+| + 2|\cot \theta_-|) \right) \| F_m[k] \|_\infty\\
    &\leq C T_{max} \| \partial_s^{2m + 1} k \|_2 \leq C.
  \end{align*}
This together with Lemma \ref{lem:pulGmbdd} leads to a contradiction. This completes the proof.
\end{proof}
In what follows, we prove the full limit convergence of the solution to \eqref{MP}. For this purpose, we establish the exponential decay of $\Phi[\gamma]$ and of the derivatives of $k$.
\begin{lemma}\label{lem:Phiexpdecay}
Let $m \in \mathbb{N} \cup \{ 0 \}$ and $(\theta, \theta_-, \theta_+) \in \mathcal{D}$. Then there exists $K^* (m, \theta, \theta_\pm) > 0$ such that the following holds$:$ Assume $\gamma : (-1,1) \times [0, \infty) \to \mathbb{R}^2$ is the smooth solution to $\eqref{MP}$ with the initial curve $\gamma_0$ satisfying \eqref{ACF:BC}, \eqref{A1}, \eqref{A2} and \eqref{A3}. Then there exists a constant $\delta (m, \theta, \theta_\pm, \gamma_0) > 0$ such that
  \begin{align}\label{eq:Phiexpdecay1}
    \Phi [\gamma] < K^* e^{- \delta t} \quad \text{for all} \quad t \in [0, \infty).
  \end{align}
Moreover, for each $l \in \N$, we find $C(m, \theta, \theta_\pm, \gamma_0, l) > 0$ such that
  \begin{align}\label{eq:Phiexpdecay2}
    \| \partial_s^l k \|_2 \leq C e^{- \delta t} \quad \text{for all} \quad t \in [0, \infty).
  \end{align}
\end{lemma}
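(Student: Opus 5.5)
We start from the differential inequalities \eqref{eq:step1} and \eqref{eq:step2} from the proofs of Lemmas~\ref{lem:mathcalE0bdd} and \ref{lem:mathcalE1bdd}. Since Lemmas~\ref{lem:mathcalE0bdd} and \ref{lem:mathcalE1bdd} give $\Phi[\gamma]<K^*$ on $[0,\infty)$ (global existence by Theorem~\ref{thm:global-in-timesolofMP}), these inequalities now hold on the whole half-line with the fixed constant $\delta_m=g_m(K^*)>0$. In the case $m=0$ this reads
\[
\frac{d}{dt}\mathcal{E}_0[\gamma]\le -\delta_0 E[\gamma]\|\partial_s k\|_2^2 ,
\]
and in the case $m\ge1$ it reads
\[
\frac{d}{dt}\mathcal{E}_1[\gamma]\le -\delta_m E[\gamma]^3\|\partial_s^{m+2}k\|_2^2 .
\]
Our goal is to bound the right-hand side by $-c\,\Phi[\gamma]$, with $c>0$ independent of $t$.

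For $m=0$, Lemma~\ref{lem:Poincare1} applies to $k-\bar k$, which has zero mean, and gives $\|\partial_s k\|_2^2\ge \pi^2 L^{-2}\|k-\bar k\|_2^2$. Hence
\[
E\|\partial_s k\|_2^2\ge \pi^2 L^{-2}\,\mathcal{E}_0 .
\]
By Lemma~\ref{lem:LleqE} and the monotonicity in Lemma~\ref{lem:dtEdtA}, $L\le E[\gamma_0]/(1-\mu)$. Therefore $\frac{d}{dt}\mathcal{E}_0\le -\delta\,\mathcal{E}_0$ with $\delta=\delta_0\pi^2(1-\mu)^2/E[\gamma_0]^2$.

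For $m\ge1$, we use Lemma~\ref{lem:ks2l+1}: the odd derivatives of $k$ vanish at the ends, so the derivatives $\partial_s^{j}k$ alternately have zero mean (for $j$ even, $j\ge 2$, since $\partial_s^{j-1}k$ vanishes at the ends) or zero boundary values (for $j$ odd). Applying Lemma~\ref{lem:Poincare1} or Lemma~\ref{lem:Poincare2} step by step from $j=m+1$ down to $j=1$ yields $\|\partial_s^{m+2}k\|_2^2\ge (\pi/L)^{2(m+1)}\|\partial_s k\|_2^2$. Consequently
\[
E^3\|\partial_s^{m+2}k\|_2^2\ge \pi^{2m+2}L^{-2m-2}\,\mathcal{E}_1 ,
\]
and we again use $L\le E[\gamma_0]/(1-\mu)$, and also $L\ge\sqrt{2\theta A[\gamma_0]}$ from \eqref{eq:Llowerbdd} if needed. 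Grönwall then gives $\Phi[\gamma](t)\le \Phi[\gamma_0]e^{-\delta t}<K^*e^{-\delta t}$, which is \eqref{eq:Phiexpdecay1}.

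For \eqref{eq:Phiexpdecay2} we proceed as follows. First, $\|\partial_s k\|_2^2$ decays exponentially. This is immediate from $\Phi$ when $m\ge1$, since $E\ge (1-\mu)L\ge c>0$. For $m=0$, it follows from $\|k-\bar k\|_2$ decaying together with the higher-order bounds. Concretely, we use the interpolation of Lemma~\ref{lem:k-interpolation}, $\|\partial_s^l k\|_2\le M\|k-\bar k\|_2^{1-l/r}\|\partial_s^r k\|_2^{l/r}$ for $r>l$. By Lemma~\ref{lem:pslk}, $\|\partial_s^r k\|_2\le C$ uniformly in time, and $\|k-\bar k\|_2\le C\Phi^{1/2}$ decays exponentially in both cases: for $m\ge1$ via Lemma~\ref{lem:Poincare1} applied to $\partial_s k$. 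Hence every $\|\partial_s^l k\|_2$ decays like $e^{-\delta(1-l/r)t/2}$. Choosing $r=2l$ gives the rate $e^{-\delta t/4}$. We then rename $\delta$ (taking the smaller rate uniformly) and absorb constants. Since the statement allows $C$ to depend on $l$ and asks for a single $\delta$, we will take $r=2l$ for every $l$, so the exponent $\delta/4$ is uniform in $l$. We then replace $\delta$ by $\delta/4$ in both statements; \eqref{eq:Phiexpdecay1} remains true with the smaller rate.

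The main obstacle is the chain of Poincaré inequalities for $m\ge1$. We must verify that the boundary conditions of Lemma~\ref{lem:ks2l+1} supply the correct hypothesis at every order. Zero mean of $\partial_s^{2j}k$ for $j\ge1$ follows by integrating $\partial_s^{2j}k=\partial_s(\partial_s^{2j-1}k)$ against vanishing end values. Vanishing end values of $\partial_s^{2j+1}k$ is given directly by Lemma~\ref{lem:ks2l+1}. Both hypotheses therefore hold, so the chain goes through.
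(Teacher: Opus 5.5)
Your proposal is correct and follows the paper's proof: the same Poincar\'e chain (using the parity of the boundary conditions from Lemma~\ref{lem:ks2l+1}) turns \eqref{eq:step1} and \eqref{eq:step2} into $\frac{d}{dt}\Phi\le-\delta_m\bigl(\pi(1-\mu)/E[\gamma_0]\bigr)^{2m+2}\Phi$, which Gronwall's inequality integrates. The decay of $\|\partial_s^l k\|_2$ then comes from interpolating between the decaying $\|k-\bar k\|_2$ and the uniformly bounded $\|\partial_s^{2l}k\|_2$ of Lemma~\ref{lem:pslk}, with rate $\delta/4$ after relabeling. The only cosmetic difference is that the paper obtains $\|\partial_s^l k\|_2^2\le\|\partial_s^{2l}k\|_2\|k-\bar k\|_2$ directly, by integrating by parts $l$ times (the boundary terms vanish by Lemma~\ref{lem:ks2l+1}) and applying Cauchy--Schwarz; you invoke Lemma~\ref{lem:k-interpolation} with $r=2l$ instead, which gives the same exponent $1/2$ up to the constant $M$.
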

\begin{proof}
From Lemmas \ref{lem:mathcalE0bdd} and \ref{lem:mathcalE1bdd}, \eqref{eq:step1} and \eqref{eq:step2} hold for all $t \in [0, \infty)$. Thus, from Lemmas \ref{lem:Poincare1}, \ref{lem:Poincare2}, \ref{lem:dtEdtA} and \ref{lem:LleqE}, we have
  \begin{align*}
    \frac{d}{dt} \Phi [\gamma]
      \leq - \delta_m \left( \frac{\pi}{L[\gamma]} \right)^{2m + 2}\Phi [\gamma]
      \leq - \delta_m \left( \frac{\pi(1 - \mu)}{E[\gamma_0]} \right)^{2m + 2} \Phi [\gamma]
  \end{align*}
for all $t \in [0, \infty)$. From Gronwall's inequality, we have
  \begin{align*}
    \Phi [\gamma]
      \leq \Phi [\gamma_0] \exp \Big( - \delta_m \left( \frac{\pi(1 - \mu)}{E[\gamma_0]} \right)^{2m + 2} t \Big)
      < K^* e^{- \delta t}
  \end{align*}
for all $t \in [0, \infty)$ with some constant $\delta (m, \theta, \theta_\pm, \gamma_0) > 0$. Thus \eqref{eq:Phiexpdecay1} holds.
From \eqref{eq:Llowerbdd}, \eqref{eq:Phiexpdecay1}, Lemmas \ref{lem:Poincare1}, \ref{lem:ks2l+1}, \ref{lem:LleqE}, \ref{lem:pslk}, and H{\"o}lder's inequality, we have
  \begin{align*}
    \| \partial_s^l k \|_2^2
    &\leq \| \partial_s^{2l} k \|_2 \| k - \bar{k} \|_2
    \leq \frac{C}{\sqrt{E[\gamma]}} \sqrt{\Phi[\gamma]}
    \leq \frac{C\sqrt{K^*}}{\sqrt{(1 - \mu)L[\gamma]}} e^{-\frac{\delta}{2} t}\\
    &\leq \frac{C\sqrt{K^*}}{(2\theta (1 - \mu) A[\gamma_0])^{\frac{1}{4}}}e^{-\frac{\delta}{2} t},
  \end{align*}
with some constants $C(m, \theta, \theta_\pm, \gamma_0, l) > 0$ and $\delta (m, \theta, \theta_\pm, \gamma_0) > 0$. Taking square roots and relabeling the positive constants, we obtain \eqref{eq:Phiexpdecay2}.
\end{proof}
At the end of this section, we prove the convergence of \eqref{MP}.

\begin{theorem}\label{thm:convergenceofMP}
Let $m \in \N \cup \{ 0 \}$ and $(\theta, \theta_-, \theta_+) \in \mathcal{D}$. Then there exists $K^* (m, \theta, \theta_\pm) > 0$ such that the following holds$:$
Assume $\gamma_0$ is a smooth regular curve satisfying \eqref{ACF:BC}, \eqref{A1}, \eqref{A2} and \eqref{A3}. Then the unique global-in-time solution $\gamma (\cdot, t)$ to \eqref{MP} converges exponentially to the circular arc $\gamma_\infty (\cdot)$  having rotation number $\omega[\gamma_\infty] = \theta + \theta_+ - \theta_-$ and area $A[\gamma_\infty] = A[\gamma_0]$ as $t \to \infty$ in the $C^\infty$-topology.
\end{theorem}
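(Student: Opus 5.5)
The idea is to integrate the exponential decay of Lemma~\ref{lem:Phiexpdecay} in time to show that $\gamma(\cdot,t)$ is Cauchy in every $C^l$ norm. We then identify the limit through the conserved quantities (Lemmas~\ref{lem:rot-invariant} and \ref{lem:dtEdtA}) together with Lemma~\ref{lem:stationarysol} and Corollary~\ref{cor:k>0arc}. Since the solution of \eqref{MP} keeps the constant-speed parametrization of Lemma~\ref{lem:u}, no reparametrization is needed here.

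\textbf{Step 1: the velocity is integrable in time.} Lemma~\ref{lem:Phiexpdecay} gives $\|\partial_s^l k\|_2\le Ce^{-\delta t}$ for all $l\ge1$. Combined with Lemma~\ref{lem:Poincare1} (or \ref{lem:Poincare2}, using Lemma~\ref{lem:ks2l+1}) and the uniform bounds $c\le L[\gamma]\le C$ from \eqref{eq:Llowerbdd} and Lemma~\ref{lem:LleqE}, this yields $\|\partial_s^l F_m[k]\|_\infty\le Ce^{-\delta t}$ for every $l$. Next, the expression for $G_m[k]$ in \eqref{def:Gmk} is a sum of:
\begin{itemize}
\item an integral of $kF_m[k]$, where $\|k\|_\infty\le C$ by \eqref{eq:kinfty};
\item boundary values of $F_m[k]$ multiplied by $\cot\theta_\pm$;
\item the term $\frac{d}{dt}L[\gamma]$, which by \eqref{eq:dtLGamma} is controlled by the same quantities.
\end{itemize}
Hence $\|\partial_s^l G_m[k]\|_\infty\le Ce^{-\delta t}$ as well. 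Since $\partial_u=\frac{L[\gamma]}{2}\partial_s$ and $L$ is bounded, we obtain $\|\partial_u^l\partial_t\gamma\|_\infty\le C_le^{-\delta t}$. This uses Lemma~\ref{lem:pulGmbdd} to control the derivatives of $\bnu$ and $\btau$ that appear when differentiating $F\bnu+G\btau$.

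\textbf{Step 2: exponential convergence to a limit curve.} Integrating Step~1 in time, $\gamma(\cdot,t)$ converges in every $C^l([-1,1])$ to some smooth $\gamma_\infty$, with $\|\gamma(t)-\gamma_\infty\|_{C^l}\le C_le^{-\delta t}$. The limit inherits the following properties:
\begin{itemize}
\item $|\partial_u\gamma_\infty|=L_\infty/2$, where $L_\infty=\lim_{t\to\infty} L[\gamma(t)]\ge\sqrt{2\theta A[\gamma_0]}>0$ (this limit exists because $\frac{d}{dt}L$ is integrable), so $\gamma_\infty$ is regular;
\item the boundary conditions of \eqref{P};
\item $\omega[\gamma_\infty]=\theta+\theta_+-\theta_-$ by Lemma~\ref{lem:rot-invariant};
\item $A[\gamma_\infty]=A[\gamma_0]$ by Lemma~\ref{lem:dtEdtA}.
\end{itemize}
All of these pass to the limit by $C^1$ convergence.

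\textbf{Step 3: the limit is a circular arc.} Since $\|\partial_s k\|_2\to0$ and $k$ converges uniformly, the curvature $k_\infty$ of $\gamma_\infty$ is constant. A constant $k_\infty$ makes $F_m[k_\infty]\equiv0$, so $\gamma_\infty$ is a stationary solution. Corollary~\ref{cor:k>0arc}, applicable since $A[\gamma_\infty]>0$, then says $\gamma_\infty$ is a circular arc with positive curvature $k_\infty=\omega/L_\infty$.

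The main obstacle is Step~1, namely the uniform $C^l$ decay of the velocity, and in particular of the tangential part $G_m[k]$. That term contains the nonlocal pieces $\int_0^s kF_m\,d\tilde s$ and $\frac{d}{dt}L$, and differentiating it in $u$ produces products of $k$-derivatives with $F_m$ and its derivatives. Each such product contains at least one decaying factor, either $F_m$ or $\partial_s^j(k-\bar k)$, while the remaining factors are uniformly bounded by Lemma~\ref{lem:pslk}. So I would organize that computation by factoring out one decaying factor in every term.
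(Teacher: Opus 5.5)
Your proposal is correct, and it reaches the conclusion by a more direct route than the paper.

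The paper works in three stages:
\begin{enumerate}
\item It uses only the $L^2$ decay $\|\partial_t\gamma\|_2\le C\|\partial_s^{2m+1}k\|_2\le Ce^{-\delta t}$ to get $\int_0^\infty\|\partial_t\gamma\|_2\,dt<\infty$. It then picks times $t_j$ with $\|\partial_t\gamma(t_j)\|_2\to0$, and uses the uniform bounds of Lemma~\ref{lem:pulGmbdd} with Arzel\`a--Ascoli to extract a subsequential limit, which it declares an equilibrium.
\item It upgrades this to full convergence through the functional $\mathcal{G}[\gamma]=\int_{-1}^1|\gamma-\gamma_\infty|^2\,du$, following an argument cited from earlier work.
\item It identifies $\gamma_\infty$ via Corollary~\ref{cor:k>0arc} and again defers the exponential rate to a cited argument.
\end{enumerate}

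You instead prove pointwise exponential decay of every $\partial_u^l\partial_t\gamma$. Since $\partial_t$ and $\partial_u$ commute, $\gamma(\cdot,t)$ is Cauchy in each $C^l$. Full convergence and the rate $\|\gamma(t)-\gamma_\infty\|_{C^l}\le C_l\delta^{-1}e^{-\delta t}$ then follow at once, with $\delta$ independent of $l$. You need no subsequence extraction, no equilibrium-along-a-sequence argument, and no external reference. The identification of the limit (constant curvature, positive area, Corollary~\ref{cor:k>0arc}) is the same in both routes.

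The price is the bookkeeping for derivatives of $G_m[k]$, $\bnu$ and $\btau$, and it is routine:
\begin{itemize}
\item $G_m[k]$ and $\partial_sG_m[k]$ are bounded by $C(1+\|k\|_\infty)\|F_m[k]\|_\infty$.
\item For $l\ge2$ one has $\partial_s^lG_m[k]=\partial_s^{l-1}(kF_m[k])$.
\item $L^\infty$ bounds on all $\partial_s^jk$ follow from Lemma~\ref{lem:pslk}, Lemmas~\ref{lem:Poincare1} and~\ref{lem:Poincare2}, and the boundary conditions of Lemma~\ref{lem:ks2l+1}.
\end{itemize}
So every term carries a decaying factor and the rest is uniformly bounded.

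The paper's route needs only $L^2$ information on the velocity, but it leaves the full-convergence and rate steps to cited arguments. Your route is self-contained and makes the exponential rate explicit.
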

\begin{proof}
We divide the proof into three steps.

\noindent
{\bf Step 1.}{ \sl We prove subconvergence of the global-in-time solution to \eqref{MP}.} From Lemmas \ref{lem:Poincare1}, \ref{lem:Poincare2}, and \ref{lem:Phiexpdecay}, we have
  \begin{align}\label{eq:convergenceofMP1}
   \begin{aligned}
    \int_0^t \| \partial_t \gamma \|_2 \, d \tilde{t}
      &\leq \int_0^t ( \| F_m[k] \|_2 + \| G_m[k] \|_2 ) \, d\tilde{t}
      \leq C \int_0^t  \| \partial_s^{2m + 1} k \|_2 \, d\tilde{t}\\
      &\leq C \int_0^t e^{- \delta \tilde{t}} \, d\tilde{t}
      \leq \frac{C}{\delta} (1 - e^{- \delta t})
      \leq \frac{C}{\delta},
     \end{aligned}
  \end{align}
for all $t \in [0, \infty)$, where $\delta (m, \theta, \theta_\pm, \gamma_0), C(m, \theta, \theta_\pm, \gamma_0) > 0$. Thus we have
  \begin{align}\label{eq:convergenceofMP2}
    \int_0^\infty \| \partial_t \gamma \|_2 \, dt \leq \frac{C}{\delta}.
  \end{align}
Then there exists a monotonically increasing sequence $\{ t_j \}_j$ with $t_j \nearrow \infty$ such that
  \begin{align*}
    \| \partial_t \gamma (t_j) \|_2 \to 0 \quad \text{as} \quad j \to \infty.
  \end{align*}
By Lemma \ref{lem:pulGmbdd}, there exist constants $0 < C_1 < C_2$ and, for every $l \in \N$, a constant $C_3 > 0$ such that
  \begin{align*}
    C_1 \leq  |\partial_u \gamma (u, t)| \leq C_2, \quad  \| \partial_u^l \gamma \|_\infty \leq C_3, \quad \text{for all} \quad (u, t) \in (-1, 1) \times [0, \infty).
  \end{align*}
Furthermore, by the same argument as in \eqref{eq:convergenceofMP1}, we have
  \begin{align*}
    |\gamma (u, t) - \gamma_0 (u)| \leq \int_0^t | \partial_t \gamma (u, \tilde{t}) | \, d\tilde{t} < C \quad \text{for all} \quad (u, t) \in (-1, 1) \times [0, \infty),
  \end{align*}
where $C (m, \theta, \theta_\pm, \gamma_0) > 0$. This shows that $\{ \gamma (t_j) \}_j$ is uniformly bounded in $C^\infty ([-1, 1])$. Then, using the Arzel{\`a}--Ascoli theorem and the diagonal argument, we obtain a subsequence converging to a curve $\gamma_\infty$ which satisfies the boundary conditions of \eqref{MP} in the $C^\infty$-topology. Moreover, by the definition of $\{t_j\}_j$, $\gamma_\infty$ is an equilibrium of \eqref{MP}.

\noindent
{\bf Step 2.}{ \sl We prove the full limit convergence of the global-in-time solution to \eqref{MP}.}
We define
  \begin{align*}
    \mathcal{G} [\gamma] := \int_{-1}^1 |\gamma (u, t) - \gamma_\infty (u)|^2 \, du.
  \end{align*}
By the same argument as in \cite{AMWW_2020} (see also \cite[Theorem 5.8]{HO_2025}), from \eqref{eq:convergenceofMP2}, we have
  \begin{align}\label{eq:Gconvergence}
    \mathcal{G}[\gamma (t)] \to 0 \quad \text{as} \quad t \to \infty.
  \end{align}
This implies that
 \begin{align*}
   \gamma (\cdot, t) \to \gamma_\infty (\cdot) \quad \text{as} \quad t \to \infty.
 \end{align*}
Furthermore, by applying the same argument to $\partial_u^l \gamma(\cdot, t)$ for $l \in \N$, we obtain the full limit convergence in the $C^\infty$-topology.

\noindent
{\bf Step 3.}{ \sl We prove that $\gamma_\infty$ is a circular arc and give the rate of convergence.} By Lemma \ref{lem:Phiexpdecay} and the argument in Step 2, we see that $k (\cdot, t)$ converges to a constant as $t \to \infty$. Moreover, as shown in Step 1, $\gamma_\infty$ is an equilibrium of \eqref{MP}, and hence a stationary solution to \eqref{P}. The preservation of the signed area yields $A[\gamma_\infty] = A[\gamma_0] > 0$. Therefore, Corollary~\ref{cor:k>0arc} implies that $\gamma_\infty$ is a circular arc with positive curvature. From the same argument as in \cite[Theorem 5.8]{HO_2025}, the convergence rate follows from \eqref{eq:Gconvergence} and Lemma \ref{lem:Phiexpdecay}. This completes the proof.
\end{proof}
\begin{remark}
The condition $|\omega|<\pi$ in the definition of $\mathcal D$ allows the rotation number to have either sign. This causes no difficulty in the proof of Theorem \ref{thm:convergenceofMP}, since $\omega$ appears in the estimates only through $\omega^2$ or $|\omega|$. Nevertheless, for every initial curve satisfying all the assumptions of Theorem \ref{thm:convergenceofMP}, the positivity of the rotation number follows a posteriori. Indeed, let $k_\infty$ denote the constant curvature of the limiting circular arc $\gamma_\infty$. By the preservation of the rotation number and Corollary \ref{cor:k>0arc}, we have
\[
\omega[\gamma_0]
=
\omega[\gamma_\infty]
=
k_\infty L[\gamma_\infty]
>0.
\]
Thus, in the negative-rotation sector of $\mathcal D$, the positive-area and smallness assumptions of Theorem \ref{thm:convergenceofMP} cannot hold simultaneously.
\end{remark}
\section{Proof of Theorem \ref{shukekka}}\label{sec:pfofshukekka}
We are now in a position to prove Theorem \ref{shukekka}.
\begin{proof}[Proof of Theorem \ref{shukekka}]
By Theorems \ref{thm:local-in-timesolofMP}, \ref{thm:global-in-timesolofMP} and \ref{thm:convergenceofMP}, \eqref{MP} admits a unique smooth global-in-time solution that converges exponentially to a circular arc in the $C^\infty$-topology. Since every solution to \eqref{MP} is also a solution to \eqref{P}, this establishes the existence of a smooth global-in-time solution to \eqref{P}. It remains to prove that the solution is unique up to time-dependent reparametrizations. We use the constant-speed reparametrization defined by the inverse of the normalized arc-length parametrization, following Rupp--Spener \cite[Definition 4.9]{RS_2024}. We also refer to Mantegazza--Pluda--Pozzetta \cite[Proposition 2.16 and Theorem 3.18]{MPP_2021} for the general reparametrization argument underlying geometric uniqueness. For the reader's convenience, we provide the details in the present setting. Let $\gamma: (-1, 1) \times [0, T) \to \R^2$ be a smooth regular solution to \eqref{P} with initial curve $\gamma_0: [-1 ,1] \to \R^2$. Recall that \eqref{A1} gives
\[
|\partial_u \gamma_0| = \frac{L[\gamma_0]}{2} \quad \text{for all} \quad u \in [-1, 1].
\] 
We define the normalized arc-length reparametrization $\rho$ by
\begin{align}\label{eq:rho}
\rho (u, t) := -1 + \frac{2}{L[\gamma(t)]}\int_{-1}^u |\partial_\xi \gamma(\xi, t)|\, d\xi. 
\end{align}
Since $\gamma$ is regular, we have
\[
\partial_u \rho = \frac{2 |\partial_u \gamma|}{L[\gamma]} > 0 \quad \text{for all} \quad (u, t) \in [-1, 1] \times [0, T)
\]
Moreover, 
\[
\rho(\pm1, t) = \pm1 \quad \text{for all} \quad t \in [0, T).
\]
Thus, we see that
\[
\rho(\cdot, t): [-1, 1] \to [-1, 1]
\]
is a smooth orientation-preserving diffeomorphism. Let $\psi(\cdot, t) := \rho(\cdot, t)^{-1}$ and define $\widetilde{\gamma} (u, t) := \gamma (\psi(u, t), t)$. By the definition of $\psi$, we have
\begin{align}\label{eq:rhopsi}
u = \rho (\psi (u, t),  t).
\end{align}
This together with \eqref{eq:rho} implies that
\[
\int_{-1}^{\psi(u, t)} |\partial_\xi \gamma(\xi, t)| \, d\xi = \frac{u + 1}{2} L[\gamma].
\]
Differentiating this equality with respect to u, we obtain
\[
\partial_u \psi(u, t)|\partial_u \gamma(\psi(u, t), t)| = \frac{L[\gamma]}{2}.
\]
Since 
\[
\partial_u \tilde{\gamma} (u, t) = \partial_u \psi (u, t) \partial_u \gamma(\psi (u, t), t),
\]
and length is invariant under reparametrization, it follows that
\begin{align}\label{eq:constantspeed}
|\partial_u \widetilde{\gamma}| = \frac{L[\gamma]}{2} = \frac{L[\widetilde{\gamma}]}{2} \quad \text{for all} \quad t \in [0, T).
\end{align}
Thus, $\widetilde{\gamma} (\cdot, t)$ is the constant-speed reparametrization of $\gamma (\cdot, t)$. We next verify that the reparametrization preserves the parametrized initial curve. Since $\gamma (u, 0) = \gamma_0(u)$, it follows from \eqref{A1} and \eqref{eq:rho} that
\begin{align*}
  \rho(u, 0) = -1 + \frac{2}{L[\gamma_0]} \int_{-1}^u |\partial_\xi \gamma_0 (\xi)| \, d\xi = u.
\end{align*} 
In particular, this together with \eqref{eq:rhopsi} implies that $\psi (u, 0) = u$ for all $u \in [-1, 1]$. Consequently, $\widetilde{\gamma}(u, 0) = \gamma_0(u)$. Moreover, a reparametrization affects only the tangential component of the velocity. Hence, there exists a smooth function $\widetilde{\alpha}:[-1, 1] \times [0, T) \to \mathbb{R}$ such that
\[
\partial_t \widetilde{\gamma} = F_m[\tilde{k}] \widetilde{{\bm \nu}} + \widetilde{\alpha} \widetilde{{\bm \tau}},
\]  
where $\widetilde{k}$, $\widetilde{\bm \nu}$ and $\widetilde{\bm \tau}$ denote the curvature, the unit normal vector, and the unit tangential vector of $\widetilde{\gamma}$, respectively. 
Since $\psi(\pm1, t) = \pm1$ and $\psi (\cdot, t)$ preserves orientation, the conditions on the end points and contact angles of $\widetilde{\gamma}$ are preserved. Under an orientation-preserving reparametrization, the curvature and its arc-length derivatives transform by composition. Since $\psi(\pm1,t)=\pm1$, their vanishing boundary conditions are preserved. Hence, $\widetilde{\gamma}$ satisfies all the boundary conditions in \eqref{P}. Moreover, the constant-speed condition \eqref{eq:constantspeed}, together with the end point conditions
\[
\widetilde{\alpha}(\pm1,t)
=
-F_m[\tilde{k}](\pm1,t)\cot\theta_\pm,
\]
uniquely determines the tangential velocity. Therefore, the same calculation as in Section 3.1 yields
\[
\widetilde{\alpha} = G_m[\tilde{k}].
\]
Consequently, $\widetilde{\gamma}$ is a solution to \eqref{MP} with initial curve $\gamma_0$. We now prove uniqueness. Let $\gamma_1, \gamma_2: (-1, 1) \times [0, T) \to \R^2$ be two smooth regular solutions to \eqref{P} with the initial curve $\gamma_0$. By the preceding argument, there exist smooth families of orientation-preserving diffeomorphisms
\[
\psi_i (\cdot, t): [-1, 1] \to [-1, 1] \quad \text{for} \text \quad i \in \{1, 2\},
\] 
such that $\widetilde{\gamma}_i (u, t) := \gamma_i (\psi_i (u, t), t)$ is a solution to \eqref{MP} with initial curve $\gamma_0$. By the uniqueness of the solution to \eqref{MP}, we have $\widetilde{\gamma}_1 \equiv \widetilde{\gamma}_2$. Define $\chi (u, t) := \psi_2 (\psi_1^{-1} (u, t), t)$. Since both $\psi_1$ and $\psi_2$ are smooth orientation-preserving diffeomorphisms of $[-1, 1]$, we have $\chi (\pm1, t) = \pm1$ and
\[
\partial_u \chi (u, t) = \partial_u \psi_1^{-1}(u, t) \left[ \partial_u \psi_2 \right] (\psi_1^{-1}(u, t), t) > 0.
\]
Therefore, $\chi$ is a smooth orientation-preserving diffeomorphism of $[-1, 1]$. Moreover, since $\psi_i (u, 0) = u$ for $i \in \{1, 2\}$, we have $\chi (u, 0) = u$. Finally, $\widetilde{\gamma}_1 \equiv \widetilde{\gamma}_2$ implies 
\begin{align*}
\gamma_1 (u, t) &= \widetilde{\gamma}_1 (\psi_1^{-1}(u, t), t) = \widetilde{\gamma}_2 (\psi_1^{-1} (u, t), t)\\
&= \gamma_2 (\psi_2 (\psi_1^{-1}(u, t), t), t)= \gamma_2 (\chi (u, t), t).
\end{align*}
Thus, $\gamma_1$ and $\gamma_2$ differ only by a time-dependent reparametrization. Hence, the smooth regular solution to \eqref{P} with initial curve $\gamma_0$ is unique up to time-dependent orientation-preserving reparametrizations fixing the end points.
\end{proof}


\subsection*{Acknowledgment}
The author would like to thank Shinya Okabe for fruitful discussions. The author is also grateful to Yoshihito Kohsaka and Takashi Kagaya for valuable technical advice on proving the local-in-time solvability of the problem \eqref{MP}.

\end{document}